\newtheorem{theorem}{Theorem}
\newtheorem{lemma}{Lemma}
\def\theequation{\arabic{section}.\arabic{equation}}
\def\thefigure{\arabic{figure}}
\newcommand{\newatop}[2]{\genfrac{}{}{0pt}{}{#1}{#2}}
\newcommand{\puno}{{\pmb{+1}}}
\newcommand{\zero}{{\pmb{0}}}
\newcommand{\muno}{{\pmb{-1}}}
\newlength{\pecettawidth}
\begin{document}
\title{\normalsize\Large\bfseries Particle transport based study of nucleation in a ferromagnetic three-state spin system with conservative dynamics}

\author{Emilio N.M.~Cirillo}
\affiliation{Dipartimento di Scienze di Base e Applicate per l'Ingegneria, 
             Sapienza Universit\`a di Roma, 
             via A.\ Scarpa 16, I--00161, Roma, Italy.}
\email{emilio.cirillo@uniroma1.it}

\author{Vanessa~Jacquier}
\affiliation{Institute of Mathematics,
University of Utrecht, Budapestlaan 6, 3584 CD Utrecht, The~Netherlands.}
\email{v.jacquier@uu.nl} 

\author{Cristian~Spitoni}
\affiliation{Institute of Mathematics,
University of Utrecht, Budapestlaan 6, 3584 CD Utrecht, The~Netherlands.}
\email{C.Spitoni@uu.nl}

\date{\empty} 

\maketitle

\begin{abstract}
We pose the problem of metastability for a three--state spin system with conservative 
dynamics. We consider the Blume--Capel model with the Kawasaki dynamics, 
we prove that, in a particular region of the parameter plane, the metastable
state is the unique homogeneous minus state, and we estimate the exit 
time. 
To achieve our goal we have to solve several variational problems 
in the configuration space 
which result to be particularly involved, due to complicated structure 
of the trajectories. 
They key ingredient is the control 
of the energy differences between the configurations crossed when a spin is transported
from the boundary to an internal site  of the 
lattice through a completely arbitrary mixture of the three--state spin species.
To master these mechanisms we have introduced a new approach based on 
the transport of spins along nearest neighbor connected regions of the 
lattice with constant spin configuration. This novel approach goes beyond the Blume--Capel model
and can be used for the study of more general multi--state spin models.
\end{abstract}


\keywords{Kawasaki dynamics, multi--state systems, metastability, nucleation,
          low temperature dynamics} 


\preprint{Bozza: \today}  


\section{Introduction}
\label{s:int}
\par\noindent
Metastable states are very common in nature and are observed 
in several physical systems. 
Their mathematical description is a challenging task 
that, in the last decades, has given rise to different approaches and
has been attacked from different angles.
For a systematic discussion of the literature related 
to the so--called \emph{pathwise} and \emph{potential--theoretic} 
approaches
we refer to the monografies \cite{olivieri2005large, bovier2016metastability}
and to the review \cite{cjs2022}.
For the more recent 
\emph{trace method} we refer, for instance, 
to \cite{beltran2015martingale}.

The pertinent literature is characterized by the alternation of 
abstract studies \cite{bet2021effect, manzo2004essential}, in which general theories are proposed or 
refined, and applied ones in which the behavior of special
models is discussed. The latter, beside their interest due 
to specific novel applications, see, e.g., \cite{alonso1996three,den2003droplet} for the 3D Ising model, \cite{apollonio2022metastability, baldassarri2023metastability} for applications to non-square lattices, and \cite{baldassarri2023ising} for the Ising model on a family of finite
networks, play an important role in 
suggesting possible progresses of the general theories.  

This is a paper of the second type in which we investigate
the metastable behavior of the Blume--Capel lattice spin 
model \cite{blume1966theory,capel1966possibility}
under the 
Kawasaki dynamics \cite{kawasaki1966diffusion}
in finite volume and in the zero temperature limit.
The model is characterized by the fact that
the lattice spin variables 
can assume the three states 
minus, zero, and plus, and the Hamiltonian is ferromagnetic.
Moreover, 
the Kawasaki dynamics, reversible with respect to the 
Gibbs measure, allows in the bulk 
only the swaps between neighboring spins. 
This is a particularly challenging problem 
since it combines the difficulties due to the three state character 
of the spins and those due to the conservative character of the dynamics.

Before entering into the details of our work, we review 
briefly the papers in which the metastable behavior of the
Blume--Capel model with the Glauber 
non--conservative dynamics has been investigated.
The first results were published 
in \cite{cirillo1996metastability} 
in finite volume and in the zero-temperature limit. 
In \cite{cirillo2013relaxation,landim2016metastability,cirillo2017sum} 
a peculiar version of the model, for which there exist two non--degenerate 
in energy metastable states, is studied. 
The infinite volume regime is considered in 
\cite{manzo2001dynamical,landim2019metastability}. 
We mention, also, that in the interesting 
paper \cite{FGRN1994}, this model has been studied
at finite temperature with non--rigorous methods, such as 
Monte Carlo simulations and transfer matrix approaches.
Moreover, in different regimes, thanks to its great ductility,
the Blume--Capel model
and its generalizations have been recently used to study the 
pattern formation in three--components mixtures of 
polymers and solvent \cite{lcm2024NARWA,mkcmsmc2022pre,lmcm2023physicad}.
In all these studies the model is defined on 
a torus, namely, 
periodic boundary conditions are considered. 

In our recent work \cite{cirillo2024homogeneous} we 
attack this model
with zero boundary conditions and discuss the metastability 
scenario on the parameter plane, comparing it with what was already 
known for periodic boundary conditions. 
In particular, we prove that, 
depending on the choice of the parameters, the system can exhibit 
both homogeneous and heterogeneous nucleation triggered by the boundary. 
Beside its intrinsic interest,
\cite{cirillo2024homogeneous} is a sort of 
prequel of the present paper in which 
we study the Kawasaki case
where zero boundary conditions are natural
for modelling the exchange of particles 
between the system and the reservoir.

The rigorous study of metastability for a Kawasaki dynamics has been a
   challenging problem from the very beginning: the number of particles is
   indeed conserved in the interior of a finite  box, so that during the
   nucleation particles must travel between the droplet and the boundary of
   the box, which causes several mathematical complications.  A first
   simplified local 2D model for a Ising lattice gas was introduced
   in \cite{hollander2000metastability}, where only the particles inside a finite box
   were considered for the interaction, while the
   particles outside evolved  via non-interacting random walks. The
   removal of the interaction outside the finite box
   allowed the
   mathematical control of the gas of particles, and it was consistent with
   the physical picture of an ideal gas approximation of a low density gas in
   the limit in which the temperature tends to zero.

   The crucial point was indeed that at low density, say proportional to the 
   exponential of minus the inverse of the temperature, 
   the
   gas outside the finite box
   could be treated as a reservoir
   that creates particles with rate equal to the small density
   at sites on the
   interior boundary of the finite box
   and annihilates
   particles with rate one 
   at the external boundary sites.

   The local dynamics results in \cite{hollander2000metastability} for the Ising lattice gas were extended to the 3D case
   in \cite{den2003droplet}, with the sharp asymptotics  given in \cite{bovier2006sharp},
   and to the anisotropic Ising lattice gas in \cite{nardi2005anisotropy,baldassarri2022,baldassarri2021metastability,baldassarri2022critical}.

  Moreover, the case of volumes growing moderately fast as the temperature
   decreases was first studied in \cite{gaudilliere2009ideal} by using the pathwise approach,
   then in \cite{bovier2010homogeneous} with the use of the potential theoretic approach, and finally
   in \cite{gl2015annals} with the trace method. All these results have been derived for two
   spin classical lattice gases with Kawasaki dynamics.

To our knowledge, 
\cite{den2012metastability} is
the sole paper in which the metastable behavior of
a three state system with a conservative dynamics has been approached.
In that paper a swap
dynamics is considered for a model with three state site variables
(zero, one, and two) in which 
direct swaps between the states one and two are not allowed.
Moreover, the Hamiltonian is not ferromagnetic in the sense that
it promotes the interface between states one and two, with respect to all 
other possible bond configurations, which are all alike;
thus the single interface one--two is favored with respect 
to all the others. 
This yields in a chessboard--like stable 
state with the sites of the even and odd sublattices 
occupied, respectively, by state one and two or viceversa.

In the present paper, in contrast with 
\cite{den2012metastability}, 
we approach  
the Blume--Capel model with the Kawaski dynamics 
allowing swaps between spins of any value.
Moreover, the ferromagnetic interaction 
favors the three type of bonds between
sites sharing the same spin, whereas
the bonds between different spins are 
not alike, but pay different energy costs, indeed, the 
minus--plus bond costs four times the zero--minus and the zero--plus bonds.
This fact complicates the study of the model:
due to the interface energy cost hierarchy, 
plus structures cannot grow freely inside minus regions, but 
must always be protected by a thin layer of zeros. 
In other words, 
in order to 
perform changes of the system configuration in which a plus 
structure grows inside a minus background, before substituting 
bulk pluses with minuses, it is necessary to 
let zeros populate the bulk to avoid direct interfaces between 
minus and plus regions. Controlling these 
mechanisms becomes 
utterly difficult in the case of conservative 
dynamics, 
since the spins cannot be 
locally created, as it is the case for non--conservative ones,
but they must first enter the lattice at its 
boundary and then be transported 
through the bulk to the spot 
where they are needed.

The complications that arise are due to the necessity to 
control energy differences along path of configurations 
corresponding to the transport of one spin (either minus, zero, or plus)  
along the lattice through a completely general mixture 
of the three spin species.  
In this paper we manage these mechanisms by means 
of new techniques 
based on the idea that 
spins are transported through the lattice along 
regions made of nearest neighbor connected sites 
with constant spin value. It is important to stress 
that the energy cost associated to the transport of 
spins along the lattice is controlled in Lemma \ref{thm:grafo_pesato} 
which is 
susceptible of being 
extended to more general multi--spin models, 
such as a generalized Potts model with different 
interface energy costs, see Lemma \ref{thm:grafo_pesato_canale_Potts}.
Thus, the results discussed in this paper can open the way 
to new studies of conservative dynamics for general multi--state 
spin systems.

Coming back to this work, we focus on
a region of the parameter plane 
where we are able to prove the uniqueness of the metastable state 
(the homogeneous minus configuration) and 
to study the transition to the unique 
stable configuration (the homogeneous plus state), 
that is to say, to the absolute minimum of the energy.
This is achieved in the framework of the pathwise approach as 
formulated in \cite{manzo2004essential} by solving 
two model dependent problems:
call the \emph{stability level} of a configuration 
the minimal energy barrier that has to be overcome
by a path connecting such a configuration to any other 
at lower energy, then 
compute the stability level of the homogeneous minus configuration 
and show that the stability level of any other configuration 
is strictly smaller.
These two problems are addressed in the literature respectively as the 
optimal path and the recurrence problem. 

Due to the very intricated structure of the trajectories of our 
three--state conservative model, we solve a weaker version of 
the optimal path problem, that is to say, we do not compute exactly
the stability level of 
the homogeneous minus configuration, but we show that it  
belongs to a closed interval. As for the 
recurrence property, we show that the stability level of all 
other configurations is smaller than the infimum of this interval. 
This allows us to prove that the homogeneous minus configuration 
is the metastable state and to prove an estimate for the exit time 
from it with an uncertainty related to the width of the interval. 
We remark that, due to the uncertainty on the value of the 
stability level of the homogeneous minus configuration, it is natural 
to use the pathwise approach, since the other methods need a 
more detailed control of the energy landscape, i.e., of the minimizers
of the Dirichlet form associated to the dynamics.

The paper is organized as follows. 
In Section~\ref{s:mrs} we introduce the model and state our main results.
Section~\ref{s:pro-th} is devoted to the proof of the theorems which are based 
on the key Lemma \ref{thm:grafo_pesato} about the particle transport. 
The proofs of the more technical lemmas are reported in 
Section~\ref{s:pro-le}.
In Section~\ref{s:con}
we summarize 
our conclusions, propose a generalization of our 
key lemma, and discuss its potential applications to general 
multi--state spin systems. 

\section{Model and results}
\label{s:mrs}
\par\noindent
In this section we introduce the Blume--Capel model with the Kawasaki dynamics
and state our main results.

\subsection{The lattice}
\label{s:lat}
\par\noindent
We consider the set $\mathbb{Z}^2$ embedded in $\mathbb{R}^2$ 
and call \emph{sites} its elements.
We will denote by $\{e_1,e_2\}$ the canonical base of $\mathbb{R}^2$ 
and we will often address to the direction $1$ (resp.\ $2$) as the 
horizontal (resp.\ vertical) direction. 
Given two sites $x,x'\in\mathbb{Z}^2$ we let $|x-x'|$ be their 
Euclidian distance. 
Given $x\in\mathbb{Z}^2$, we say that 
$x'\in\mathbb{Z}^2$ is a \emph{nearest neighbor} 
of $x$ if and only if $|x-x'|=1$. 
Pairs of neighboring sites will be called \emph{bonds}.
A set $X\subset\mathbb{Z}^2$ is \emph{connected} if and 
only if for any $x\neq x'\in X$ there exists a sequence 
$x_1,x_2,\dots,x_n$ of sites of $X$ such that 
$x_1=x$, 
$x_n=x'$, and 
$x_k$ and $x_{k+1}$ are nearest neighbors for any $k=1,\dots,n-1$.

Given $X\subset \mathbb{Z}^2$ we call 
\emph{internal boundary} $\partial^-X$ of $X$
the set of sites in $X$ having a nearest neighbor outside 
$X$. 
The \emph{interior} or \emph{bulk} 
of $X$ is the set $X\setminus\partial^-X$, 
namely, the set of sites of $X$ having four nearest neighbors 
inside $X$. 
We call 
\emph{external boundary} $\partial^+X$ of $X$
the set of sites in $\mathbb{Z}^2\setminus X$ having 
a nearest neighbor inside $X$. 

A \emph{column}, resp.\ a \emph{row}, of $\mathbb{Z}^2$ is a 
sequence of $L$ connected sites of $\Lambda$ such that the line 
joining them is parallel to the vertical, resp.\ horizontal, axis.

A set $R\subset\mathbb{Z}^2$ is called a \emph{rectangle} 
(resp.~\emph{square}) of $\mathbb{Z}^2$ if the union of the 
closed unit squares of $\mathbb{R}^2$ centered in
the sites of $R$ with sides parallel 
to the axes of $\mathbb{Z}^2$ is a rectangle (resp.~a square) 
of $\mathbb{R}^2$.
The \emph{sides} of a rectangle of $\mathbb{Z}^2$ 
are the four maximal connected 
subsets of its internal boundary (note that they lie on straight 
lines parallel to the axes of $\mathbb{Z}^2$).
The \emph{length} of one side of a rectangle of $\mathbb{Z}^2$
is the number of sites 
belonging to the side itself. 
A \emph{quasi--square} is a rectangle with side lengths equal to $n$ and $n+1$,
with $n$ an integer greater than or equal to one.

We equip $\mathbb{Z}^2$ with a directed graph structure by considering 
the set of directed edges $(x \to y)$ where $\{x,y\}\subset \mathbb{Z}^2$ 
such that $|x-y|=1$. 

\subsection{The configuration space}
\label{model_configuration}
\par\noindent
Let $\Lambda:=\{0,...,L+1\}^2 \subset \mathbb{Z}^2$
be a finite square with fixed side length $L+2$
and denote its interior by 
$\Lambda_0:= \Lambda \setminus \partial^- \Lambda=\{1,...,L\}^2$.
Remark that $\partial^+\Lambda_0$ is not equal to 
$\partial^-\Lambda$, since the four corner sites of $\Lambda$
belong to 
$\partial^-\Lambda$, but not to 
$\partial^+\Lambda_0$.
We denote by
$E\subset\Lambda\times\Lambda$ 
the collection of all the directed edges $(x\to y)$ 
such that $x,y\in\Lambda$
and 
$E_0:=\{(x \to y) \in E | x,y\in\Lambda_0\}$ 
the set of oriented edges with both vertices in $\Lambda_0$. 

With each $x \in \Lambda$ we associate a spin variable 
$\sigma(x)\in\{-1,0,+1\}$. Moreover, we let 
$\mathcal{X}:=\{{-1,0,+1}\}^{\Lambda}$ be the 
\emph{configuration space} and 
$\hat{\mathcal{X}}:=\{\sigma\in\mathcal{X}\,|
               \,\sigma(x)=0\, \forall x\in\partial^-\Lambda \}$ 
be the set of configurations with spin $0$ associated to the 
sites in the internal boundary of $\Lambda$.

Given a configuration $\sigma\in\mathcal{X}$, if $\sigma(x)=0$ 
we say that the site is \emph{empty}, otherwise, 
if $\sigma(x)=+1$ (resp.\ $\sigma(x)=-1$),  
we say that it is \emph{occupied} by a particle 
with spin plus (resp.\ minus). 

Given a configuration $\sigma\in\mathcal{X}$ and a set 
$A \subseteq \Lambda_0$, we denote $\sigma_{|A}$ by $\sigma_A$. 
Finally, given $s\in\{-1,0,+1\}$ and $A \subseteq \Lambda_0$, we 
denote by $s_A$ the homogeneous configuration $\sigma \in \{-1,0,+1\}^A$ 
with spin equal to $s$ for any site in $A$. 
The symbols $\muno$, $\zero$, $\puno$ denote, respectively, 
the configurations with spins zero in $\partial^+ \Lambda_0$ 
and spins 
$-1$, $0$, and $+1$  
in $\Lambda_0$, namely the homogeneous configuration in 
the interior of $\Lambda$.

\subsection{Hamiltonian of the model and assumptions on its parameters}
\label{model_BCK}
\par\noindent
The Blume--capel Hamiltonian is defined in terms of three real parameters
$J>0$, $\lambda$, and $h$, respectively called \emph{coupling constant},
\emph{chemical potential}, and \emph{magnetic field}.
It is also convenient to set 
\begin{equation}
\label{eq:parameters_delta}
\Delta^p=4J-(\lambda+h)
\;\;\textrm{ and }\;\;
\Delta^m=4J-(\lambda-h)
.
\end{equation}
For any $s \in \{-1,+1\}$, we define 
\begin{equation}
    \Delta_s= \frac{1-s}{2} \Delta^m + \frac{1+s}{2} \Delta^p. 
\end{equation}
We now introduce a particular version of the 
Hamiltonian of the Blume--Capel model which is well suited to treat 
the particle exchange between the bulk and the boundary:
\begin{equation}
\label{def:hamiltonian_BCK}
H(\sigma)= H_0(\sigma)+\Delta^p{ \sum_{i\in \partial^+\Lambda_0}}
    {\mathbf{1}}_{\{\sigma(i)=+1\}}
+\Delta^m{\sum_{i\in \partial^+ \Lambda_0}}
    {\mathbf{1}}_{\{\sigma(i)=-1\}} 
\end{equation}
where
\begin{equation}
\label{def:hamiltonian_Glauber}
H_0(\sigma)
=
\frac{J}{2}\sum_{\newatop{i,j\in\Lambda_0:}{|i-j|=1}}[\sigma(i)-\sigma(j)]^2
+J\sum_{i\in\partial^-\Lambda_0}
 \sum_{\newatop{j\in\mathbb{Z}^2\setminus\Lambda_0:}{|i-j|=1}}
[\sigma(i)]^2
-\lambda\sum_{i\in\Lambda_0}\sigma(i)^2
-h\sum_{i\in\Lambda_0}\sigma(i) .
\end{equation}
Note that if in a configuration $\sigma$ a spin zero surrounded by 
zeros is changed to plus or minus the value of the energy 
$H_0$ increases precisely of $\Delta^p$ and $\Delta^m$, respectively.
Thus, these two quantities can be interpreted as the energy 
cost of a plus and a minus spin in the sea of zeros. 

We note that if $\sigma$ is a configuration 
such that the external boundary of $\Lambda_0$ is filled 
with zero spins, then $H(\sigma)=H_0(\sigma)$.
It is interesting to remark that
the function $H_0$ in  
\eqref{def:hamiltonian_Glauber} is precisely the Hamiltonian 
of the Blume--Capel model that has been used in 
\cite{cirillo2024homogeneous}
to study the Blume--Capel model with Glauber dynamics in the case of 
zero boundary conditions.
Here, we consider the Hamiltonian \eqref{def:hamiltonian_BCK} 
to take into account the energy cost of particles that must be created in 
the boundary $\partial^+\Lambda_0$ to run the Kawasaki dynamics
mimicking the presence of an external gas with fixed density of 
plus and minus spins. 

The equilibrium state of the Blume--Capel model with zero boundary 
condition on $\Lambda$ and at temperature $1/\beta$, with $\beta$ 
a positive real number, is described by the Gibbs measure 
\begin{equation}
\label{def:gibbs_measure}
    \mu_{\beta}(\sigma)=\frac{e^{-\beta H(\sigma)}}{Z_{\beta}},
\end{equation}
where $Z_{\beta}:=\sum_{\eta \in \mathcal{X}} e^{-\beta H(\eta)}$
is the partition function.

In the sequel we shall discuss the metastable behavior of 
the Blume--Capel model, but, to do so, 
we will rely on the following assumptions 
on the parameters\footnote{With the notation $0<a\ll b$ we mean
$0<a< cb$ for some positive constant $c>1$ that we are not 
interested to compute exactly.} (see, also, Figure~\ref{f:param}):

\begin{align}
\label{mod007a}
i)
&
\,\lambda>0 
\;\textup{ and }\;
\frac{\lambda}{2}<h<\min\Big\{\lambda,\frac{1}{3}J-\lambda\Big\}
\\ 
\label{mod007b}
ii)
&
\,
\lambda<\frac{4J(1-h/\lambda)}{h/\lambda(1+h/\lambda)}
 \Bigg[-\frac{2(1-2h/\lambda)}{h/\lambda}
       +\sqrt{\frac{64}{1+h/\lambda}-\frac{22}{h/\lambda}-6}\Bigg]^{-1}
\;\;\textup{ for }\;\;
\frac{h}{\lambda}>\frac{1}{3}(9-4\sqrt{3})
\\
\label{mod007c}
iii)
&
\,
L>\Big(\frac{2J}{\lambda-h}\Big)^3
\;\textup{ and }\; 
\frac{2J}{\lambda+h}, \,\frac{2J}{\lambda-h},\,\frac{2J+\lambda-h}{2h}
\;\;\textup{ are not integers}.
\end{align}

Condition \eqref{mod007a} implies that 
$\Delta^p$ and $\Delta^m$ are both positive and
will be also used in the proof of Theorem~\ref{thm:out_gammaBCK}.
Conditions \eqref{mod007b}--\eqref{mod007c}
are more technical, in particular \eqref{mod007b}
is related to the uncertainty on the computation
of the stability level of the configuration 
$\muno$, as mentioned already in the 
introduction (see, also, 
Theorem~\ref{thm:vmuno} and the way in which it is used 
in Section~\ref{sec:Identification}).
Note that the above conditions are not completely independent 
one from each other, indeed, in the case 
$\xi$ much larger than $(9-4\sqrt{3})/3$ the condition \eqref{mod007b}
implies the requirements assumed in \eqref{mod007a} on $h$.

\begin{figure}[t]
        \centering
        \includegraphics[scale=0.35]{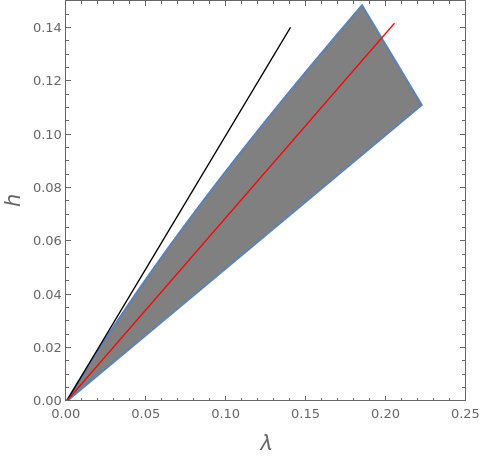}
        \caption{Region of the parameters space compatible with
assumptions \eqref{mod007a}--\eqref{mod007c} for $J=1$. The red and 
the black lines have, respectively, slope $(9-4\sqrt{3})/3$ and $1$.}
        \label{f:param}
    \end{figure}

We close this section by remarking that a simple computation yields that
the Hamiltonian \eqref{def:hamiltonian_BCK} can be rewritten as follows
\begin{equation}
\label{def:hamiltonian_BCK00}
H(\sigma)  =
-J\!\!\!\!
\sum_{(x \to y)\in E_0}
\!\!\!\!\!
    {\mathbf{1}}_{\{\sigma({x})\sigma({y})=+1\}}
+J\!\!\!\!
 \sum_{(x \to y)\in E_0} 
\!\!
    {\mathbf{1}}_{\{\sigma({x})\sigma({y})=-1\}} 
 +\Delta^p
\!\!\!\!
\!\!
{ \sum_{x\in\Lambda_0\cup\partial^+\Lambda_0}}
\!\!\!\!
    {\mathbf{1}}_{\{\sigma(x)=+1\}}
+\Delta^m
\!\!\!\!
\!\!
{\sum_{x\in\Lambda\cup\partial^+\Lambda_0}}
\!\!\!\!
    {\mathbf{1}}_{\{\sigma(x)=-1\}} 
.
\end{equation}
This is an interesting expression pointing out that
the interaction takes effect only inside $\Lambda_0$ and that the 
binding energy associated to a positive bond 
is $-J<0$ (respectively, $J>0$ for a negative bond).

\subsection{Energy landscape}
\label{s:lan}
\par\noindent
The energy difference (\emph{energy cost}) associated with each possible 
swap between two particles of different type plays a crucial role 
in the proof of several results.

Given $\sigma\in\mathcal{X}$, 
we well consider the following transformations:
we denote by $\sigma^{(x,y)}$ 
the configuration obtained by swapping the values of the spins 
at sites $x$ and $y$ in $\sigma$, 
by $\sigma^{(x;0)}$ the configuration obtained from $\sigma$ by 
replacing the value of the spin at site $x$ with $0$,
and 
by $\sigma^{(x;s)}$ the configuration obtained from $\sigma$ by replacing 
the value of the spin at site $x$ with $s \in \{-1,+1\}$,
more precisely, we set
\begin{equation}
\label{sigma_scambio}
\sigma^{(x,y)}(z)= \left\{
\begin{array}{ll}
\!\sigma(z) & \!\!\textrm{if } z \neq x,y\\
\!\sigma(y) & \!\!\textrm{if } z = x \\
\!\sigma(x) & \!\!\textrm{if } z = y, \\
\end{array}
\right.
\;\;
\sigma^{(x;0)}(z)= \left\{
\begin{array}{ll}
\!\sigma(z) & \!\!\textrm{if } z \neq x\\
\!  0 & \!\!\textrm{if } z = x,
\end{array}
\right.
\;\;
\sigma^{(x;s)}(z)= \left\{
\begin{array}{ll}
\!\sigma(z) & \!\!\textrm{if } z \neq x\\
\!   s & \!\!\textrm{if } z = x. 
\end{array}
\right.
\end{equation}

In order to express conveniently the energy differences, 
we introduce the following energy cost functions. 
Let $\sigma \in \mathcal{X}$ and $x\in \Lambda$, we set 
\begin{equation}\label{def:number_interfaces}
    D_x(\sigma):=\sum_{\substack{y \in \Lambda \\ |x-y|=1}} 
\textbf{1}_{\{\sigma(y)\sigma(x)=+1\}} \,  
    -\sum_{\substack{y \in \Lambda \\ |x-y|=1}} 
\textbf{1}_{\{\sigma(y)\sigma(x)=-1\}}
\;\;\textup { if } x\in\Lambda_0
\end{equation}
and 
$D_x(\sigma)=0$
if $x \in \partial^+\Lambda_0$, 
which makes sense 
since, 
according to \eqref{def:hamiltonian_BCK},
in $\partial^-\Lambda$
there is no interaction. 

We note that for $x\in\Lambda_0$, 
if $\sigma(x)=0$, then $D_x(\sigma)=0$, otherwise $D_x(\sigma)$ is 
the difference between the number of nearest neighbours of $x$ with 
spin equal to $\sigma(x)$ and the number of nearest neighbors with 
spin equal to $-\sigma(x)$. 

Furthermore, it is easy to see that 
the energy cost of the swap between the spins in $x$ and $y$ in $\Lambda$ 
is given by
\begin{equation}
\label{def:cost_swap}
    H(\sigma^{(x,y)})-H(\sigma)=2J \Big [ [D_x(\sigma)+D_y(\sigma)]-[D_x(\sigma^{(x,y)})+D_y(\sigma^{(x,y)})] \Big ].
\end{equation}

As we will see below, the homogeneous states $\muno$, $\zero$, and $\puno$ 
will be of basic importance in our study. We, thus, compute their 
energy. Recalling \eqref{def:hamiltonian_BCK}, it follows
\begin{equation}\label{H_0_+1_-1}
    H(\zero)=0, \;
    H(\muno)=
    -L^2 (\lambda-h)+4JL, \;
    H(\puno)=
    -L^2 (\lambda+h)+4JL,
\end{equation}
where we have reported only the terms proportional to $L^2$ 
omitting the terms proportional to $L$ and smaller. 

The \emph{ground states} of the system (or of the Hamiltonian)
are the configurations where 
the Hamiltonian 
\eqref{def:hamiltonian_BCK}
attains its absolute
minimum. We let $\mathcal{X}^\mathrm{s}$ be the set of ground states.

\begin{lemma}
\label{t:lan00-5}
Assume condition \eqref{mod007a} is satisfied, 
then 
the homogeneous state $\puno$
is the sole ground state of the 
system, namely $\mathcal{X}^\mathrm{s}=\{\muno\}$.
\end{lemma}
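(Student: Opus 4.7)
The plan is to prove $H(\sigma) > H(\puno)$ for every $\sigma \neq \puno$; the displayed set in the statement is evidently a typo and should read $\{\puno\}$, consistent with the sentence preceding it. The approach is a direct energy comparison, reducing the problem to an inequality inside $\Lambda_0$ which is then closed by a global combinatorial accounting.

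First, I unpack assumption \eqref{mod007a}: it gives $h > \lambda/2 > 0$, $\lambda \pm h > 0$, and $\lambda + h < \lambda + (\tfrac{1}{3}J - \lambda) = \tfrac{1}{3}J < J$, hence by \eqref{eq:parameters_delta} both $\Delta^p, \Delta^m > 0$. Splitting $H = H_0 + H_\partial$ as in \eqref{def:hamiltonian_BCK}, the boundary part
\[
H_\partial(\sigma) = \sum_{i\in\partial^+\Lambda_0}\bigl[\Delta^p \mathbf{1}_{\sigma(i)=+1} + \Delta^m \mathbf{1}_{\sigma(i)=-1}\bigr]
\]
is non-negative and vanishes iff $\sigma|_{\partial^+\Lambda_0} \equiv 0$, while $H_0$ does not depend on $\sigma|_{\partial^+\Lambda_0}$. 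Since $H_\partial(\puno)=0$, the lemma reduces to showing $H_0(\tau) > H_0(\puno)$ for every $\tau\colon\Lambda_0 \to \{-1,0,+1\}$ with $\tau \not\equiv +1$. Expanding \eqref{def:hamiltonian_Glauber} and using $\tau(x)^2 - 1 = -\mathbf{1}_{\tau(x)=0}$ and $\tau(x) - 1 = -\mathbf{1}_{\tau(x)=0}-2\mathbf{1}_{\tau(x)=-1}$ termwise, I would verify the identity
\[
H_0(\tau) - H_0(\puno) = \frac{J}{2}\!\!\!\!\sum_{\newatop{i,j\in\Lambda_0:}{|i-j|=1}}\!\![\tau(i)-\tau(j)]^2 + \sum_{x\in Z(\tau)}(\lambda + h - J c_x) + 2h\,|M(\tau)|,
\]
where $Z(\tau) = \tau^{-1}(0)$, $M(\tau) = \tau^{-1}(-1)$, and $c_x \in \{0,1,2\}$ counts the neighbors of $x$ outside $\Lambda_0$. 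The first and third summands are non-negative, and the third is strictly positive whenever $M \neq \emptyset$.

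The technical obstacle is the middle term: at any $x \in Z \cap \partial^-\Lambda_0$ the summand $\lambda + h - J c_x$ is negative, because $\lambda + h < J \leq J c_x$, so a site-by-site argument cannot succeed. I would resolve this by substituting the perimeter identity $\sum_{z\in Z} c_z = 4|Z| - 2 E_{ZZ} - E_{Z,PM}$ (with $E_{XY}$ the number of undirected $\Lambda_0$-bonds between regions of spin types $X$ and $Y$, $P = \tau^{-1}(+1)$), which turns the right-hand side into
\[
H_0(\tau) - H_0(\puno) = 2J(E_{ZZ} + E_{Z,PM}) + 4J E_{PM} - \Delta^p |Z| + 2h|M|.
\]
A case split then finishes the argument: if $Z$ contains a bulk site of $\Lambda_0$, each such site contributes the positive cost $\lambda + h$ (visible in the earlier form) and this dominates the boundary deficit $\Delta^p |Z \cap \partial^-\Lambda_0| \leq \Delta^p(4L-4)$; if $Z \subseteq \partial^-\Lambda_0$, the bond term $2J E_{Z,PM}$ counting bonds from $Z$ into the $\pm 1$ interior of $\Lambda_0$ compensates the missing $(J - \lambda - h)$ per boundary zero up to an error of order $L$ that is absorbed by the lower bound $L > (2J/(\lambda - h))^3$ of \eqref{mod007c}. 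I expect this final combinatorial/isoperimetric step to be the substantive part of the proof, while the identity and the reduction to $\Lambda_0$ are routine bookkeeping.
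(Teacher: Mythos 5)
Your reduction is sound, and up to the final paragraph it recovers exactly what the paper itself does: the paper's proof of this lemma consists of the single observation that the boundary term $\Delta^p\mathbf{1}_{\{\sigma(i)=+1\}}+\Delta^m\mathbf{1}_{\{\sigma(i)=-1\}}$ is nonnegative and vanishes on $\puno$ (using $\Delta^p,\Delta^m>0$, which follows from \eqref{mod007a} as you show), after which the statement that $\puno$ minimizes $H_0$ is \emph{quoted} from \cite[Lemma~2.2]{cirillo2024homogeneous} rather than reproved. You are right that the displayed set is a typo for $\{\puno\}$. Your two identities for $H_0(\tau)-H_0(\puno)$ are both correct (I checked the bond counting, the $4J$ per $(+,-)$ bond, and the perimeter identity $\sum_{z\in Z}c_z=4|Z|-2E_{ZZ}-E_{Z,PM}$), and you correctly diagnose that a site-by-site argument fails on $\partial^-\Lambda_0$.

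The gap is in the concluding case split, which is precisely the step you acknowledge to be the substantive one, and it does not close. In case (a) (``$Z$ contains a bulk site'') the claim that the bulk contributions of $\lambda+h$ per site dominate the boundary deficit is false: take $\tau$ equal to $0$ on $\partial^-\Lambda_0\cup\{(2,2)\}$ and $+1$ elsewhere in $\Lambda_0$. This falls into your case (a); the single bulk zero contributes $\lambda+h=O(1)$, while the boundary zeros contribute $\sum_{x}(\lambda+h-Jc_x)\approx -4L(J-\lambda-h)$, a deficit of order $JL$. The difference $H_0(\tau)-H_0(\puno)$ is indeed positive, but the positivity is carried entirely by the interface term $2J(E_{ZZ}+E_{Z,PM})\approx 8JL$, which your case-(a) argument never invokes; so the dichotomy ``bulk zero present / $Z\subseteq\partial^-\Lambda_0$'' is not the relevant one, and in both branches one must balance $2J(E_{ZZ}+E_{Z,PM})+4JE_{PM}$ against $\Delta^p|Z|$. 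In case (b) the phrase ``an error of order $L$ absorbed by $L>(2J/(\lambda-h))^3$'' also misdescribes the mechanism: for a small $Z\subseteq\partial^-\Lambda_0$ wrapping a corner the shortfall is $O(J)$ and independent of $L$, so enlarging $L$ buys nothing there; what closes that case is the per-site surplus $(\lambda+h)|Z|$ combined with corner bookkeeping for $E_{Z,PM}$ (the accumulated corner penalty reaches $-8J$ only when $Z$ contains all four corners, which forces $|Z|$ to be large enough for $(\lambda+h)|Z|$ to dominate). As it stands the proposal establishes the correct identities but not the inequality; either the missing isoperimetric argument must be supplied uniformly in terms of the bond counts, or, as the paper does, the $H_0$ statement should simply be imported from \cite{cirillo2024homogeneous}.
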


We say that a configuration $\eta\in\mathcal{X}$ 
is a \emph{local minimum} of the Hamiltonian if and only if 
for any $\eta'\neq\eta$ communicating with $\eta$ we have 
$H(\eta')>H(\eta)$.

\begin{lemma}
\label{t:lan000}
Assume \eqref{mod007a} and \eqref{mod007b} 
are satisfied.
The homogeneous states $\zero$ and $\muno$ are local minima of the system. 
\end{lemma}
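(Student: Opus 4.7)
The plan is to enumerate every $\eta'\ne\eta$ in communication with $\eta\in\{\muno,\zero\}$ and verify in each case that $H(\eta')>H(\eta)$. Throughout I work with the rewriting \eqref{def:hamiltonian_BCK00}, in which the only interaction contribution comes from edges of $E_0$ (whose endpoints both lie in $\Lambda_0$) and every $\pm 1$ spin at a site of $\Lambda_0\cup\partial^+\Lambda_0$ contributes a single--site energy cost $\Delta^p$ or $\Delta^m$. The elementary moves of the Kawasaki dynamics with reservoir boundary are the nearest--neighbour swaps $\sigma\to\sigma^{(x,y)}$ involving at least one site of $\Lambda_0$ and the single--site updates $\sigma\to\sigma^{(x;s)}$ at sites $x\in\partial^+\Lambda_0$ that model particle exchange with the reservoir.

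For $\eta=\zero$ every swap acts on two $0$'s and is therefore trivial, so only the updates $\zero^{(x;s)}$ with $x\in\partial^+\Lambda_0$ and $s\in\{+1,-1\}$ produce a genuinely new configuration. Because no edge of $E_0$ has an endpoint in $\partial^+\Lambda_0$, the energy change equals exactly the single--site cost $\Delta_s$, namely $\Delta^p$ for $s=+1$ and $\Delta^m$ for $s=-1$. Using the inequalities in \eqref{mod007a}, i.e.\ $h<\tfrac{1}{3}J-\lambda$ and $h<\lambda$, I would conclude $\lambda+h<J/3$ and $\lambda<J/3$, whence $\Delta^p,\Delta^m>4J-J/3>0$ by \eqref{eq:parameters_delta}.

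For $\eta=\muno$ I would distinguish three families of moves. Swaps of two neighbours both in $\Lambda_0$ involve two $-1$'s and are trivial. For a swap between $x\in\partial^-\Lambda_0$ (with $\muno(x)=-1$) and a neighbour $z\in\partial^+\Lambda_0$ (with $\muno(z)=0$), the two single--site $\Delta^m$ contributions cancel between $x$ and $z$ and the only net change is the destruction of the positive $E_0$--bonds incident to $x$; by \eqref{def:cost_swap} the energy increase is exactly $2JD_x(\muno)$, equal to $6J$ if $x$ is a non--corner site of $\partial^-\Lambda_0$ (three $-1$ neighbours in $\Lambda_0$) and to $4J$ when $x$ is a corner of $\Lambda_0$ (two $-1$ neighbours), both strictly positive since $J>0$. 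Finally, each update $\muno^{(x;s)}$ with $x\in\partial^+\Lambda_0$ and $s\in\{+1,-1\}$ changes the energy by exactly $\Delta_s>0$, as for $\zero$.

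The whole argument rests solely on the positivity of $J$, $\Delta^p$ and $\Delta^m$, which is guaranteed by \eqref{mod007a}; condition \eqref{mod007b} does not enter the local--minimum property and is retained in the hypotheses only for uniformity with the subsequent sharper analysis of the stability level of $\muno$ (see Theorem~\ref{thm:vmuno}). I expect no real obstacle beyond the routine bookkeeping that separates corner from non--corner sites of $\partial^-\Lambda_0$ in the boundary--swap step.
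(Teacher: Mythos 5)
Your proposal is correct and follows essentially the same route as the paper: enumerate the elementary Kawasaki moves from $\zero$ and $\muno$, observe that particle creation at $\partial^+\Lambda_0$ costs $\Delta^p$ or $\Delta^m>0$ under \eqref{mod007a}, and that extracting a minus from $\Lambda_0$ costs $2JD_x(\muno)\ge 4J$ (corner) or $6J$ (side). You are also right that \eqref{mod007b} plays no role here; the paper's proof does not use it either.
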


Based on the above lemma, we can expect that the homogeneous states $\muno$ and $\zero$ are potential metastable states.
 
\subsection{Local Kawasaki dynamics}
\label{Kdynamics}
\par\noindent
We consider a lattice dynamics 
similar to the local Kawasaki dynamics defined 
in \cite{hollander2000metastability} for one type of particles. 
In the bulk spins cannot be freely modified,
as it is the case for the Glauber dynamics, but 
only swaps of the spin values between neighboring sites are allowed. 
The total amount of pluses, minuses, and zeroes is not kept constant since
we let particle to enter the system at the boudary. 
Indeed, in the region $\partial^+\Lambda_0$, which is contained in 
the internal boundary $\partial^-\Lambda$ of 
$\Lambda$, the value of a spin can be changed from zero to plus or 
minus, and viceversa, 
mimicking in this way a swap with the exterior of $\Lambda$, 
namely, $\mathbb{Z}^2\setminus\Lambda$, 
which plays the role of an infinite reservoir. 

We denote by $E^+$ the set obtained by adding 
to $E$ the oriented pairs of neighboring sites such that 
one and only one is inside $\Lambda$.
In other words, 
\begin{equation}
E^+= E\cup\{(x \to y) \, | \, x\in\Lambda, y \in\partial^+\Lambda\} 
      \cup\{(x \to y) \, | \, x\in\partial^+\Lambda, y \in \Lambda\}.
\end{equation}
We say that $\sigma$ and $\eta$ are \emph{communicating configurations}, 
and we denote by $\sigma\sim\eta$, 
if there exists an edge $(x\to y)\in E^+$ such that $\eta$ may 
be obtained from $\sigma$ in any one of these ways:

\begin{itemize}
\item[--] for $(x \to y) \in E^+$, $\eta=\sigma$;
\item[--] for $(x \to y)\in E$, $\eta=\sigma^{(x,y)}$ is the configuration obtained from $\sigma$ by exchanging the spin values in the sites $x$ and $y$;
\item[--] for $(x \to y) \in E^+$ with $y\in \partial^+ \Lambda$, $\eta=\sigma^{(x;0)}$ represents the fact that a spin inside the internal-boundary $\partial^-\Lambda$ is set to zero; 
\item[--] for $(x \to y) \in E^+$ with $x\in \partial^+ \Lambda$ and $\sigma(y)=0$, $\eta=\sigma^{(y;s)}$ represents the fact that a spin inside the internal-boundary $\partial^-\Lambda$ is set to $s \in \{-1,+1\}$. 
\end{itemize}

The dynamics is modelled by the discrete time Markov chain $\sigma_t\in\mathcal{X}$, with $t\geq 0$, performing jumps among communicating configurations. Precisely, at time $t\geq 1$ choose at random an edge $(x \to y)\in E^+$ with uniform probability, then 
\begin{itemize}
\item[i)]
if $(x \to y)\in E$ then $\sigma_{t+1}=\sigma_t^{(x,y)}$ 
with probability $e^{-\beta[H(\sigma_t^{(x,y)})-H(\sigma_t)]_+}$, otherwise $\sigma_{t+1}=\sigma_t$; 
\item[ii)]
if $(x \to y)\not\in E$ and $x\in\Lambda$,
then $\sigma_{t+1}(x)=0$ and
$\sigma_{t+1}(z)=\sigma_t(z)$ for all $z\in\Lambda\setminus\{x\}$ with probability $1/2$, otherwise $\sigma_{t+1}=\sigma_t$;
\item[iii)]
if $(x \to y)\not\in E$ and $y\in\Lambda$ is such that $\sigma(y)=\pm 1$,
then $\sigma_{t+1}=\sigma_t$. Otherwise if $y\in\Lambda$ is such that $\sigma(y)=0$ we set 
\begin{equation}
\sigma_{t+1}(y)=
    \begin{cases}
    +1 & \, \text{ with probability } \frac{1}{2}e^{-\beta\Delta^p}, \\
    -1 & \, \text{ with probability } \frac{1}{2}e^{-\beta\Delta^m}, \\
    \sigma_{t}(y) & \, \text{ otherwise. }
    \end{cases}
\end{equation}
\end{itemize}
Moreover $\sigma_{t+1}(z)=\sigma_t(z)$ for all $z\in\Lambda\setminus\{y\}$.

Case i) corresponds to the swap of spins between neighboring sites in 
$\Lambda_0$. Cases ii) and iii) correspond to the exchange of spins at 
the boundary $\partial^-\Lambda$. In particular point ii) states that a 
spin $s\in\{-1,+1\}$ inside $\partial^-\Lambda$ is replaced by a spin 
zero and point iii) states that a spin $s\in\{-1,+1\}$ 
outside $\Lambda$ replaces a spin in $\partial^-\Lambda$.

We denote by $p_\beta(\sigma,\eta)$ the transition probability
associated with this Markov chain.
 
\begin{lemma}\label{lemma:reversibility}
The Markov chain defined above is reversible with respect to the Gibbs measure 
\eqref{def:gibbs_measure}, i.e., it satisfies
the detailed balance condition 
\begin{equation}\label{eq:detailed_balance}
\mu_\beta(\sigma)p_\beta(\sigma,\eta)
=\mu_\beta(\eta)p_\beta(\eta,\sigma)
\end{equation}
for $\sigma,\eta\in\mathcal{X}$.
\end{lemma}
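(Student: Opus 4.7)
The plan is to verify the detailed balance condition \eqref{eq:detailed_balance} by a case analysis on the type of communicating pair $(\sigma,\eta)$. If $\sigma = \eta$, \eqref{eq:detailed_balance} is trivial, and if $\sigma \not\sim \eta$ both sides vanish by construction of $p_\beta$. It therefore suffices to treat the three elementary moves from the definition of the chain: a bulk swap corresponding to an edge $(x\to y)\in E$ (case i), an annihilation move $\eta = \sigma^{(x;0)}$ with $x\in\partial^+\Lambda_0$ (case ii), and a creation move $\eta = \sigma^{(y;s)}$ with $y\in\partial^+\Lambda_0$ and $s\in\{-1,+1\}$ (case iii). Since the second and third moves are inverse to each other (the creation from $\eta$ reproduces $\sigma$ when $\sigma(y)=s$ at the boundary site), the argument really reduces to two subcases.

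For the bulk swap with $\eta=\sigma^{(x,y)}$, the transition probability is $p_\beta(\sigma,\eta) = |E^+|^{-1}e^{-\beta[H(\eta)-H(\sigma)]_+}$, with the analogous formula for the reverse direction. Using the elementary identity $H(\sigma) + [H(\eta)-H(\sigma)]_+ = \max\{H(\sigma),H(\eta)\} = H(\eta) + [H(\sigma)-H(\eta)]_+$, one immediately gets
\[
e^{-\beta H(\sigma)}\,e^{-\beta[H(\eta)-H(\sigma)]_+} = e^{-\beta H(\eta)}\,e^{-\beta[H(\sigma)-H(\eta)]_+},
\]
which is exactly $\mu_\beta(\sigma)p_\beta(\sigma,\eta)=\mu_\beta(\eta)p_\beta(\eta,\sigma)$ after dividing by $Z_\beta|E^+|$. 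The two orientations $(x\to y)$ and $(y\to x)$ of the edge contribute identically on both sides, so the combinatorial prefactors match automatically.

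For the boundary moves, fix $x\in\partial^+\Lambda_0$, let $s=\sigma(x)\in\{-1,+1\}$ and $\eta=\sigma^{(x;0)}$. The key observation is that by inspection of \eqref{def:hamiltonian_BCK} the site $x$, lying outside $\Lambda_0$, contributes to $H$ only through the boundary field term, so that $H(\sigma)-H(\eta)=\Delta_s$. The probability of the move $\sigma\to\eta$ equals $|E^+|^{-1}\,N_x\,\tfrac{1}{2}$, where $N_x$ is the number of edges $(x\to y)\in E^+\setminus E$ joining $x$ to $\partial^+\Lambda$; the reverse move $\eta\to\sigma$ uses case iii and has probability $|E^+|^{-1}\,N_x\,\tfrac{1}{2}\,e^{-\beta\Delta_s}$, with the same $N_x$ because $E^+$ includes both orientations of every cross-boundary pair. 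Multiplying respectively by $\mu_\beta(\sigma)=Z_\beta^{-1}e^{-\beta H(\sigma)}$ and $\mu_\beta(\eta)=Z_\beta^{-1}e^{-\beta H(\eta)}$ and using $H(\sigma)=H(\eta)+\Delta_s$ yields equality of the two sides.

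There is no genuinely hard step: the only substantive point is that the Hamiltonian \eqref{def:hamiltonian_BCK} was designed precisely so that the energetic cost of populating a site of $\partial^+\Lambda_0$ with a spin $s$ matches the exponential weight $e^{-\beta\Delta_s}$ appearing in the creation rate of case iii, and that cases ii and iii are mutually reverse with matching combinatorial factors. The only potential obstacle is careful bookkeeping of the edge count $N_x$ and of the factor $1/2$ appearing in both cases ii and iii, which is resolved by the symmetric definition of $E^+$.
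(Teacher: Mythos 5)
Your proposal is correct and follows essentially the same route as the paper: the bulk swap is handled by the standard Metropolis identity, and the boundary creation/annihilation moves are matched pairwise using $H(\sigma)-H(\sigma^{(x;0)})=\Delta_s$ together with the factor $\tfrac12 e^{-\beta\Delta_s}$ in the creation rate. The paper simply spells out the four sign subcases ($\pm1\leftrightarrow 0$) that you unify via $\Delta_s$, and, like you, notes that $\pm1\leftrightarrow\mp1$ boundary pairs are not communicating.
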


\subsection{Paths, energy costs, metastable states}
\label{s:pat}
\par\noindent
A sequence of configurations 
$\underline{\omega})=(\omega_1,\omega_2,\dots,\omega_n)$ such that 
$\omega_i$ and $\omega_{i+1}$ are communicating for any $i=1,2,\dots,n-1$ 
is called a \emph{path of length} $n$.
A path $(\omega_1,\dots,\omega_n)$ is called \emph{downhill}
(resp.~\emph{uphill}) if and only if 
$H(\omega_i)\ge H(\omega_{i+1})$ 
(resp.~$H(\omega_i)\le H(\omega_{i+1})$) 
for any $i=1,2,\dots,n-1$.
In particular, a path $(\omega_1,\dots,\omega_n)$ is called \emph{two-steps downhill}
 if and only if $H(\omega_i)\ge H(\omega_{i+2}) \ge H(\omega_{i+1})$ 
for any $i=1,2,\dots,n-2$.
Given two configurations $\eta,\eta'\in\mathcal{X}$, the set of 
paths with first configuration $\eta$ and last configurations $\eta'$ 
 is denoted by $\Omega(\eta,\eta')$.

Given a path $\underline{\omega}=(\omega_1,\dots,\omega_n)$, its \emph{height} 
$\Phi(\underline{\omega})$ is the maximal energy reached by the 
configurations of the path, more precisely,
\begin{equation}
\label{pat000}
\Phi(\underline{\omega})
:=\max_{i=1,\dots,n}H(\omega_i)
\;.
\end{equation}
Given two configurations $\eta,\eta'$, the \emph{communication height}
between $\eta$ and $\eta'$ is defined as 
\begin{equation}
\label{pat005}
\Phi(\eta,\eta')
:=
\min_{\underline{\omega}\in\Omega(\eta,\eta')}\Phi(\underline{\omega})
\;.
\end{equation}
Any path $\underline{\omega}\in\Omega(\eta,\eta')$ such that 
$\Phi(\underline{\omega})=\Phi(\eta,\eta')$ is called \emph{optimal} 
for $\eta$ and $\eta'$.

Let $\mathcal{I}_{\sigma}$ be the set of configurations with energy strictly lower than $H(\eta)$. The \emph{stability level} of
a configuration $\eta \in \mathcal{X}$ is

\begin{equation}
V_{\eta}:=\Phi(\sigma,\mathcal{I}_{\eta})-H(\eta),
\end{equation}
If $\mathcal{I}_{\sigma}$ is empty, then we define $V_{\sigma}=\infty$. 
Given a real number $a$, we let $\mathcal{X}_a$ be the set of the 
configuration with stability level larger than $a$.

The \emph{metastable states} are defined as the configurations, different from 
the ground states,
such that their stability level is maximal.

\subsection{Main results}
\label{section:main_results}
\par\noindent
In this section, we present the main results of the model for 
the region of the parameter space specified by the 
conditions
\eqref{mod007a}--\eqref{mod007c} 
that are assumed to be satisfied.
We denote by $\mathbb{P}_\sigma$ and $\mathbb{E}_\sigma$ the 
probability measure induced by the Markov chain started 
at $\sigma$ and the associated expectation.

In the next theorems we will 
compute 
estimates for the maximal stability level of all the configurations
of the model and we will 
identify the metastable state.
Some key estimates will be given in terms of the energies of 
two peculiar configurations $\sigma_c$ and $\sigma_s$, 
see Figure \ref{fig:critical_configuration}.
The precise description of these configurations is given in the 
caption of the figure where 
the definition of the critical length
\begin{equation}
\label{ellec}
l_c= \Big\lfloor\frac{2J+\lambda-h}{2h} \Big\rfloor+1
\end{equation}\label{eq:energy_saddle}
will be used. The energies of the two configuration are given by
\begin{align}
    &H(\sigma_c)=H(\muno)+2J(2l_c-1)-(\lambda+h)l_c(l_c-1)
     +(\lambda-h)(l_c(l_c-1)+(2l_c-1)) \notag \\
    &H(\sigma_s)= H(\sigma_c)+6J-2(\lambda+h)+2(\lambda-h). 
\end{align}

\begin{figure}[t]
\begin{center}
    \includegraphics[scale=0.4]{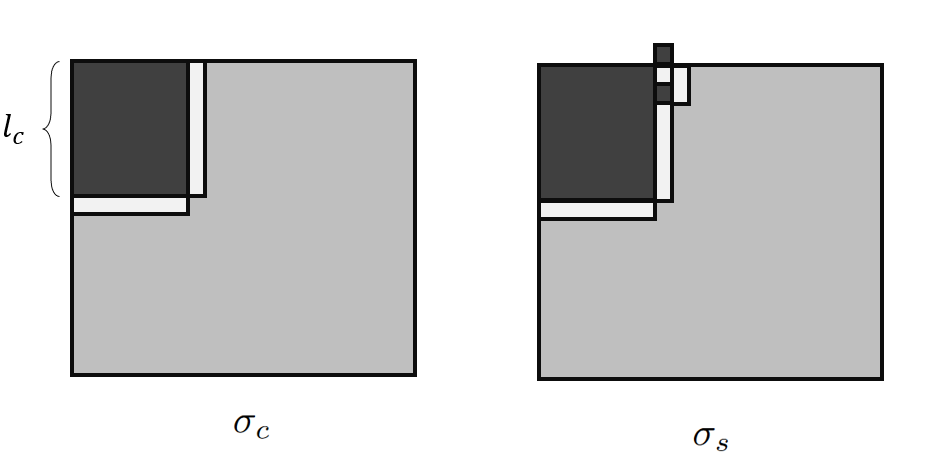}
    \caption{In the picture, white, dark gray and light gray represent the zero, plus and minus regions respectively. On the left, the configuration $\sigma_c$. 
    On the right, the configuration $\sigma_s$ that contains two more pluses and two less minuses than $\sigma_c$. We note that the cluster of pluses can be attached in one of the four corners of $\Lambda$ and the protuberance can be attached along one of the two sides of the cluster of pluses.}
\label{fig:critical_configuration}
\end{center}
    \end{figure}

The first result is an estimate of the stability level of 
all the configurations of 
$\mathcal{X}$ different from $\{\muno,\puno\}$.
This result suggests that 
$\muno$ is the unique metastable state in the region of the parameter
plane under consideration. 

\begin{theorem}
\label{thm:out_gammaBCK}
Let $\eta \in \mathcal{X}$ be a configuration such that 
$\eta \not \in \{\muno,\puno\}$, then 
\begin{equation}
\label{vzero}
V_\eta
\le
6J-2(\lambda+h)+
\frac{4J^2}{\lambda+h} 
=:V^*
.
\end{equation}
\end{theorem}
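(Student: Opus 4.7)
The plan is to exhibit, for every $\eta\in\mathcal{X}\setminus\{\muno,\puno\}$, a path $\underline{\omega}\in\Omega(\eta,\mathcal{I}_\eta)$ such that $\Phi(\underline{\omega})-H(\eta)\le V^*$; by the definition (\ref{pat005}) this yields $V_\eta\le V^*$. Under condition (\ref{mod007a}), Lemma~\ref{t:lan00-5} guarantees that $\puno$ is the unique ground state, so $\mathcal{I}_\eta\neq\emptyset$ whenever $\eta\neq\puno$, and the only question is how high a path into $\mathcal{I}_\eta$ must climb.

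I would first dispose of the configurations $\eta$ that are not local minima of $H$: for these, a single communication to some $\eta'\sim\eta$ already yields $H(\eta')<H(\eta)$ via (\ref{def:cost_swap}), hence $V_\eta=0$. The interesting case is when $\eta$ is a local minimum different from $\muno$ and $\puno$. For such $\eta$ the idea is to combine two elementary mechanisms. Mechanism (i) is an \emph{attachment} step in which a $2\times 1$ plus protuberance is formed at a favorable location -- either appended to an existing plus cluster of $\eta$, or nucleated at $\partial^+\Lambda_0$ -- whose height cost is at most $6J-2(\lambda+h)$ above $H(\eta)$; this number is precisely the energy of a minimal plus protuberance in a sea of zeros (each plus--zero bond costs $J$, each plus spin gains $\lambda+h$), see (\ref{def:hamiltonian_BCK00}). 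Mechanism (ii) is a \emph{transport} step that brings further plus particles from $\partial^+\Lambda_0$ through a shielding zero channel to the growing cluster, driving it past the critical size. The height contribution of (ii) is bounded by $4J^2/(\lambda+h)$, which is the maximum of $4Jl-(\lambda+h)l^2$ attained at the critical square side $l^*=2J/(\lambda+h)$; summing (i) and (ii) gives exactly $V^*$.

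The crucial technical ingredient for the transport step is Lemma~\ref{thm:grafo_pesato}, which bounds the energy cost of moving a single spin along a nearest--neighbor connected region of constant spin value. Its role is to control the height of each segment of the transport, so that by routing incoming pluses through blocks of zeros one avoids creating the forbidden $\muno$--$\puno$ interface of bond cost $4J$; the shielding zero collar around the growing plus cluster is maintained throughout, and each plus particle is transported block by block along regions of constant spin.

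The main obstacle is the case analysis on the local minima: in a three--state conservative setting these are considerably richer than in the Blume--Capel Glauber counterpart of \cite{cirillo2024homogeneous}, and may include plus rectangles with zero collars of varying width, plus--zero ladders abutting $\partial^+\Lambda_0$, and nontrivial coexistence patterns of minuses and zeros (in particular the configuration $\zero$, for which the whole critical droplet has to be nucleated from scratch). For each combinatorial type one must identify the correct insertion site for the protuberance of step (i) and the correct zero channel for the transport of step (ii); once these choices are made, iterated applications of Lemma~\ref{thm:grafo_pesato} together with the interface bookkeeping provided by (\ref{def:hamiltonian_BCK00}) deliver the uniform bound $V_\eta\le V^*$.
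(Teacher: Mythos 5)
Your plan captures correctly where the number $V^*$ comes from: it is the height of the nucleation path from $\zero$ to $\puno$, with $4J^2/(\lambda+h)$ the energy of the critical plus square relative to the zero background and $6J-2(\lambda+h)$ the extra cost of the protuberance step (this is Lemma~\ref{lemma:path_0_+} in the paper). You also correctly dispose of non--local--minima and of configurations with nonzero spins in $\partial^+\Lambda_0$. The gap is in the claim that the same two mechanisms (attach a protuberance, then transport pluses through a zero channel) give a \emph{uniform} bound $V_\eta\le V^*$ for every other local minimum. That is not how the bound is obtained, and as stated the argument would fail: for a configuration containing, say, a subcritical plus strip inside a minus background, or a plus cluster whose zero collar does not reach $\partial^+\Lambda_0$, "growing a plus droplet" either creates $(+,-)$ bonds of cost $4J$ or has no shielding channel available, and the energy bookkeeping of your mechanisms (i)--(ii) relative to $H(\eta)$ does not close. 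The paper's proof does something structurally different: it shows that $\zero$ is essentially the \emph{only} configuration whose stability level is as large as $V^*$, and that every other $\eta\notin\{\muno,\puno\}$ has $V_\eta$ bounded by explicit constants of order $J$ (namely $14J$, $16J$, $22J$, $54J$, $58J$, all below $V^*$ under conditions \eqref{mod007a}--\eqref{mod007b}). Those bounds come from a case analysis keyed on the spin at the corner $x_0=(1,L)$ and on the maximal monochromatic rectangle there, invoking a battery of auxiliary lemmas whose mechanisms are mostly \emph{not} droplet growth: shrinking a cluster with a short convex side (Lemma~\ref{lem:shrink_rectangle}), filling a long convex side (Lemma~\ref{lem:grow_rectangle}), exploiting an existing $(+,-)$ bond (Lemmas~\ref{lem:bond_pm}, \ref{lem:bond_pm_different_phase}), dismantling flag--shaped structures (Lemma~\ref{lem:flag}), and transporting through $s$--carpets with $s\neq 0$ (Lemma~\ref{lemma:transport_scarpet}).

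A second, quantitative omission: the transport of a particle through a carpet is not free. Lemma~\ref{thm:grafo_pesato} and the transport lemmas charge an overhead of up to $8J$, $18J$, or $54J$ per particle moved, and these terms must be shown to stay below $V^*$ (they do not simply cancel against the droplet energetics). Your proposal treats the transport height as if it were absorbed into $4J^2/(\lambda+h)$; that works for the path from $\zero$ (where a lone plus moving through zeros costs nothing), but not in general. So the proposal identifies the right extremal configuration and the right key lemma, but the case analysis you defer to "the main obstacle" is the actual content of the theorem, and the uniform growth mechanism you propose in its place is not a valid substitute for it.
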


An immediate consequence of Theorem \ref{thm:out_gammaBCK} is the
recurrence of the system to the set $\{\muno, \puno\}$. 
Indeed, 
by applying \cite[Theorem 3.1]{manzo2004essential} 
we have 
that for any $\epsilon>0$ 
the function
\begin{align}
\label{eq:recurrence_BCK}
\beta\to
\sup_{\sigma \in \mathcal{X}} 
\mathbb{P}_{\sigma}(\tau_{\{\muno,\puno\}}> e^{\beta(V^*+\epsilon)})
\end{align}
is super--exponentially small\footnote{We say that a 
function $x\mapsto f(x)$ is super exponentially small 
for $x\to\infty$ 
if $\lim_{x\to\infty}(\log f(x))/x=-\infty.$}
for $\beta\to\infty$,
where $\tau_{\{\muno,\puno\}}$ is the first hitting time to 
$\{\muno,\puno\}$ 
of the chain started at $\sigma$.

The equation \eqref{eq:recurrence_BCK} implies that the system 
reaches with high probability either the state $\muno$ (which is a 
local minimizer of the Hamiltonian) or the  ground state in a 
time shorter than 
$e^{\beta (V^*+\epsilon)}$, uniformly in the starting configuration 
$\sigma$ for any $\epsilon >0$. In other words we can say that 
the dynamics speeded up by a time factor of order $e^{\beta V^*}$ 
reaches with high probability $\{ \muno, \puno \}$. 

\begin{theorem}[Stability level of $\muno$]
\label{thm:vmuno} 
The stability level $V_\muno$ of $\muno$ is such that 
$$H(\sigma_c)+\Delta^p-H(\muno) \leq V_\muno \leq H(\sigma_s)-H(\muno).$$
\end{theorem}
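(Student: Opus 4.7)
My plan is to establish the upper and lower bounds separately. Both rely on the particle-transport Lemma~\ref{thm:grafo_pesato} to control energies along candidate paths.

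\textbf{Upper bound.} To prove $V_\muno \leq H(\sigma_s) - H(\muno)$, I would exhibit an explicit reference path $\underline{\omega}\in\Omega(\muno,\puno)$ of height $H(\sigma_s)$. By Lemma~\ref{t:lan00-5} and \eqref{H_0_+1_-1} one has $H(\puno)<H(\muno)$, so $\puno\in\mathcal{I}_\muno$ and the estimate follows. The path is built in four stages: (i) inject zeros at sites of $\partial^+\Lambda_0$ via the boundary mechanism (iii) of Section~\ref{Kdynamics} and transport them to a chosen corner of $\Lambda_0$ by Kawasaki swaps; (ii) alternately inject pluses and zeros at $\partial^+\Lambda_0$ and use Lemma~\ref{thm:grafo_pesato} to transport them to the corner, thereby growing row-by-row a quasi-square cluster of pluses shielded by a one-site-thick layer of zeros, up to the critical configuration $\sigma_c$; (iii) attach a two-site plus protuberance along one side of the quasi-square, reaching the unique top of the path at $\sigma_s$; (iv) from $\sigma_s$ let the droplet grow row-by-row down to $\puno$ in a two-steps-downhill manner. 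The bookkeeping to be done is that the energy along stages (i)--(ii) stays bounded by $H(\sigma_c)<H(\sigma_s)$ and along stage (iv) by $H(\sigma_s)$; this is a direct calculation using \eqref{def:cost_swap}, \eqref{H_0_+1_-1}, \eqref{eq:energy_saddle} together with the transport bound of Lemma~\ref{thm:grafo_pesato}.

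\textbf{Lower bound.} To prove $V_\muno \geq H(\sigma_c)+\Delta^p-H(\muno)$, I would show that every $\underline{\omega}\in\Omega(\muno,\mathcal{I}_\muno)$ attains height at least $H(\sigma_c)+\Delta^p$. The key ingredient is an isoperimetric-type statement: among all configurations with at most $l_c(l_c-1)$ pluses, the minimum-energy shape is a shielded quasi-square cluster, whose energy equals $H(\sigma_c)$ when the plus count is exactly $l_c(l_c-1)$. A standard estimate based on the Hamiltonian decomposition \eqref{def:hamiltonian_BCK00} and on the magnetic-field gain per plus shows that any $\eta\in\mathcal{I}_\muno$ must contain strictly more than $l_c(l_c-1)$ pluses, so the path must pass through a configuration with exactly $l_c(l_c-1)$ pluses, hence of energy $\ge H(\sigma_c)$. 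Since the dynamics is conservative in $\Lambda_0$, the next plus added by the path cannot be created in the bulk and must appear in $\partial^-\Lambda$ through mechanism (iii) of Section~\ref{Kdynamics}, paying the cost $\Delta^p$ (see \eqref{def:hamiltonian_BCK}). The height of the path is therefore at least $H(\sigma_c)+\Delta^p$, as required.

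\textbf{Main obstacle.} The isoperimetric identification used in the lower bound is the delicate part of the argument. Because the three-state nature of the model allows a plus cluster to be surrounded by arbitrary interleavings of zeros and minuses, it is not immediate that the shielded quasi-square minimizes the energy among configurations with a fixed plus count; alternative dressings (irregular cluster shapes, partial zero shields, or isolated plus fragments) must all be ruled out. This is precisely the role of Lemma~\ref{thm:grafo_pesato}: it controls uniformly the cost of transporting a spin across an arbitrary mixture of the three species, and so closes the variational problem by forcing every admissible trajectory to cross a shielded quasi-square configuration at the moment the plus count first reaches the critical value $l_c(l_c-1)$.
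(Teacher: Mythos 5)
Your proposal follows essentially the same route as the paper for both bounds: an explicit corner reference path whose height is $H(\sigma_s)$ for the upper bound, and, for the lower bound, the combination of (a) the fact that any path must contain two \emph{consecutive} configurations $\sigma\in\mathcal{M}_{n^+_c}$ and $\eta\in\mathcal{M}_{n^+_c+1}$ with $H(\eta)=H(\sigma)+\Delta^p$, because pluses can only be created on $\partial^-\Lambda$ where there is no interaction, with (b) the minimality of $H(\sigma_c)$ on the manifold $\mathcal{M}_{n^+_c}$ (Lemma~\ref{lem:minmax_K}, imported from \cite{cirillo2024homogeneous}). Two points in your write-up are inaccurate, though neither destroys the result.

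First, in the upper bound the claim that the energy along stages (i)--(ii) stays bounded by $H(\sigma_c)$ is false. Each growth step of the subcritical frame from $\sigma_{l,l'}$ has a local saddle at $H(\sigma_{l,l'})+6J-2(\lambda+h)+2(\lambda-h)$; already for the step from $\sigma_{l_c-1,l_c-1}$ to $\sigma_c=\sigma_{l_c-1,l_c}$ this exceeds $H(\sigma_c)$, since by \eqref{eq:energy_chopped_corner_frame} and the definition of $l_c$ one has $H(\sigma_c)-H(\sigma_{l_c-1,l_c-1})=2J+\lambda+h-2hl_c<2h<6J-4h$ under \eqref{mod007a}. What is true, and all that is needed, is that these local saddles are maximized at $(l,l')=(l_c-1,l_c)$, where the saddle equals $H(\sigma_s)$; this is the computation the paper actually performs. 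Second, your ``main obstacle'' paragraph misattributes the isoperimetric step: the identity $H(\sigma_c)=\min_{\xi\in\mathcal{M}_{n^+_c}}H(\xi)$ is a purely static variational statement about the Hamiltonian, proved in \cite{cirillo2024homogeneous} for the Glauber setting and valid here precisely because it does not involve the dynamics; Lemma~\ref{thm:grafo_pesato} plays no role in it (its role is in realizing the reference path and in the recurrence Theorem~\ref{thm:out_gammaBCK}). Relatedly, the path is not ``forced to cross a shielded quasi-square'': it is only forced to cross \emph{some} configuration of $\mathcal{M}_{n^+_c}$, whose energy is then bounded below by $H(\sigma_c)$. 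On the other hand, your explicit reduction to $\Omega(\muno,\mathcal{I}_\muno)$ via the claim that every $\eta\in\mathcal{I}_\muno$ has more than $n^+_c$ pluses is a step the paper leaves implicit (it argues on paths to $\puno$); that claim does require proof and rests on the same variational input, so you are right to flag it, but it is not ``standard'' enough to dismiss in one line.
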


\noindent
We remark that the upper bound of $V_{\muno}$ can be estimated as follows:
\begin{align} 
& H(\sigma_s)-H(\muno)=2J(2l_c+1)+\lambda (2l_c-1)-h(2l_c^2+3) < \frac{2J^2}{h}+\frac{2J \lambda}{h}+4J+\frac{\lambda ^2}{2h}-\frac{7}{2}h -\lambda.
\end{align}
Moreover, by using the explicit expression of the energy of the two configurations $\sigma_s$ and $\sigma_c$, see the equation \eqref{eq:energy_saddle} below, the difference between the upper and the lower bound is
\begin{align}
   [H(\sigma_s)-H(\muno)]-[H(\sigma_c)+\Delta^p-H(\muno)]=2J+\lambda-h.
\end{align}
Finally, we are now able to identify the metastable states of the system. Indeed, since the lower bound in Theorem \ref{thm:vmuno} is strictly greater than $V^*$ and since all the other configurations have stability level smaller than $V^*$, as stated in Theorem \ref{thm:out_gammaBCK}, we have that $\muno$ is the configuration with maximal stability level.

\begin{theorem}[Identification of the metastable state]
\label{thm:Identification} 
The unique metastable state is $\muno$.
\end{theorem}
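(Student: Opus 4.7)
The plan is to combine Theorem~\ref{thm:out_gammaBCK} and Theorem~\ref{thm:vmuno} into a single strict inequality that isolates $\muno$ as the unique configuration of maximal stability level outside the set of ground states. Since by Lemma~\ref{t:lan00-5} the unique ground state is $\puno$, the configuration $\muno$ is a legitimate candidate for being metastable, and by the very definition of metastability it is enough to establish that
$$V_\muno \;>\; V_\eta \quad\text{for every } \eta\in\mathcal{X}\setminus\{\muno,\puno\}.$$

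By Theorem~\ref{thm:out_gammaBCK}, the right-hand side is bounded above by $V^{*}=6J-2(\lambda+h)+4J^2/(\lambda+h)$, while by Theorem~\ref{thm:vmuno} the left-hand side is bounded below by $H(\sigma_c)+\Delta^p-H(\muno)$. Thus the theorem reduces to verifying the explicit inequality
$$H(\sigma_c)+\Delta^p-H(\muno) \;>\; V^{*}.$$
Using the formula for $H(\sigma_c)-H(\muno)$ stated just before Theorem~\ref{thm:out_gammaBCK} and the definition \eqref{ellec} of $l_c$, I would expand both sides and reorganize so that the inequality becomes a rational inequality in $J$, $\lambda$, $h$ (equivalently, in $J$ and the ratio $\xi=h/\lambda$). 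The main step is to check that condition \eqref{mod007b} is precisely what makes this inequality hold in the relevant range of $\xi$; condition \eqref{mod007a} handles the complementary range where the inequality is already automatic, as the remark following \eqref{mod007c} indicates.

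The main obstacle I expect is the bookkeeping around the floor in the definition of $l_c$: one must show that the rounding error is negligible compared to the slack produced by \eqref{mod007b}. This is exactly where condition \eqref{mod007c} enters, since it guarantees $(2J+\lambda-h)/(2h)$ is not an integer, so $l_c-1<(2J+\lambda-h)/(2h)<l_c$ strictly, and the induced gap can be controlled uniformly. The remark after Theorem~\ref{thm:vmuno} notes that the upper-minus-lower gap $H(\sigma_s)-H(\sigma_c)-\Delta^p=2J+\lambda-h$ is of the same order as the term one must dominate, so the strictness of \eqref{mod007b} is used in an essential way.

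Once the inequality $H(\sigma_c)+\Delta^p-H(\muno)>V^{*}$ is secured, the conclusion is immediate: for every $\eta\in\mathcal{X}\setminus\{\muno,\puno\}$ one has
$$V_\eta \;\leq\; V^{*} \;<\; H(\sigma_c)+\Delta^p-H(\muno) \;\leq\; V_\muno,$$
so $\muno$ has strictly maximal stability level among non-ground-state configurations, and is therefore the unique metastable state in the sense of Section~\ref{s:pat}. No further path construction is required, since the two input theorems already encapsulate all the delicate variational analysis carried out in Sections~\ref{s:pro-th} and \ref{s:pro-le}.
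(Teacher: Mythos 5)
Your proposal is correct and follows essentially the same route as the paper: both reduce the theorem to the single inequality $V^*<H(\sigma_c)+\Delta^p-H(\muno)$, which the paper asserts as a consequence of condition \eqref{mod007b}, and then chain $V_\eta\le V^*<H(\sigma_c)+\Delta^p-H(\muno)\le V_\muno$ to conclude that $\muno$ has strictly maximal stability level among non-ground-state configurations. The only cosmetic difference is that the paper invokes an external result to pass from maximal stability level to uniqueness of the metastable state, whereas you appeal directly to the definition in Section~\ref{s:pat}; both are adequate.
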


Finally, we recall that the knowledge of the stability 
level of the metastable state allows to 
give the 
the asymptotic behavior as $\beta \to \infty$ 
of the transition time of the system started at the metastable 
state.

\begin{theorem}[Asymptotic behavior of the transition time]
\label{thm:transition_time_BCG}
For any $\epsilon>0$, we have
\begin{equation}
\label{innominata}
    \lim_{\beta \to \infty} 
\mathbb{P}_{\muno}(e^{\beta(H(\sigma_c)+\Delta^p-H(\muno)-\epsilon)}< 
             \tau_{\puno}<e^{\beta(H(\sigma_s)-H(\muno)+\epsilon)}) =1, 
\end{equation}
where $\tau_\puno$ is the first hitting time to $\puno$ of the chain 
started at $\muno$.
\end{theorem}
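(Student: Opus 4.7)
The plan is to deduce the result from the two preceding theorems by invoking the standard pathwise framework of \cite{manzo2004essential}, together with the elementary fact that $\puno \in \mathcal{I}_\muno$ (because, by Lemma \ref{t:lan00-5} and \eqref{H_0_+1_-1}, $\puno$ is the unique ground state, hence $H(\puno)<H(\muno)$). The key inputs are the sandwich $H(\sigma_c)+\Delta^p-H(\muno)\le V_\muno\le H(\sigma_s)-H(\muno)$ from Theorem \ref{thm:vmuno} and the uniform recurrence estimate \eqref{eq:recurrence_BCK} coming from Theorem \ref{thm:out_gammaBCK}. Because the value of $V_\muno$ is only known up to an interval, the proof produces an exit time known only up to the same interval, which is precisely the content of \eqref{innominata}.

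For the lower bound I would use the standard large deviation estimate: since $\puno\in\mathcal{I}_\muno$, we have $\tau_\puno\ge\tau_{\mathcal{I}_\muno}$, and by the very definition of stability level every path from $\muno$ to $\mathcal{I}_\muno$ must reach an energy at least $H(\muno)+V_\muno$. A classical Freidlin--Wentzell-type bound for Metropolis chains (e.g.\ \cite[Theorem 4.15]{manzo2004essential}) then yields $\mathbb{P}_\muno(\tau_{\mathcal{I}_\muno}<e^{\beta(V_\muno-\epsilon/2)})\to 0$. Combined with $V_\muno\ge H(\sigma_c)+\Delta^p-H(\muno)$, this gives the left-hand inequality in \eqref{innominata} with probability tending to one.

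For the upper bound I would combine two ingredients in an iterated manner. First, the upper bound $V_\muno\le H(\sigma_s)-H(\muno)$ together with the standard pathwise upper bound on exit times (e.g.\ \cite[Theorem 4.1]{manzo2004essential}) yields that in time $e^{\beta(V_\muno+\epsilon/2)}\le e^{\beta(H(\sigma_s)-H(\muno)+\epsilon/2)}$ the chain hits some configuration $\eta\in\mathcal{I}_\muno$ with probability tending to one. Second, starting from that $\eta$, Theorem \ref{thm:out_gammaBCK} gives $V_\eta\le V^*$, so the recurrence time to a configuration of strictly lower energy is bounded by $e^{\beta(V^*+\epsilon/2)}$ with super-exponentially high probability. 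Iterating this descent at most $|\mathcal{X}|$ times (energies strictly decrease at each step, so the procedure terminates at the ground state $\puno$) gives, in total time bounded by $e^{\beta(V_\muno+\epsilon/2)}+|\mathcal{X}|\,e^{\beta(V^*+\epsilon/2)}$, the arrival at $\puno$; for $\beta$ large this is dominated by $e^{\beta(H(\sigma_s)-H(\muno)+\epsilon)}$ since $V^*<H(\sigma_c)+\Delta^p-H(\muno)\le V_\muno$.

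The only delicate point is the descent step: once the chain enters $\mathcal{I}_\muno$ it must be prevented from climbing back to $\muno$, because otherwise the iteration would not converge to $\puno$. This is the main obstacle, but it is resolved by the comparison $V^*<V_\muno$: any return to $\muno$ from a configuration of energy strictly below $H(\muno)$ requires climbing an energy barrier of height at least $V_\muno$, hence a time of order $e^{\beta V_\muno}$, which is exponentially longer than the recurrence scale $e^{\beta V^*}$. Consequently, with probability tending to one, each iteration of the recurrence lands on a configuration with energy strictly below the starting one without ever revisiting $\muno$, and the sequence terminates at the global minimizer $\puno$. Putting the two bounds together yields \eqref{innominata}.
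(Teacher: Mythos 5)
Your argument is correct, but it takes a more hands-on route than the paper. The paper's proof is essentially one line: having already established in Theorem~\ref{thm:Identification} that $\muno$ is the unique metastable state, it applies \cite[Theorem~4.1]{manzo2004essential} with $\eta_0=\muno$ as a black box, obtaining $\lim_{\beta\to\infty}\mathbb{P}_{\muno}(e^{\beta(V_\muno-\epsilon)}<\tau_{\puno}<e^{\beta(V_\muno+\epsilon)})=1$, and then enlarges the event using the sandwich $H(\sigma_c)+\Delta^p-H(\muno)\le V_\muno\le H(\sigma_s)-H(\muno)$ of Theorem~\ref{thm:vmuno} --- which is exactly your final step. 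What you do instead is re-derive the content of that general theorem in this specific setting: a barrier-crossing lower bound for $\tau_{\mathcal{I}_{\muno}}$, and an upper bound obtained by first escaping from $\muno$ within time $e^{\beta(V_\muno+\epsilon/2)}$ and then running an iterated descent driven by the recurrence estimate of Theorem~\ref{thm:out_gammaBCK}. Your handling of the delicate point --- ruling out a return to $\muno$ during the descent --- is sound, but note that it is precisely here that the strict inequality $V^*<H(\sigma_c)+\Delta^p-H(\muno)\le V_\muno$, guaranteed by assumption \eqref{mod007b}, is indispensable, and that one must also take $\epsilon<V_\muno-V^*$ (harmless, since the event in \eqref{innominata} is monotone in $\epsilon$). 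The paper's route buys brevity and hides this comparison inside the hypothesis of the cited theorem (maximality of the stability level of $\muno$, i.e.\ Theorem~\ref{thm:Identification}); your route makes explicit exactly where Theorems~\ref{thm:out_gammaBCK} and \ref{thm:vmuno} enter, at the price of reproducing an argument that \cite{manzo2004essential} already packages.
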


\section{Proof of main results}\label{s:pro-th}
\par\noindent
In this section we collect the proofs of all the results stated in 
Section~\ref{s:mrs}.

\subsection{Proof of Lemma~\ref{t:lan00-5}}
\par\noindent
Let $\sigma$ and $\eta$ be two communicating configurations. If $\sigma=\eta$ the statement is trivial. We consider then the case $\sigma\neq \eta$. If there exists $(x \to y) \in E$ such that $\eta=\sigma^{(x,y)}$, then the statement follows by standard Metropolis computations. On the other hand, suppose that $\eta$ and $\sigma$ differ for the value of the spin in a single site $x\in\partial^-\Lambda$. Several cases have to be considered: 
\begin{itemize}
\item[a)] $\sigma(x)=+1$ and $\eta(x)=0$, we have
\begin{equation}\label{eq:mu_sigma_p_sigma_eta}
\mu_\beta(\sigma)p_\beta(\sigma,\eta)
=
\frac{e^{-\beta H(\sigma)}}{Z_\beta}
\frac{1}{2|E^+|}
\end{equation}
since the case ii) in the definition of the dynamics must be 
considered, and 
\begin{equation}\label{eq:mu_eta_p_eta_sigma}
\mu_\beta(\eta) p_\beta(\eta,\sigma)
=
\frac{e^{-\beta H(\eta)}}{Z_\beta}
\frac{1}{2|E^+|}e^{-\beta \Delta^p}
\end{equation}
since the case iii) in the definition of the dynamics must be considered. By \eqref{def:hamiltonian_BCK}, we obtain $H(\sigma)=H(\eta)+\Delta^p$ and we get that \eqref{eq:mu_sigma_p_sigma_eta} and \eqref{eq:mu_eta_p_eta_sigma} are equal.

\item[b)] $\sigma(x)=-1$ and $\eta(x)=0$, the proof is similar to that 
of case a).
\item[c)] $\sigma(x)=0$ and $\eta(x)=+1$, same as case a).
\item[d)] $\sigma(x)=0$ and $\eta(x)=-1$, same as case b).

\end{itemize}
We note that if $\sigma(x)=\pm 1$ and $\eta(x)=\mp 1$ with $x \in \partial^-\Lambda$, then $\sigma$ and $\eta$ are not communicating configurations. 

\subsection{Proof of Lemma~\ref{t:lan00-5}}
\par\noindent
The fact that $\puno$ is the ground state of the Hamiltonian 
is achieved as in the 
proof of \cite[Lemma~2.2]{cirillo2024homogeneous}
and using that $\Delta^p,\Delta^m>0$.

\subsection{Proof of Lemma~\ref{t:lan000}}
\par\noindent
We observe that when a particle moves in $\Lambda$, the energy of the system increases. In particular, the energy cost to get a plus (resp. a minus) in $\Lambda$ is $\Delta^p$ (resp. $\Delta^m$). This implies that the state $\zero$ is a local minumun. Moreover, also the state $\muno$ is a local minimum of the Hamiltonian, since the other possible moves have a positive energy cost, indeed a minus may move from $\Lambda_0$ in $\partial^+ \Lambda_0$ with an energy cost greater than (exit from the boundary) or equal to (exit from the corner) $4J$. 

\subsection{Auxiliary lemmas}\label{sec:auxiliary}
\label{s:auxlem}
\par\noindent
In this section we collect some auxiliary lemmas that wil be used in 
the proof of the following theorems.
The proofs of these lemmas are in Section \ref{s:pro-le}. 

First of all, given a configuration $\eta \in \mathcal{X}$,
we consider the set $\mathcal{C}(\eta) \subseteq \mathbb{R}^2$
defined as the union of the closed unitary squares centered at
sites of $\Lambda_0$ with the boundary parallel to the axes of
$\mathbb{Z}^2$ and such that the spin in $\eta$ associated with the site is 
plus. 
The maximal connected components $C_1,\dots, C_m$, 
with $m \in \mathbb{N}$, of $\mathcal{C}(\eta)$ are 
called \emph{clusters of pluses}. 
We define in the same way the \emph{clusters of minuses} 
and the \emph{clusters of zeros}. 
The boundary of each cluster is made of straight lines and corners
on the dual lattice,
that can be \emph{convex corners} or \emph{concave corners} 
following the usual $\mathbb{R}^2$ definitions. 
Moreover, the portion of the boundary of the cluster
delimited by two subsequent convex corners is
called a \emph{convex side} of the cluster,
otherwise, if at least one of the two corners is concave, 
it is called a \emph{concave side}.
We observe that each cluster has at least one convex side,
since $\Lambda_0$ is finite (and, recall, it is not a torus).

Moreover, given a configuration $\sigma\in\mathcal{X}$, 
a \emph{zero-carpet} of $\sigma$ is a connected set 
$X\subset\Lambda$ 
such that $\sigma(x)=0$ for 
each $x\in X$. In a similar way we define the \emph{s-carpet} 
of $\sigma$ with $s \in \{-1,+1\}$.

The following lemma is a key result of this paper. Indeed, 
here we show that 
if a particle, indifferently a plus or a minus, is transported along a 
zero carpet, then it possible to reduce 
the computation 
of the difference of energy 
between any two configurations visitec along the path to 
the examination of nine possible cases. 
Moreover, the height of this path in the configuration space
equals the height  
computed along the corresponding path on the 
graph by summing the weights of each transition.
We use this property to prove a bound for the height of the 
path in the configuration space which does not depend on its length.

\begin{lemma}[Weighted graph for the energy cost in a carpet]
\label{thm:grafo_pesato}
Let $s \in \{-1,0,+1\}$.
Consider a configuration 
$\sigma \in \mathcal{X}$ such that there exists 
a $s$-carpet $X$ of $\sigma$ and two nearest neighboring sites
$x \in \Lambda\setminus X$ and $x'\in X$ such that 
$\sigma(x)=r \neq s$.
Assume, also, that 
there exists $y \neq x'$ a nearest neighbor of $x$ such 
that $\sigma(y)=s$. 
Let $X'$ be the subset of $X$ obtained by collecting $x'$ together 
with all the sites of $X$ having at least two 
neighboring sites in $X$.
Then the following holds:
\begin{enumerate}
\item
for any $v\in X'\cup\{x\}$,
let
$\sigma^v=\sigma^{(x,v)}$ 
and recall that $\sigma^{(x,x)}$ is equal to $\sigma$.
Then, for any pair of neighboring 
sites $v,w\in X'\cup\{x\}$ 
\begin{align}
H(\sigma^v)-H(\sigma^w)
\in
\begin{cases}
\{-8J,-6J,-4J,-2J,0,2J,4J,6J,8J\} \qquad & \text{ if } s=0, \notag \\
\{-8J,-6J,-4J,-2J,0,2J,4J,6J,8J\} \qquad & \text{ if } s \neq 0 \text{ and } r=0, \notag \\
\{-16J,-12J,-8J,-4,0,4J,8J,12J,16J\} \qquad & \text{ if } s \neq 0 \text{ and } r=-s.
\end{cases}
\end{align}
can be computed as specified in the Figure~\ref{figure:graph_carpet}.
\item
For any $v\in X'$ 
\begin{align}
    \Phi(\sigma, \sigma^v)-H(\sigma) \leq 
    \begin{cases}
      8J\qquad & \text{ if } s= 0, \notag \\
      8J\qquad & \text{ if } s \neq 0 \text{ and } r=0, \notag \\
     16J\qquad & \text{ if } s \neq 0 \text{ and } r=-s. 
    \end{cases} 
\end{align}
\end{enumerate}
\end{lemma}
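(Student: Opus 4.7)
The plan is to reduce the whole analysis to a bond--by--bond accounting at the two sites where spins are being exchanged. Recall from \eqref{def:hamiltonian_BCK00} that $H(\sigma)$ decomposes into a nearest--neighbor interaction part (with weight $-J$ on same--sign non--zero bonds and $+J$ on opposite--sign non--zero bonds) and chemical potential terms depending only on the \emph{numbers} of $+1$ and $-1$ spins in $\Lambda_0\cup\partial^+\Lambda_0$. Since every $\sigma^v=\sigma^{(x,v)}$ is obtained from $\sigma$ by permuting two spins, the total counts of each spin value are preserved, so the chemical potential terms cancel in every difference of energies and the problem reduces to a local computation involving only the bonds incident to $v$ or to $w$.

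For part (1), fix neighboring $v,w\in X'\cup\{x\}$. The configurations $\sigma^v$ and $\sigma^w$ differ only at the two neighboring sites $v$ and $w$, so passing from one to the other is a single Kawasaki swap and formula \eqref{def:cost_swap} applies. The bond $(v,w)$ itself contributes zero to $H(\sigma^v)-H(\sigma^w)$, and the remaining change comes from at most three bonds at $v$ and three at $w$, each contributing an amount in $\{-2J,0,+2J\}$ when $s=0$ or when $s\ne 0$ and $r=0$, and in $\{-4J,0,+4J\}$ when $s\ne 0$ and $r=-s$; in the last case every flipped bond moves from the $s\cdot s$ type to the $s\cdot(-s)$ type (or the reverse), doubling the per--bond cost. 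Exploiting the carpet hypothesis---every element of $X'\setminus\{x'\}$ has at least two neighbors in $X$, and $x$ has at least two neighbors $x'$ and $y$ of spin $s$---I would constrain several of the bonds in advance and enumerate the remaining finitely many configurations. Summing the allowed per--bond contributions then produces precisely the finite sets of weights stated in the lemma and, by direct inspection, reproduces the graph of Figure~\ref{figure:graph_carpet}.

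For part (2), I would prove the stronger pointwise statement that for every $v\in X'\cup\{x\}$ already $H(\sigma^v)-H(\sigma)\le 8J$ (respectively $16J$ in the third case). Together with part (1) this suffices: one can link $\sigma=\sigma^x$ to $\sigma^v$ by a path inside the carpet, each step of which is a legal Kawasaki swap of the moving particle with a neighboring spin $s$ and hence visits only configurations of the form $\sigma^u$ with $u\in X'\cup\{x\}$, so the height of any such path is bounded by $\max_u[H(\sigma^u)-H(\sigma)]$. The pointwise inequality is again obtained by the bond accounting: at $x$, whose spin changes from $r$ to $s$, the two forced neighbors $x'$ and $y$ of spin $s$ pin part of the contribution; at $v$, the at least two neighbors of $v$ lying in $X$ do the same; and the remaining at most two free bonds on each side contribute at most $2J$ apiece (respectively $4J$ in the $r=-s$ case), giving the required total.

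The main technical obstacle is the case analysis in part (1), especially the treatment of the distinguished site $x'$: unlike the other elements of $X'$, the site $x'$ is not assumed to have two neighbors in $X$, so when $v$ or $w$ equals $x'$ the bond count on that side must be handled separately and the enumeration enlarges accordingly. A secondary point is the nearest--neighbor connectedness of $X'\cup\{x\}$ needed to build the path in part (2); this should follow by combining the edge $\{x,x'\}$ with a connectedness argument for the ``interior'' of the connected carpet $X$, with the assumption $\sigma(y)=s$ providing the bridge needed when $x'$ has only few neighbors in $X$.
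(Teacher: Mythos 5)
Your proposal is correct and follows essentially the same route as the paper: part (1) is the same bond-by-bond accounting via \eqref{def:cost_swap} that underlies the table of weights, and your ``stronger pointwise statement'' $H(\sigma^v)-H(\sigma)\le 8J$ (resp.\ $16J$) combined with the observation that the transport path only visits configurations $\sigma^u$, $u\in X'\cup\{x\}$, is exactly the content of the paper's telescoping argument ($w(v,u)=f(v)-f(u)$, so the maximal sub-path weight is bounded by the range of $f$). The two technical points you flag --- the special treatment of $x'$, which need not have two neighbours in $X$, and the connectedness of $X'\cup\{x\}$ --- are genuine and are in fact glossed over in the paper's own proof, so addressing them explicitly would only strengthen the argument.
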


\begin{figure}[t]
\centering
\raisebox{-0.5\height}{\includegraphics[width=0.32\textwidth]{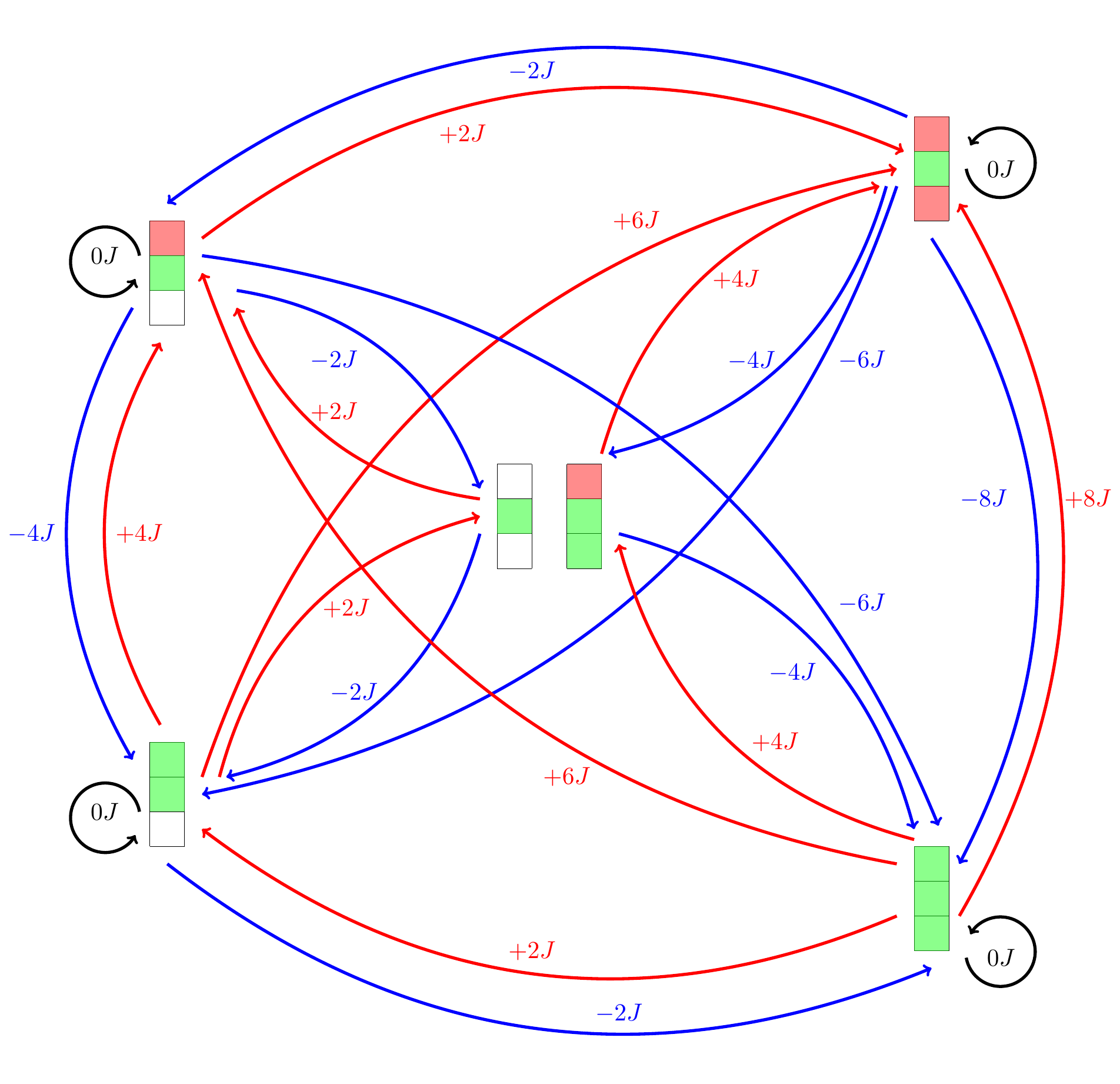}}
    \,\,
    \raisebox{-0.5\height}{\includegraphics[width=0.32\textwidth]{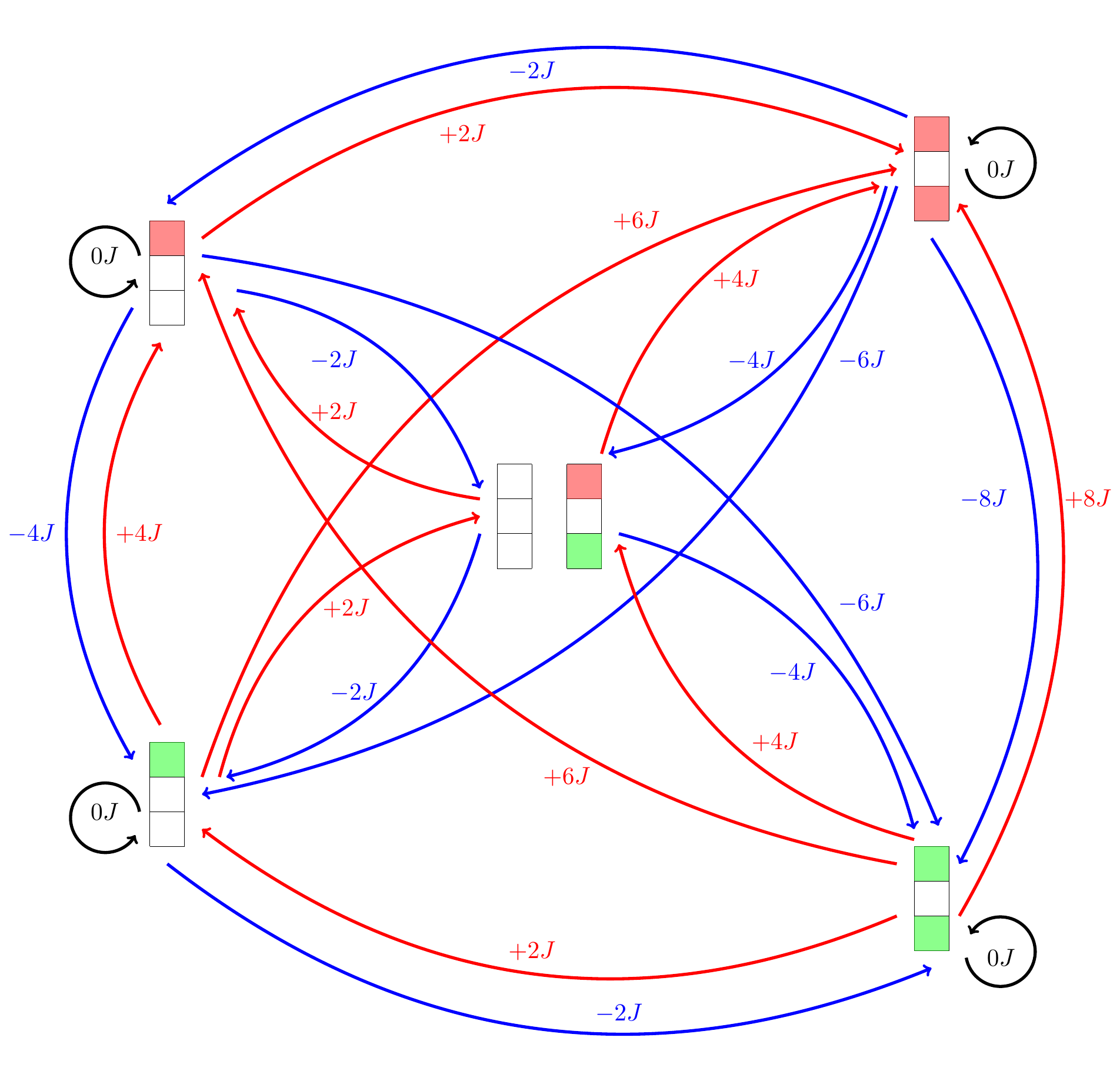}}
    \,\,
\raisebox{-0.5\height}{\includegraphics[width=0.32\textwidth]{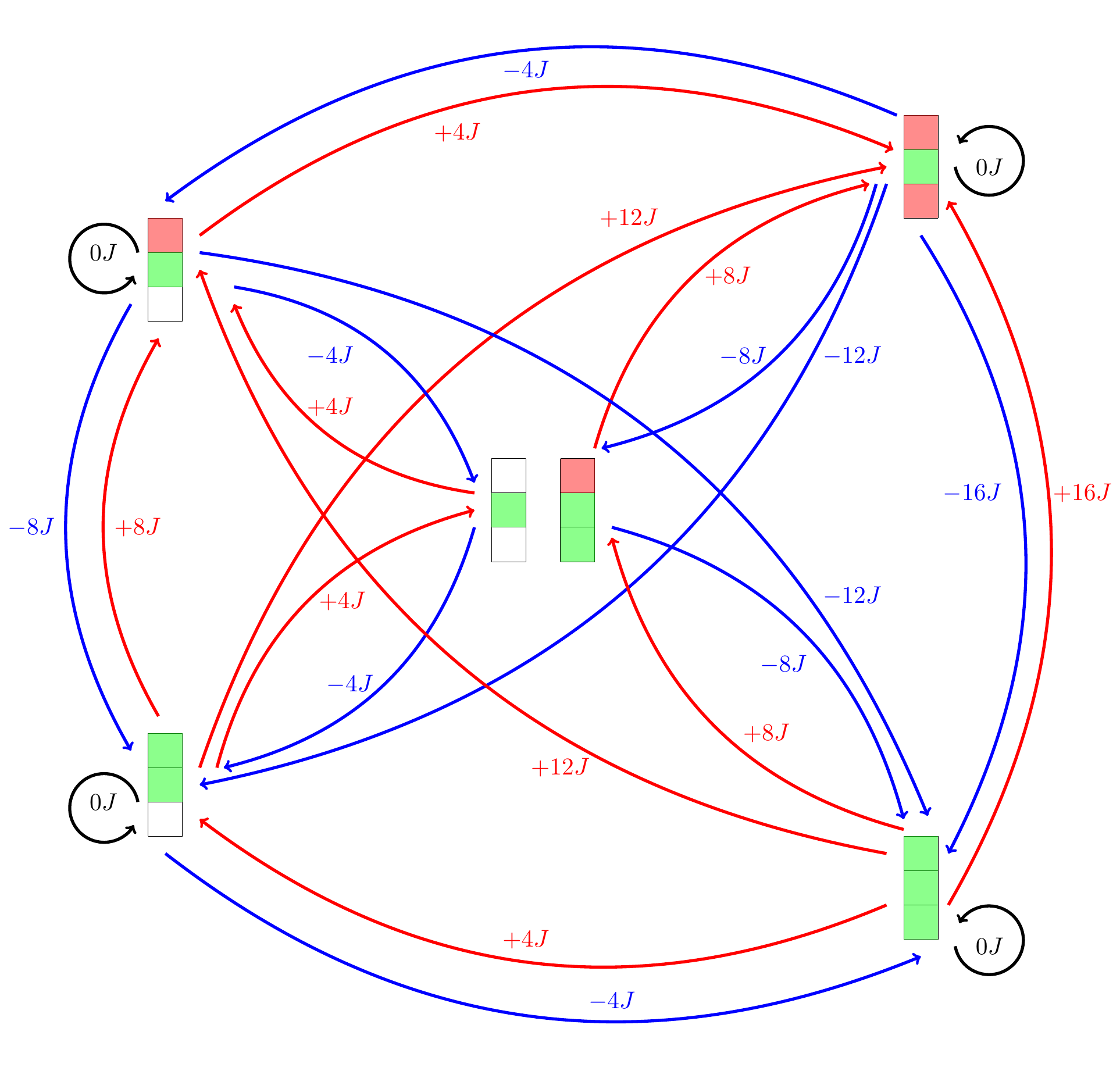}}
\caption{In each vertex of the graph 
the three squares represent three sites such that the one at the 
center is the site $x$ and it is nereast neighbor of the other two.
The local configuration is reported with the colors 
red, green and white representing, respectively, 
the spin values $s \neq 0$, $-s$, and 0. 
The other two 
sites neighboring the middle square and not reported in the picture 
are occupied by a zero spin in the left panel, and by a $s$ spin in the center and right panel. 
The vertex at the center of the graph corresponds to any 
of the two reported configurations.
Any transition 
from $\sigma^v$ to $\sigma^w$ for 
$v,w\in X'\cup\{x\}$ 
is realized via the swap of the particle
between two sites with local configuration represented by 
one of the vertices of the graph.
For each possible swap the energy difference is reported in the picture.
}
\label{figure:graph_carpet}
\end{figure}

In the next lemma, we show that if a particle is transported 
inside or outside $\Lambda_0$ through a carpet, 
than Lemma~\ref{thm:grafo_pesato}
can be used to bound the 
height of the path by $18J$.

\begin{lemma}[Transport through a zero-carpet]\label{lem:transport_zero}
Consider a configuration 
$\sigma \in \mathcal{X}$ such that there exists 
a zero-carpet $X$ of $\sigma$ such that 
$X\cap\partial^-\Lambda_0\neq\emptyset$. 
\begin{itemize}
\item[(i)] Given $x \in X$,
let $\eta:=\sigma^{(x;s)}$ with $s \in \{-1,+1\}$, then
\begin{equation}
\Phi(\sigma, \eta)-H(\sigma) 
\leq 18J \qquad \text{and} \qquad 
H(\eta)=H(\sigma)+\Delta_s-2D_x(\eta)J,
\end{equation}
where $D_x(\eta)$ is defined in \ref{def:number_interfaces}.
\item[(ii)] Given $x \in \Lambda_0$ a nearest neighbor of $X$ such that $\sigma(x)=s \neq 0$.
Let $\eta:=\sigma^{(x;0)}$, then
\begin{equation}
\Phi(\sigma, \eta)-H(\sigma) 
\leq 14J \qquad 
\text{and} 
\qquad H(\eta)=H(\sigma)+2D_x(\sigma)J-\Delta_s.
\end{equation}
\end{itemize}
\end{lemma}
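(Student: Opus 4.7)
In both parts, the plan is to construct explicit paths from $\sigma$ to $\eta$ that transport a particle of spin $s$ between the reservoir and the site $x$ through the zero-carpet $X$, and then to control the height by combining the boundary moves of the dynamics with the weighted-graph estimate of Lemma~\ref{thm:grafo_pesato}. The common preparation is to fix $z\in X\cap\partial^-\Lambda_0$ and a neighbor $w\in\partial^+\Lambda_0$ of $z$, and, if needed, to set $\sigma(w)$ to zero via step~(ii) of the dynamics: since $\Delta^p,\Delta^m>0$ by \eqref{mod007a}, this is a downhill move and hence harmless for the height. A further downhill zeroing of a second $\partial^-\Lambda$-neighbor of $w$ takes care of the structural hypothesis of Lemma~\ref{thm:grafo_pesato} whenever it is not already met.

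For part~(i), starting from the prepared configuration, I would (a) raise $\sigma(w)$ to $s$ by step~(iii) of the dynamics, at energy cost $\Delta_s<4J$ by \eqref{mod007a}, and (b) apply Lemma~\ref{thm:grafo_pesato} in the case ``carpet spin $0$, particle spin $r\neq 0$'' to transport the new particle along $X$ from $w$ through $z$ to the target $x$, which adds at most $8J$ to the height. If $x$ happens to be a leaf of $X$ and hence does not belong to the set $X'$ produced by that lemma, the path is continued by first routing to the unique non-leaf neighbor $x''\in X'$ of $x$ and closing with a single swap $(x'',x)$: using the rewriting \eqref{def:hamiltonian_BCK00}, its energy change is bounded by the interaction cost of the two endpoints, producing the last contribution to $\Phi(\sigma,\eta)-H(\sigma)\le 18J$. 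The closed-form expression $H(\eta)=H(\sigma)+\Delta_s-2J\,D_x(\eta)$ then follows from the same rewriting: the flip $0\mapsto s$ at $x\in\Lambda_0$ contributes $\Delta_s$ through the $\Delta^p,\Delta^m$ term and $-2J\,D_x(\eta)$ through the change of the nearest-neighbor interaction.

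Part~(ii) is the time-reversal of part~(i). Applying Lemma~\ref{thm:grafo_pesato} now to push the spin $s$ from $x$ along $X$ to $z$ (height increase at most $8J$), then swapping $z$ with $w$ to bring the particle onto the boundary (cost at most $6J$, since $z\in\partial^-\Lambda_0$ has at most three $\Lambda_0$-neighbors), and finally removing the particle at $w$ via step~(ii) releases $\Delta_s$ of energy, yielding the sharper bound $\Phi(\sigma,\eta)-H(\sigma)\le 14J$; the energy formula $H(\eta)=H(\sigma)+2J\,D_x(\sigma)-\Delta_s$ is the exact inverse of the one in part~(i). The main technical obstacle is to verify the geometric hypothesis of Lemma~\ref{thm:grafo_pesato} at both ends of the highway and, in part~(i), to control the leaf case by exploiting the cancellation between the energy released at the detour site $x''$ when its spin returns to zero and the energy paid at $x$ when it acquires the spin $s$, so that the total stays within the $18J$ budget.
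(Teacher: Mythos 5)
Your overall strategy coincides with the paper's: in both parts one writes down the explicit path (create or annihilate the particle at a site of $\partial^+\Lambda_0$, move it along the carpet by nearest--neighbour swaps, treat the two endpoint swaps separately), bounds the height of the carpet portion by the $8J$ of Lemma~\ref{thm:grafo_pesato}, and reads off the exact energy identity from \eqref{def:hamiltonian_BCK00}/\eqref{def:cost_swap}. Part~(i), including your treatment of a leaf target via a final swap from the adjacent site of $X'$, is essentially the paper's proof (the paper always performs that last swap explicitly and bounds its contribution by $2J|D_x(\eta)|\le 6J$, which together with $\Delta_s<4J$ and the $8J$ of the graph lemma gives the $18J$).

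In part~(ii) there is a concrete gap. You invoke Lemma~\ref{thm:grafo_pesato} ``to push the spin $s$ from $x$ along $X$ to $z$'' with height increase at most $8J$, i.e.\ you apply the graph lemma with $x$ itself as the off-carpet site. But that lemma requires $x$ to have a \emph{second} neighbour $y\neq x'$ carrying the carpet spin (here, a second zero neighbour), and the hypothesis of part~(ii) only guarantees that $x$ is adjacent to the zero-carpet: $x$ may well have three neighbours with spin $s$ and a single zero neighbour, in which case the graph lemma does not apply at $x$ and the first detachment swap is not covered by the $8J$ (it can by itself cost up to $2J\bigl[D_x(\sigma)-D_{x_1}(\sigma^{(x,x_1)})\bigr]$, i.e.\ of order $8J$). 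You acknowledge this as ``the main technical obstacle'' but do not resolve it, and your budget hides it: you charge the spare $6J$ to the swap $z\leftrightarrow w$ at the boundary end, which is in fact cheap (for $w\in\partial^+\Lambda_0$ one has $D_w\equiv 0$ by \eqref{def:number_interfaces}), whereas the expensive step is the first one at $x$. The paper's proof handles exactly this point by performing the swap $x\to x_1$ by hand, accounting for its cost through the term $2J|D_{x_0,x_1}(\omega_0)|$, and only then applying Lemma~\ref{thm:grafo_pesato} starting from $\omega_1$, where the transported particle sits at a genuine transit site of the carpet with two zero neighbours. To close your argument you should do the same: split off the first swap, bound its cost explicitly, and apply the graph lemma only from the first carpet site onward.
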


In words, in $(i)$ we state that to transport a plus or a minus through a zero-carpet from outside to a site $x$ with spin zero, has an energy cost smaller than or equal to $18 J$ and the energy of the final configuration depends on the local configuration around $x$. While, in $(ii)$, we state that a plus or a minus in the site $x$ follows a zero-carpet and then it leaves $\Lambda$ with an energy cost smaller than or equal to $10 J$; also in this case the energy of the final configuration depends on the local configuration around $x$. 

\begin{lemma}[Transport through a $s$-carpet]
\label{lemma:transport_scarpet}
Consider a configuration 
$\sigma \in \mathcal{X}$ such that there exists 
a $s$-carpet $X$ of $\sigma$ such that 
$X\cap\partial^-\Lambda_0\neq\emptyset$ and 
$y \in \Lambda$ a nearest neighbor of $X$. 
\begin{itemize}
\item[(i)] If $\sigma(y) \neq 0$, 
 then let $\eta:=\sigma^{(y;0)}$ and we have
\begin{align}\label{eq:from_scarpet_to_boundary1}
\Phi(\sigma, \eta)-H(\sigma) \leq 54J 
\qquad 
\text{and} 
\qquad 
H(\eta)=H(\sigma)+2D_y(\sigma)J-\Delta_{s},
\end{align}
where $D_y(\sigma)$ is defined in \ref{def:number_interfaces}.

\item[(ii)] If $\sigma(y)=0$, then let $\eta:=\sigma^{(y;s)}$ and we have
\begin{align}\label{eq:from_scarpet_to_boundary2}
\Phi(\sigma, \eta)-H(\sigma) \leq 35J 
\qquad 
\text{and} 
\qquad 
H(\eta)=H(\sigma)+\Delta_s-2D_y(\eta)J.
\end{align}
\end{itemize}
\end{lemma}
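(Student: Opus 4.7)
The plan for both cases is to construct an explicit path $\underline{\omega}$ from $\sigma$ to $\eta$ whose height is bounded by iterated application of Lemma~\ref{thm:grafo_pesato} (carpet transport), combined with single swaps and reservoir moves. Since $\sigma$ and $\eta$ differ only at the site $y$, the formulas for $H(\eta)$ follow by direct computation with the form \eqref{def:hamiltonian_BCK00}, counting the interfaces around $y$ that are created or destroyed by the single spin change.

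For case (ii) ($\sigma(y)=0$, $\eta=\sigma^{(y;s)}$), I would first pick a neighbor $x' \in X$ of $y$ and a boundary site $z \in X \cap \partial^-\Lambda_0$ reachable from $x'$ inside the auxiliary set $X'$ of Lemma~\ref{thm:grafo_pesato}. The path then proceeds in four phases: (I) apply Lemma~\ref{thm:grafo_pesato} with carpet spin $s$ and moved spin $r=0$ to transport the $0$ at $y$ through $X$ to $z$ (height at most $8J$ above $H(\sigma)$), obtaining a configuration in which $y$ holds $s$, $z$ holds $0$, and the rest of $X$ is unchanged; (II) swap $z$ with its frame neighbor $z' \in \partial^+\Lambda_0$; (III) inject $s$ at $z'$ via the reservoir (dynamics case (iii)); (IV) swap $z$ and $z'$ once more, restoring $z$ to spin $s$ and $z'$ to its original value $\sigma(z')$. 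A short check verifies that the final configuration coincides with $\eta$ in each of the three subcases $\sigma(z') \in \{0, s, -s\}$.

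For case (i) ($\sigma(y) \neq 0$, $\eta = \sigma^{(y;0)}$), the symmetric strategy exports the particle at $y$ to the reservoir, possibly through two passes of carpet transport. If $\sigma(y) = s$, one first removes an $s$ at a frame site $w \in \partial^-\Lambda$ belonging to the $s$-cluster of $y$ via the reservoir, and then applies Lemma~\ref{thm:grafo_pesato} with $r=0$ to transport the newly created $0$ from $w$ through the carpet to $y$ (height $\leq 8J$), which simultaneously restores $w$ to spin $s$. If $\sigma(y) = -s$, the expensive step is transporting the $-s$ through the $s$-carpet to the boundary; this is controlled by Lemma~\ref{thm:grafo_pesato} in the case $s \neq 0$, $r = -s$, with bound $16J$. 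After the $-s$ is removed at the reservoir, $y$ is left holding $s$, so a further carpet transport of a $0$ back to $y$ (bound $8J$) finishes the path.

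The main obstacle will be the bookkeeping of cumulative height across the phases. Each application of Lemma~\ref{thm:grafo_pesato} controls only the increment $\Phi(\omega_i, \omega_{i+1}) - H(\omega_i)$, whereas the quantity to be bounded is $\max_i H(\omega_i) - H(\sigma)$. I must therefore track the drift $H(\omega_i) - H(\sigma)$ at the start of each phase, driven by the swap cost near the frame and by the reservoir increment of size $\Delta_s - 2J D_{z'}$. Verifying that the cumulative total stays within $35J$ for case (ii) and $54J$ for case (i) requires a subcase analysis on $\sigma(z')$ (taking values in $\{0, s, -s\}$) and on the local topology of $X$ near $x'$, rerouting the auxiliary transport through another boundary site of $X$ in the degenerate situation where $x'$ is a cut vertex of $X$.
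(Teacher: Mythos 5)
Your proposal follows essentially the same route as the paper: both construct the same explicit paths (carpet transport of the moved spin controlled by Lemma~\ref{thm:grafo_pesato}, a reservoir exchange at $\partial^+\Lambda_0$, and, in case (i), a second carpet pass returning a zero to $y$), and both reduce the stated bounds to bookkeeping of the energy drift between phases, with $H(\eta)$ computed directly from the single-site difference. The one caution is that your phase (I) claim of height at most $8J$ above $H(\sigma)$ holds only after the first swap out of $y$, whose cost the paper bounds separately by $2J|D_{y_0,y_1}(\sigma)|\leq 10J$ before invoking Lemma~\ref{thm:grafo_pesato} from $\omega_1$ onward; this is absorbed by the slack in the final constants and by the subcase analysis you already anticipate.
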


In words, in $(i)$ a particle with spin $-s$ follows a $s$-carpet from its position $y$ to $\partial^+ \Lambda_0$ and then it is replaced by a zero. This zero runs through the $s$-carpet until it reaches the site $y$. This path has an energy cost smaller than or equal to $38 J$ and the energy of the final configuration depends only on the local configuration around $y$.
While, $(ii)$ considers the motion of the zero initially in $x$ along the $s$-carpet, until it reaches $\partial^- \Lambda_0$. Hence, this zero is exchanged with a particle with spin $s$ created in $\partial^+\Lambda_0$. The energy cost of this path is smaller than or equal to $54 J$ and the energy of the final configuration depends only on the local configuration around $y$.

We will now state some lemmas in which we use the notion of 
carpet to estimate the stability level of some specified 
configurations. 
Before stating the lemmas we need a new definition.

\begin{lemma}\label{lem:bond_pm}
Let $\sigma$ be a configuration that contains a bond of type $(+,-)$. 
Assume that 
there exists a zero-carpet of $\sigma$ at distance one 
from the bond and intersecting $\partial^-\Lambda_0$ 
or the bond is at distance one from $\partial^+\Lambda_0$, 
then $V_\sigma \leq 14J$.
\end{lemma}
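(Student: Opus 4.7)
The plan is, for any such $\sigma$, to exhibit a path from $\sigma$ to some $\eta$ with $H(\eta)<H(\sigma)$ whose height exceeds $H(\sigma)$ by at most $14J$; this yields $V_\sigma\le 14J$ directly from the definition of the stability level. Let $\{x,y\}$ be the $(+,-)$ bond, so $\sigma(x)=+1$ and $\sigma(y)=-1$.

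In the zero--carpet case, choose $w$ in the carpet $X$ at distance one from the bond; by symmetry I may assume $w$ is adjacent to $y$. Then $y$ is a nearest neighbor of $X$ with $\sigma(y)=-1\neq 0$, so Lemma \ref{lem:transport_zero}(ii) applied at $y$ with $s=-1$ produces $\eta=\sigma^{(y;0)}$ satisfying $\Phi(\sigma,\eta)-H(\sigma)\le 14J$ and $H(\eta)-H(\sigma)=2JD_y(\sigma)-\Delta^m$. The key step is the uniform bound $D_y(\sigma)\le 1$: the neighbor $x$ has the opposite spin $+1$ and contributes $-1$ to $D_y$, the neighbor $w$ has spin $0$ and contributes $0$, and each of the at most two remaining neighbors contributes at most $+1$. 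Using $\Delta^m=4J-\lambda+h$, I get $H(\eta)-H(\sigma)\le 2J-\Delta^m=\lambda-h-2J$, and \eqref{mod007a} forces $\lambda+h<J/3$ so that $\lambda-h<J/3<2J$; the upper bound is strictly negative, and $\eta\in\mathcal{I}_\sigma$ with $V_\sigma\le 14J$. The case in which $w$ is adjacent to $x$ is symmetric, with $\Delta^p$ replacing $\Delta^m$ and $-2J+\lambda+h<0$.

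In the boundary case, say $y\in\partial^-\Lambda_0$ has a neighbor $z\in\partial^+\Lambda_0$ (the case $x\in\partial^-\Lambda_0$ is symmetric). If $\sigma(z)\neq 0$, a single type--ii move of the dynamics sets $\sigma(z)$ to $0$ with energy change $-\Delta^p$ or $-\Delta^m$, both strictly negative by \eqref{mod007a}; hence $V_\sigma\le 0$. If $\sigma(z)=0$, I would first swap $y$ with $z$: by \eqref{def:cost_swap} the energy change is $2JD_y(\sigma)\le 2J$ with the same bound on $D_y$ as above; the subsequent boundary flip of $\sigma(z)=-1\to 0$ then drops the energy by $\Delta^m$, so the net change is $2J-\Delta^m<0$ and the path height is at most $2J$. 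Thus $V_\sigma\le 2J\le 14J$ in this subcase as well.

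The main obstacle is concentrated in the zero--carpet scenario: the $14J$ bound is saturated by Lemma \ref{lem:transport_zero}(ii), and the heart of the argument is verifying the local bound $D_y(\sigma)\le 1$, which uses precisely that the $(+,-)$ bond contributes a $-1$ to $D_y$ cancelling the worst possible $+1$ from the remaining neighbors. Together with \eqref{mod007a}, this leaves just enough room to beat the $\Delta^m$ boundary cost of the transported spin and guarantee a strict energy decrease; the boundary scenario is then a simple two--step path.
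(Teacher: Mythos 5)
Your proposal is correct and follows essentially the same route as the paper: in the carpet case it invokes Lemma \ref{lem:transport_zero}(ii) at the bond site adjacent to the carpet and uses the observation that the $(+,-)$ bond forces $D\le 1$ there, so that $2J-\Delta^{m}$ (or $2J-\Delta^{p}$) is negative under \eqref{mod007a}; in the boundary case it uses the same two--step swap--and--annihilate path of height at most $2J$. The only cosmetic difference is that the paper dispatches the subcase of a nonzero spin in $\partial^{+}\Lambda_{0}$ once at the outset, whereas you fold it into the boundary case.
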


\begin{lemma}\label{lem:bond_pm_different_phase}
Let $\sigma$ be a configuration that contains a bond of type $(+,-)$ and 
let $s \in \{-1,+1\}$. 
If there exists a $s$-carpet of $\sigma$
at distance one from $\partial^+\Lambda_0$ 
and from 
the site of the bond 
with spin $-s$, then $V_\sigma \leq 54J$.
\end{lemma}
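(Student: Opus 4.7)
The plan is to apply Lemma \ref{lemma:transport_scarpet}(i) to push the $-s$ endpoint of the $(+,-)$ bond out of $\Lambda$ through the $s$-carpet $X$, producing a strictly lower energy configuration reachable from $\sigma$ via a path of height at most $54J$ above $H(\sigma)$.

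First I would fix notation. Let $y,y'$ denote the two endpoints of the $(+,-)$ bond, with $\sigma(y)=-s$ and $\sigma(y')=s$, and let $X$ be the $s$-carpet supplied by the hypothesis. The condition that $X$ lies at distance one from $\partial^+\Lambda_0$ forces $X\cap\partial^-\Lambda_0\neq\emptyset$, since a site of $\Lambda_0$ at distance one from $\partial^+\Lambda_0$ must belong to $\partial^-\Lambda_0$; the condition that $X$ lies at distance one from $y$ means that $y$ is a nearest neighbor of $X$. These are exactly the hypotheses needed to invoke Lemma \ref{lemma:transport_scarpet}.

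Second, applying Lemma \ref{lemma:transport_scarpet}(i) to the pair $(X,y)$ (which is allowed since $\sigma(y)=-s\neq 0$) produces a path $\underline{\omega}$ from $\sigma$ to $\eta:=\sigma^{(y;0)}$ satisfying
\[
\Phi(\underline{\omega})-H(\sigma)\leq 54J, \qquad H(\eta)-H(\sigma)=2JD_y(\sigma)-\Delta_{\sigma(y)}.
\]
It then remains to verify that $H(\eta)<H(\sigma)$, since this gives $\eta\in\mathcal{I}_\sigma$ and hence $V_\sigma\leq 54J$. Because $y$ has $y'$ with spin $s$ as a neighbor and at least one further $s$-neighbor belonging to $X$, in the generic situation we obtain $D_y(\sigma)\leq 0$, so that $2JD_y(\sigma)-\Delta_{\sigma(y)}\leq -\Delta_{\sigma(y)}<0$ by assumption \eqref{mod007a}.

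The main obstacle is the degenerate case in which $y'$ is the unique $s$-neighbor of $y$ (so $y'\in X$) while the three remaining neighbors of $y$ all carry spin $-s$: then $D_y(\sigma)=2$ and the single application of the transport lemma gives $H(\eta)-H(\sigma)=4J-\Delta_{\sigma(y)}$, which is positive but bounded by $\lambda+h<J/3$ thanks to \eqref{mod007a}. To close the argument in this case I would extend the path past $\eta$ by a second transport, exploiting the fact that in $\eta$ the carpet $X$ is still intact and now abuts the newly created zero at $y$, so that one of the three surrounding $-s$ spins becomes adjacent to an enlarged $s$-carpet through $y'$. The key verification is that the two successive transports combine into a path from $\sigma$ to a configuration of energy strictly below $H(\sigma)$ whose height remains bounded by $H(\sigma)+54J$, a bookkeeping that is made possible by the smallness of the intermediate energy rise $\lambda+h$ relative to the margin $54J$.
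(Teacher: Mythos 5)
Your generic case is sound and coincides with the paper's first step: if the $-s$ endpoint $y$ of the bond has at most two neighbors with spin $-s$, then $D_y(\sigma)\le 1$ and a single application of Lemma \ref{lemma:transport_scarpet}-(i) yields $H(\sigma^{(y;0)})-H(\sigma)\le 2J-\Delta_{-s}=-2J+(\lambda\pm h)<0$ with height at most $54J$. (Note that your dichotomy as written skips the intermediate case $D_y(\sigma)=1$, e.g.\ one $s$-neighbor, two $-s$-neighbors and one zero, but that case is still covered by the same inequality.) The genuine gap is your degenerate case, where $y$ has three $-s$ neighbors, $D_y(\sigma)=2$, and $\eta=\sigma^{(y;0)}$ satisfies $H(\eta)=H(\sigma)+4J-\Delta_{-s}>H(\sigma)$. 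Your proposed ``second transport through an enlarged $s$-carpet'' is not available: after the swap the site $y$ carries spin $0$, so it cannot be adjoined to the $s$-carpet $X$ (whose sites must have spin $s\neq 0$); the three $-s$ spins around $y$ are adjacent to the isolated zero at $y$ but need not be adjacent to $X$, so Lemma \ref{lemma:transport_scarpet} does not apply to them, and Lemma \ref{lem:transport_zero} does not apply either because the singleton $\{y\}$ is not a zero-carpet meeting $\partial^-\Lambda_0$. Even granting some way to extract a second spin $z$, in $\eta$ one may have $D_z(\eta)=3$, so the energy rises by a further $6J-\Delta_{-s}=2J+(\lambda\pm h)>0$; nothing forces this iteration to terminate at a configuration of energy below $H(\sigma)$, which is exactly what is needed to bound $V_\sigma$. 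The ``bookkeeping'' you defer is the whole missing argument.

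The paper resolves the bad case without any iterated transport: it scans the two columns (or rows) containing the $(+,-)$ bond. Either all bonds along them are of type $(+,-)$ down to $\partial^-\Lambda_0$, in which case the boundary copy of the bond has an endpoint with at most two like neighbors (recall $\sigma$ vanishes on $\partial^+\Lambda_0$); or the first bond of a different type identifies, in the adjacent $(+,-)$ bond, either a plus with at most two plus neighbors or a minus with at most two minus neighbors. In every branch one returns to the single-transport step of the generic case, applied to a \emph{different} endpoint of a \emph{different} $(+,-)$ bond. You should replace your degenerate-case paragraph with an argument of this type (or another device that locates a removable endpoint); as it stands the proof is incomplete.
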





\begin{lemma}\label{lem:p_distance_2_0carpet}
Let $x \in \Lambda$ and let $x_i$ be the nearest neighbours of $x$, 
with $i=1,\dots,4$. Let $\sigma$ be a configuration such 
that $\sigma(x)=0$, $\sigma(x_1)=\sigma(x_2)=+1$, $\sigma(x_3)=0$, 
and $\sigma(x_4) \neq -1$ or $\sigma(x_4) =-1$  with at most two 
nearest neighbors equal to $-1$. 
If there exists a zero-carpet of $\sigma$ at distance 
one from $x$ and intersecting $\partial^+\Lambda_0$,
then $V_\sigma< 18J$. 
\end{lemma}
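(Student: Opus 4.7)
The plan is to exhibit a path from $\sigma$ to a configuration in $\mathcal{I}_\sigma$ whose maximal excess over $H(\sigma)$ is strictly smaller than $18J$, by first injecting a plus at $x$ and, if needed, removing the minus at $x_4$. Since $\sigma(x)=0$ and the carpet $X$ sits at distance one from $x$, the set $X':=X\cup\{x\}$ is still a zero-carpet of $\sigma$; being connected, touching $\partial^+\Lambda_0$, and containing a site adjacent to $x\in\Lambda_0$, it also intersects $\partial^-\Lambda_0$. Applying Lemma~\ref{lem:transport_zero}(i) to $X'$ with $s=+1$ I obtain $\eta_1:=\sigma^{(x;+1)}$ via a path of excess height at most $\Delta^p+8J=12J-(\lambda+h)$, where $\Delta^p$ is the boundary creation cost and the $8J$ comes from Lemma~\ref{thm:grafo_pesato}(2) for the transport inside the carpet. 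The formula in Lemma~\ref{lem:transport_zero}(i) yields
\begin{equation*}
H(\eta_1)-H(\sigma)=\Delta^p-2D_x(\eta_1)J,\qquad D_x(\eta_1)=2+\mathbf{1}_{\sigma(x_4)=+1}-\mathbf{1}_{\sigma(x_4)=-1},
\end{equation*}
since $\sigma(x_1)=\sigma(x_2)=+1$ and $\sigma(x_3)=0$.

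If $\sigma(x_4)\neq -1$, then $D_x(\eta_1)\ge 2$, hence $H(\eta_1)-H(\sigma)\le\Delta^p-4J=-(\lambda+h)<0$, so $\eta_1\in\mathcal{I}_\sigma$ and $V_\sigma\le\Delta^p+8J<18J$.

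Otherwise $\sigma(x_4)=-1$ and $D_x(\eta_1)=1$, giving $H(\eta_1)-H(\sigma)=2J-(\lambda+h)>0$. The configuration $\eta_1$ now contains the $(+,-)$ bond between $x$ and $x_4$; the carpet $X$ (unchanged because $x\notin X$) lies at distance one from this bond through $x_3\in X$ and still meets $\partial^-\Lambda_0$, so Lemma~\ref{lem:bond_pm} applies and yields $V_{\eta_1}\le 14J$. The descent it furnishes can be chosen so as to remove the $-1$ at $x_4$ (transporting it out of $\Lambda$ via the carpet), producing $\eta_2:=\eta_1^{(x_4;0)}$. Because $\eta_1(x)=+1$ and at most two of the three remaining neighbors of $x_4$ carry spin $-1$, one has $D_{x_4}(\eta_1)\le 2-1=1$, whence
\begin{equation*}
H(\eta_2)-H(\eta_1)=2D_{x_4}(\eta_1)J-\Delta^m\le 2J-\Delta^m=-(2J-(\lambda-h))<0.
\end{equation*}
Summing with the energy of $\eta_1$ gives $H(\eta_2)-H(\sigma)\le\Delta^p-\Delta^m=-2h<0$, so $\eta_2\in\mathcal{I}_\sigma$. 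The concatenated path $\sigma\to\eta_1\to\eta_2$ has excess height bounded by $\max\{\Delta^p+8J,\,(H(\eta_1)-H(\sigma))+14J\}=16J-(\lambda+h)<18J$, which gives $V_\sigma<18J$.

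The main obstacle is the case $\sigma(x_4)=-1$: Lemma~\ref{lem:bond_pm} provides a descent from $\eta_1$ with barrier at most $14J$, but a priori only guarantees that it reaches $\mathcal{I}_{\eta_1}$, whereas we need $\mathcal{I}_\sigma$. The hypothesis that $x_4$ has at most two $-1$ neighbors is precisely what forces $D_{x_4}(\eta_1)\le 1$, so that the gain $\Delta^m-2J$ from removing the minus at $x_4$ beats $H(\eta_1)-H(\sigma)$ by the positive margin $2h$. Checking the geometric feasibility of this specific carpet-based descent (so that it truly fits inside the $14J$ budget of Lemma~\ref{lem:bond_pm}) ultimately relies on the adjacency of $x_3\in X$ to the bond, which is automatic from the hypothesis.
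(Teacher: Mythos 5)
Your first case ($\sigma(x_4)\neq -1$) is correct and coincides with the paper's argument. The second case has a genuine gap, and it comes from the order in which you perform the two operations. The only zero spin among the neighbours of $x$ that the hypotheses guarantee is $x_3$, so the given zero-carpet reaches $x$ only through $x_3$; since $x_3$ and $x_4$ are both neighbours of $x$, they are at distance $\sqrt{2}$ or $2$ from each other and are never adjacent. Hence the only guaranteed zero site adjacent to $x_4$ is $x$ itself. Once you have placed the plus at $x$ to form $\eta_1$, there is no longer any zero-carpet of $\eta_1$ at distance one from $x_4$ reaching the boundary: the escape route for the minus is blocked. Consequently Lemma~\ref{lem:transport_zero}-(ii) does not apply to $x_4$ in $\eta_1$, and the step producing $\eta_2=\eta_1^{(x_4;0)}$ with barrier $14J$ is not available. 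Your appeal to Lemma~\ref{lem:bond_pm} does not repair this: that lemma's descent evacuates the spin of the $(+,-)$ bond that is adjacent to the carpet, which here is the plus at $x$ (adjacent to $x_3$), and removing it merely returns the energy to $H(\sigma)$ without reaching $\mathcal{I}_\sigma$; moreover, as you yourself note, the lemma only guarantees $\mathcal{I}_{\eta_1}$, and the specific descent you would need is exactly the one that is geometrically unavailable.

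The paper resolves this by reversing the order. Since $\sigma(x)=0$ and $x$ is adjacent to the carpet, $X\cup\{x\}$ is a zero-carpet of $\sigma$ intersecting the boundary and adjacent to $x_4$; applying Lemma~\ref{lem:transport_zero}-(ii) first gives $\xi=\sigma^{(x_4;0)}$ with $\Phi(\sigma,\xi)\le H(\sigma)+14J$ and $H(\xi)\le H(\sigma)+(\lambda-h)$ (using $D_{x_4}(\sigma)\le 2$ from the hypothesis on the minus neighbours), and only then is the plus transported to $x$ via Lemma~\ref{lem:transport_zero}-(i), yielding $H(\eta)\le H(\xi)-(\lambda+h)<H(\sigma)$. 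Your energy bookkeeping for the composite move is otherwise consistent with this (both routes end $2h$ below $H(\sigma)$), so the fix is simply to evacuate the minus through $x$ before occupying $x$.
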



\begin{lemma}\label{lem:shrink_rectangle}
Let $\sigma$ be a configuration that contains a cluster of pluses 
(resp.\ minuses) with at least a convex side with 
length $l \leq \lfloor \frac{2J}{\lambda+h} \rfloor$ (resp.\ $l \leq \lfloor \frac{2J}{\lambda-h} \rfloor$). 
Assume that 
there exists a zero-carpet of $\sigma$ at distance one from one 
of the two corner sites of the convex side of the cluster and intersecting 
$\partial^+\Lambda_0$. 
Then $V_\sigma<16J$.
\end{lemma}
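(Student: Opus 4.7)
The plan is to construct a reference path from $\sigma$ to a configuration of strictly lower energy by shrinking the short convex side, transporting its $l$ pluses (or minuses) out of $\Lambda$ one at a time along the adjacent zero-carpet. I describe the plus case explicitly; the minus case is identical after replacing $\lambda+h$ by $\lambda-h$ and $\Delta^p$ by $\Delta^m$.

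I would start by labelling the $l$ pluses of the convex side as $p_1,\dots,p_l$ so that $p_1$ is the corner adjacent to the zero-carpet $X_0$. A preliminary case split handles the possible presence of a minus in the neighborhood of a corner: since $p_1$ and $p_l$ sit at distance one from the carpet, any plus-minus bond incident to a corner is itself at distance one from the carpet, so Lemma \ref{lem:bond_pm} delivers $V_\sigma\leq 14J<16J$ directly. I may therefore assume the two corner pluses have no minus neighbors; moreover, minus spins touching the interior sites of the side only lower the $D_{p_i}$ values appearing below, hence the worst-case bookkeeping is carried out assuming all outer neighbors of the $p_i$ are zero.

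Then I would apply Lemma \ref{lem:transport_zero}(ii) iteratively to produce the path $\sigma=\sigma_0\to\sigma_1\to\dots\to\sigma_l$, where $\sigma_i := \sigma_{i-1}^{(p_i;0)}$. At step $i$ the site $p_i$ is a plus adjacent to the current zero-carpet $X_{i-1}$, obtained by enlarging $X_0$ with the sites $p_1,\dots,p_{i-1}$ emptied at earlier steps; since these lie on a straight segment at distance one from $X_0$, connectedness of $X_{i-1}$ and its intersection with $\partial^+\Lambda_0$ are preserved. The lemma then gives a height at most $14J$ above $H(\sigma_{i-1})$ and
\begin{equation}
H(\sigma_i)-H(\sigma_{i-1}) \;=\; 2\,D_{p_i}(\sigma_{i-1})\,J \;-\; \Delta^p .
\end{equation}
By the convex-corner structure, $D_{p_i}(\sigma_{i-1})\leq 2$ for $i<l$ (the two plus neighbors being $p_{i+1}$ on the side and the one inside the cluster), yielding an increment at most $\lambda+h$; at $i=l$ only the inside-cluster plus neighbor remains, so $D_{p_l}(\sigma_{l-1})=1$ and the increment equals $-(2J-\lambda-h)<0$.

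Summing, $H(\sigma_l)-H(\sigma)\leq l(\lambda+h)-2J$, which is strictly negative by $l\leq\lfloor 2J/(\lambda+h)\rfloor$ combined with the non-integer condition in \eqref{mod007c}, so $\sigma_l\in\mathcal{I}_\sigma$. For the height, the energy just before step $i$ is at most $H(\sigma)+(i-1)(\lambda+h)\leq H(\sigma)+(l-1)(\lambda+h)<H(\sigma)+2J-(\lambda+h)$; combined with the $14J$ bound on each step this gives $\Phi(\sigma,\sigma_l)-H(\sigma)<(l-1)(\lambda+h)+14J<16J$, whence $V_\sigma<16J$. The main obstacle will be the careful verification that, at each step, the enlarged zero-carpet $X_{i-1}$ still meets the hypotheses of Lemma \ref{lem:transport_zero}(ii) (connectedness and reach to the boundary), and that the local structure around $p_i$ really yields the stated $D_{p_i}$ value in the possibly complicated surrounding environment, including degenerate situations such as a cluster that is only one row thick (in which case $D_{p_i}=1$ for all $i$, giving an even more favorable estimate).
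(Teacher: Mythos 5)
Your proposal is correct and follows essentially the same route as the paper: the same shrinking path $\sigma_i=\sigma_{i-1}^{(p_i;0)}$ starting from the corner adjacent to the carpet, the same iterated use of Lemma \ref{lem:transport_zero}(ii) with increments $\le\lambda+h$ for the first $l-1$ detachments and $-2J+\lambda+h$ for the last, and the same height bound $(l-1)(\lambda+h)+14J<16J$. Your preliminary dispatch of nearby minus spins via Lemma \ref{lem:bond_pm} is a small addition the paper leaves implicit, but it does not change the argument.
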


\begin{lemma}\label{lem:grow_rectangle}
Let $\sigma$ be a configuration that contains a cluster of pluses 
(resp.\ minuses) with at least a convex side with 
length $l \geq \lfloor\frac{2J}{\lambda+h}\rfloor +1$ (resp.\ $l \geq \lfloor\frac{2J}{\lambda-h} \rfloor +1$) 
at distance strictly greater than two from a minus spin (resp.\ plus spin). 
Assume that  
there exists a zero-carpet of $\sigma$ at distance one from one 
of the two corner sites of the convex side of the cluster and intersecting 
$\partial^+\Lambda_0$. 
Then $V_\sigma<22J$.
\end{lemma}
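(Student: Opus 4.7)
The plan is to exhibit a path from $\sigma$ to a configuration of strictly lower energy, obtained by growing a complete row of $l$ pluses along the long convex side one at a time, each particle brought in from $\partial^+\Lambda_0$ through the zero-carpet. I treat the plus case; the minus case is symmetric, with $\Delta^p$ and $\lambda+h$ replaced by $\Delta^m$ and $\lambda-h$.

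After orthogonal reflections I assume the convex side consists of the sites $x_k=(k,T)$ for $k=1,\dots,l$ and that the given zero-carpet sits in the half-plane $y\ge T+1$. Set $y_k=(k,T+1)$ and define recursively $\sigma_0:=\sigma$ and $\sigma_k:=\sigma_{k-1}^{(y_k;+1)}$. Each transition $\sigma_{k-1}\to\sigma_k$ will be realised via Lemma~\ref{lem:transport_zero}(i), applied to a zero-carpet of $\sigma_{k-1}$ connecting $\partial^+\Lambda_0$ to $y_k$; such a carpet persists throughout the procedure because the distance-two hypothesis keeps the entire second layer $\{(i,T+2):1\le i\le l\}$ (and its extension around the cluster) free of minuses, hence full of zeros that remain available as a parallel corridor after $y_1,\dots,y_{k-1}$ have been converted into pluses. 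By Lemma~\ref{lem:transport_zero}(i), each transition has height at most $H(\sigma_{k-1})+18J$ and
\begin{equation*}
H(\sigma_k)=H(\sigma_{k-1})+\Delta^p-2D_{y_k}(\sigma_k)J.
\end{equation*}

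I then compute $D_{y_k}(\sigma_k)$ directly. Because no minus lies within distance two of the cluster, the only non-zero neighbours of $y_k$ in $\sigma_k$ are pluses. For $k=1$ the only such plus neighbour is the cluster corner $x_1$, so $D_{y_1}(\sigma_1)=1$ and the step contributes $\Delta^p-2J=2J-\lambda-h>0$. For $2\le k\le l$ the site $y_k$ has exactly the two plus neighbours $x_k$ (the original convex side) and $y_{k-1}$ (just placed), so $D_{y_k}(\sigma_k)=2$ and the step contributes $\Delta^p-4J=-(\lambda+h)<0$. Telescoping,
\begin{equation*}
H(\sigma_l)-H(\sigma)=(2J-\lambda-h)+(l-1)\bigl(-(\lambda+h)\bigr)=2J-l(\lambda+h).
\end{equation*}
Condition~\eqref{mod007c} and the hypothesis $l\ge\lfloor 2J/(\lambda+h)\rfloor+1$ yield $l(\lambda+h)>2J$, hence $H(\sigma_l)<H(\sigma)$ and $\sigma_l\in\mathcal{I}_\sigma$.

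It remains to bound the height of the whole path. For $j\ge 1$ one has $H(\sigma_j)=H(\sigma)+2J-j(\lambda+h)$, which is strictly decreasing in $j$, and $H(\sigma_1)>H(\sigma_0)=H(\sigma)$ since $\lambda+h<2J$ under \eqref{mod007a}. Hence $\max_{0\le j\le l-1}H(\sigma_j)=H(\sigma_1)$, and combining with the $18J$ per-step transport overhead gives
\begin{equation*}
\Phi(\sigma,\sigma_l)\le H(\sigma_1)+18J=H(\sigma)+20J-(\lambda+h)<H(\sigma)+22J,
\end{equation*}
so $V_\sigma<22J$, as required. The most delicate point of the argument will be verifying the persistence of a viable zero-carpet at each step $k$: once $y_1,\dots,y_{k-1}$ have been occupied by pluses, the initial carpet has been partially consumed, and one must exhibit an alternative zero route from $\partial^+\Lambda_0$ to $y_k$. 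This is precisely where the distance-two hypothesis does the work, by turning the row $\{(i,T+2)\}$ into an always-available corridor of zeros parallel to the convex side.
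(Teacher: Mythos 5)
Your path is the one the paper itself uses: fill the row of sites adjacent to the long convex side with $l$ pluses brought in one at a time from $\partial^+\Lambda_0$ through the zero-carpet, note that the first attachment costs $\Delta^p-2J=2J-(\lambda+h)$ while each subsequent one costs $\Delta^p-4J=-(\lambda+h)$, telescope to $H(\sigma_l)-H(\sigma)=2J-l(\lambda+h)<0$, and bound the height by $H(\sigma_1)+18J=H(\sigma)+20J-(\lambda+h)<H(\sigma)+22J$ using Lemma~\ref{lem:transport_zero}(i). All of this matches the paper's computation.

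There is, however, one genuine gap, and it sits exactly where you flagged the ``most delicate point''. From the hypothesis that no \emph{minus} lies within distance two of the convex side you conclude that the second layer $\{(i,T+2)\}$ is ``free of minuses, hence full of zeros''. The second half of that implication is false: the hypothesis says nothing about \emph{pluses}, and those sites may carry plus spins (a second cluster, or a protrusion of the same cluster wrapping around), in which case your parallel corridor of zeros does not exist and the exact values $D_{y_1}(\sigma_1)=1$, $D_{y_k}(\sigma_k)=2$ need not hold either. The paper removes this obstruction with a preliminary case split that you are missing: if some plus spin sits at distance strictly smaller than two from the convex side, it concludes directly by detaching that plus via Lemma~\ref{lem:transport_zero}(ii), and only in the complementary case --- where all sites at distance smaller than two from the side are genuinely zeros --- does it run the growth path. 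You should add that dichotomy. Note that extra plus neighbours of $y_k$ would only increase $D_{y_k}$ and make the energy drop faster, so the telescoping inequality $H(\sigma_l)\le H(\sigma)+2J-l(\lambda+h)$ survives as an upper bound; the real casualty of the false claim is the availability of a zero route from $\partial^+\Lambda_0$ to $y_k$ at every step, which is precisely what Lemma~\ref{lem:transport_zero}(i) needs as input.
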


We note that in the previous two lemmas, if the cluster is at distance one from $\partial^+ \Lambda_0$, then there exists a zero carpet, at distance one the two corner sites of the convex side, composed by only sites in $\partial^+ \Lambda_0$.

\begin{lemma}\label{lem:flag}
Let $\sigma$ be a configuration that contains a \emph{flag}-shaped structure namely a structure made of a strip of pluses, a strip of zeros and a strip of minuses 
with equal lengths as in the left or right panel of Figure \ref{fig:flag}.
Assume that there exists a minus-carpet of $\sigma$ at distance one from the strip of zeros, 
then $V_\sigma < 58 J$.
\end{lemma}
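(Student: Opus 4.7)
\emph{Plan.} The strategy is to use the minus-carpet as a transport channel in order to place a minus on a zero site of the strip that is adjacent to the plus strip; this simultaneously creates a $(+,-)$ bond and extends the minus-carpet so that it becomes nearest neighbor of the plus. Once this is achieved, Lemma~\ref{lem:bond_pm_different_phase} will provide the desired reduction below $H(\sigma)$, and combining the heights of the two paths will give the bound $V_\sigma<58J$.

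First I would pick a site $y$ in the zero strip that is a common nearest neighbor of a site in the plus strip and of a site of the minus-carpet; such a $y$ exists in either panel of Figure~\ref{fig:flag} because the three strips have equal length and because the minus-carpet lies at distance one from the zero strip by hypothesis. Applying Lemma~\ref{lemma:transport_scarpet}(ii) with $s=-1$ at this $y$ produces a path $\underline{\omega}$ from $\sigma$ to $\eta:=\sigma^{(y;-1)}$ satisfying $\Phi(\underline{\omega})-H(\sigma)\leq 35J$ and $H(\eta)=H(\sigma)+\Delta^m-2JD_y(\eta)$. Since the chosen $y$ has, in $\eta$, exactly one plus neighbor (from the plus strip), one minus neighbor (the carpet site that forced the choice of $y$), and two zero neighbors along the zero strip, one has $D_y(\eta)=0$ and therefore $H(\eta)=H(\sigma)+\Delta^m$.

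The configuration $\eta$ now contains a $(+,-)$ bond between $y$ and its plus neighbor, while its minus-carpet, which is the original minus-carpet augmented with the site $y$, lies at distance one from $\partial^+\Lambda_0$ (inherited from the original carpet) and at distance one from the plus endpoint of the bond. The hypotheses of Lemma~\ref{lem:bond_pm_different_phase} are thus satisfied for $\eta$ with $s=-1$, so $V_\eta\leq 54J$; in particular there exists $\zeta$ with $H(\zeta)<H(\eta)$ reachable from $\eta$ along a path of height at most $H(\eta)+54J$.

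Concatenating the two paths yields a path from $\sigma$ to $\zeta$ of height at most $\max\{H(\sigma)+35J,\ H(\eta)+54J\}=H(\sigma)+\Delta^m+54J$, and assumption \eqref{mod007a} forces $\Delta^m=4J-(\lambda-h)<4J$, so this height is strictly less than $H(\sigma)+58J$. The main obstacle, and the only point that really requires care, is to check that $H(\zeta)<H(\sigma)$ and not merely $H(\zeta)<H(\eta)$: this will be established by inspecting the mechanism behind Lemma~\ref{lem:bond_pm_different_phase}, which effectively evacuates the plus and minus of the $(+,-)$ bond through the two carpets and pays back at least $\Delta^p+\Delta^m+J$, comfortably dominating the $\Delta^m$ paid in the first step. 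A short case analysis distinguishing the two panels of Figure~\ref{fig:flag} — differing only in how the strips are stacked — completes the argument in each geometry with the same choice of $y$ and the same energy accounting.
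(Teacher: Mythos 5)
Your height accounting is fine (the concatenated path stays below $H(\sigma)+\Delta^m+54J<H(\sigma)+58J$ since $\Delta^m<4J$), but the energy accounting does not close, and this is a genuine gap rather than a detail to be checked. Setting $\eta=\sigma^{(y;-1)}$ costs $H(\eta)-H(\sigma)=\Delta^m=4J-(\lambda-h)$. Lemma~\ref{lem:bond_pm_different_phase} then only guarantees a configuration $\zeta$ with $H(\zeta)\le H(\eta)-2J+(\lambda+h)$: its proof resolves the $(+,-)$ bond by sending a \emph{single} spin of the bond to zero through the carpet, so the payback is $2J-(\lambda+h)$, not the ``$\Delta^p+\Delta^m+J$'' you assert. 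All you can conclude is $H(\zeta)\le H(\sigma)+\Delta^m-2J+(\lambda+h)=H(\sigma)+2J+2h$, which exceeds $H(\sigma)$; indeed a direct computation of your two moves (add a minus at $y_i$, then delete the adjacent plus $v_i$, which has two plus neighbours in the strip) gives exactly $H(\zeta)=H(\sigma)+2J+2h>H(\sigma)$. So no configuration below $H(\sigma)$ has been exhibited and nothing is proved about $V_\sigma$.

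There is also a structural reason why no $l$-independent local modification can work: whether the flag can be dismantled at an energy gain depends on the common length $l$ of the strips. The paper's proof splits into two regimes. For $l\ge\lfloor 2J/h+(\lambda-h)/h\rfloor+1$ it converts the whole zero strip into pluses and the whole minus strip (together with the two flanking minuses $z_1,z_2$) into zeros, alternating the two elementary operations so that each column after the first yields a net gain of $2h$ and the running height never accumulates; the total gain is of order $2lh-4J-2(\lambda-h)$, positive only for large $l$. For $l\le\lfloor 2J/h+(\lambda-h)/h\rfloor$ it does the opposite (pluses $\to$ zeros, zeros $\to$ minuses). In both regimes $O(l)$ particles are transported through the minus-carpet via Lemmas~\ref{lemma:transport_scarpet}, and the $58J$ bound comes from the interleaving, not from a single application of Lemma~\ref{lem:bond_pm_different_phase}. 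To repair your argument you would need to replace the one-shot reduction by such a length-dependent, two-step-downhill construction.
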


\begin{figure}[t]
        \centering
        \includegraphics[scale=0.5]{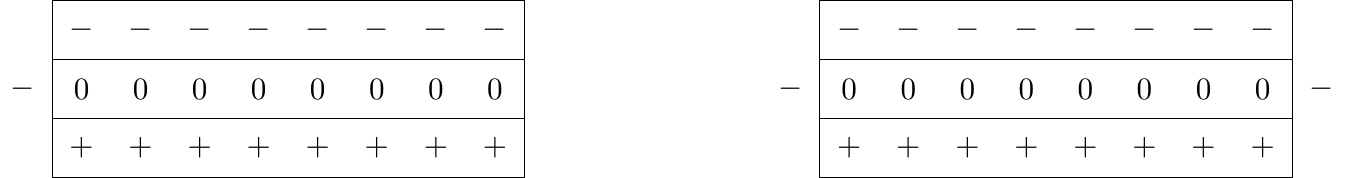}
        \caption{Representation of the flag-shaped structure present in the configuration considered in Lemma \ref{lem:flag}. We note that the strips can be vertical or horizontal.}
        \label{fig:flag}
    \end{figure}

\begin{lemma}\label{lemma:path_0_+}
We have that
$V_\zero \leq 6J-2(\lambda+h)+4J^2/(\lambda+h)$.
\end{lemma}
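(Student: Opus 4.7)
The plan is to exhibit an explicit path $\underline{\omega}$ from $\zero$ to an $\ell \times \ell$ plus-cluster configuration $\eta$ placed in a corner of $\Lambda_0$, where $\ell$ is an integer strictly larger than $4J/(\lambda+h)$. Condition \eqref{mod007c} guarantees $\ell \leq L$, so $\eta$ fits in $\Lambda_0$, and a direct count of $(+,+)$-bonds and pluses via \eqref{def:hamiltonian_BCK00} gives
$H(\eta) = 4J\ell - (\lambda+h)\ell^2 < 0 = H(\zero)$, so $\eta \in \mathcal{I}_\zero$. The statement then follows from the bound $\Phi(\underline{\omega}) \leq V^* := 6J - 2(\lambda+h) + 4J^2/(\lambda+h)$ on the height of $\underline{\omega}$.

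I would construct $\underline{\omega}$ by growing the cluster one site at a time, filling it row by row. Each elementary growth step consists of: (i) creating a plus at a site of $\partial^+\Lambda_0$ adjacent to the current zero region, at a cost of $\Delta^p = 4J-(\lambda+h)$ (case iii of the dynamics); (ii) transporting the plus by Kawasaki swaps through the zero carpet of $\Lambda_0$ along a route chosen so that it stays isolated from the current cluster until the very last step; (iii) a final swap attaching the plus to the cluster, which gains one or two $(+,+)$ bonds and hence lowers the energy by $2J$ or $4J$. Since an isolated plus surrounded by zeros contributes exactly $\Delta^p$ to the Hamiltonian, regardless of its position in $\Lambda$, the maximum energy reached during such a step is simply (energy of the current cluster) $+ \Delta^p$, attained throughout the transport phase (ii).

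It then remains to estimate the energy of the intermediate cluster configurations. By construction each is either an $a \times b$ rectangle of pluses or such a rectangle plus $k \geq 1$ extra pluses aligned along one of its sides. From \eqref{def:hamiltonian_BCK00} one obtains the energies
$E_{\mathrm{rect}}(a,b) := 2J(a+b) - (\lambda+h)\,ab$ and $E_{\mathrm{rect}}(a,b) + 2J - (\lambda+h)\,k$, respectively. Since $\lambda+h > 0$, the latter is decreasing in $k \geq 1$, so for fixed $(a,b)$ the maximum is attained at $k=1$ with value $E_{\mathrm{rect}}(a,b) + 2J - (\lambda+h)$. Unconstrained optimization over real $a,b \geq 0$ is achieved at $a = b = 2J/(\lambda+h)$, with maximal value $4J^2/(\lambda+h) + 2J - (\lambda+h)$. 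Adding the transport overhead $\Delta^p$ bounds every saddle of $\underline{\omega}$ by
\begin{equation*}
\bigl[4J^2/(\lambda+h) + 2J - (\lambda+h)\bigr] + \bigl[4J - (\lambda+h)\bigr] = 6J - 2(\lambda+h) + 4J^2/(\lambda+h) = V^*,
\end{equation*}
which yields $V_\zero \leq V^*$.

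The only point that will require explicit verification in the full write-up is that the growing cluster can always be bypassed by the transport in phase (ii), i.e.\ that the zero region of $\Lambda_0$ remains nearest-neighbor connected to $\partial^+\Lambda_0$ throughout the path; this is straightforward, because the cluster remains a rectangle with at most one partial attached protuberance row, tucked in a corner of $\Lambda_0$, whose side $L$ is by \eqref{mod007c} much larger than $\ell$. No appeal to the general transport estimate in Lemma \ref{lem:transport_zero} is needed, as it would give a strictly weaker bound here.
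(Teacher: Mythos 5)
Your proposal is correct and follows essentially the same route as the paper's proof: grow a plus droplet through squares and quasi-squares by creating pluses at $\partial^+\Lambda_0$ (cost $\Delta^p$ each), transporting them at constant energy through the sea of zeros, and attaching them with an energy gain of $2J$ or $4J$, so that the height of the path is governed by the critical rectangle-plus-protuberance-plus-free-particle configuration, whose energy is bounded by the unconstrained optimum $4J^2/(\lambda+h)+2J-(\lambda+h)+\Delta^p=V^*$. The only difference is that you stop at an $\ell\times\ell$ square of negative energy while the paper continues the reference path all the way to $\puno$; this is immaterial for bounding $V_\zero$, which only requires reaching some configuration in $\mathcal{I}_\zero$.
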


\subsection{Proof of Theorem~\ref{thm:out_gammaBCK}}
\par\noindent
In order to prove Theorem~\ref{thm:out_gammaBCK} 
we start showing that the stability level of 
each configuration $\eta \not \in \{\muno,\puno\}$ 
is smaller than or equal to $V^*$. 
First of all, we note that if $\eta$ contains some plus or minus spin in $\partial^+ \Lambda_0$, then $V_\eta=0$ indeed the particle with plus (resp. minus) spin leaves $\Lambda$ and the energy decreases by $\Delta^p$ (resp. $\Delta^m)$. 
Thus, from now on we assume $\eta_{\partial^+ \Lambda_0}=0_{\partial^+ \Lambda_0}$.
To prove that $V_\eta < V^*$, we partition the set of all configurations according to 
the value of the spin in the upper-left corner, that we denote by 
$x_0=(1,L)$, and we use 
the auxiliary lemmas \ref{lem:bond_pm}-\ref{lemma:path_0_+}. 
In the following the columns are ordered from left to right and 
the row from top to bottom. 

\paragraph{Case $\eta(x_0)=0$.} Let $R$ be the maximal rectangle 
with the upper-left corner in the $x_0$ and such that 
$\eta_R=0_R$. We note that if $R \equiv \Lambda_0$, i.e. 
$\eta \equiv \zero$, then we conclude by using Lemma \ref{lemma:path_0_+}. 
Thus, assume $R \not \equiv \Lambda_0$ and let $l_v,l_h$ be, respectovely, 
the vertical and horizontal side lengths of $R$. 

If $l_v,l_h \geq \lfloor \frac{2J}{\lambda+h} \rfloor+1$, then we reduce the energy 
of $\eta$ with an energy cost strictly smaller than $V^*$ by using 
the same path described in the proof of Lemma \ref{lemma:path_0_+}. 

Thus, assume that at least one of the two lengths (for instance $l_h$) 
is smaller than or equal to $\lfloor \frac{2J}{\lambda+h} \rfloor+1$. 
Since, $R$ is maximal it follows that 
there exists $x_2\in\{L-l_v+1,\dots,L\}$ such that, 
denoted $x=(l_h+1, x_2)$,
we have
$\eta(x)\neq0$ and 
$\eta(x+je_2)=0$ for $j=1,\dots,L-x_2$.  
We let, also,
$y=x-(m-1)e_2$, with $m\geq 1$ integer, be such 
that $\eta(x-je_2)=\eta(x)$ for every $0\le j \leq m-1$ 
and 
$\eta(y-e_2)\neq \eta(y)$.
We distinguish two cases, see Figure \ref{fig:2cases_P}:

\begin{figure}[t]
        \centering
        \includegraphics[scale=0.35]{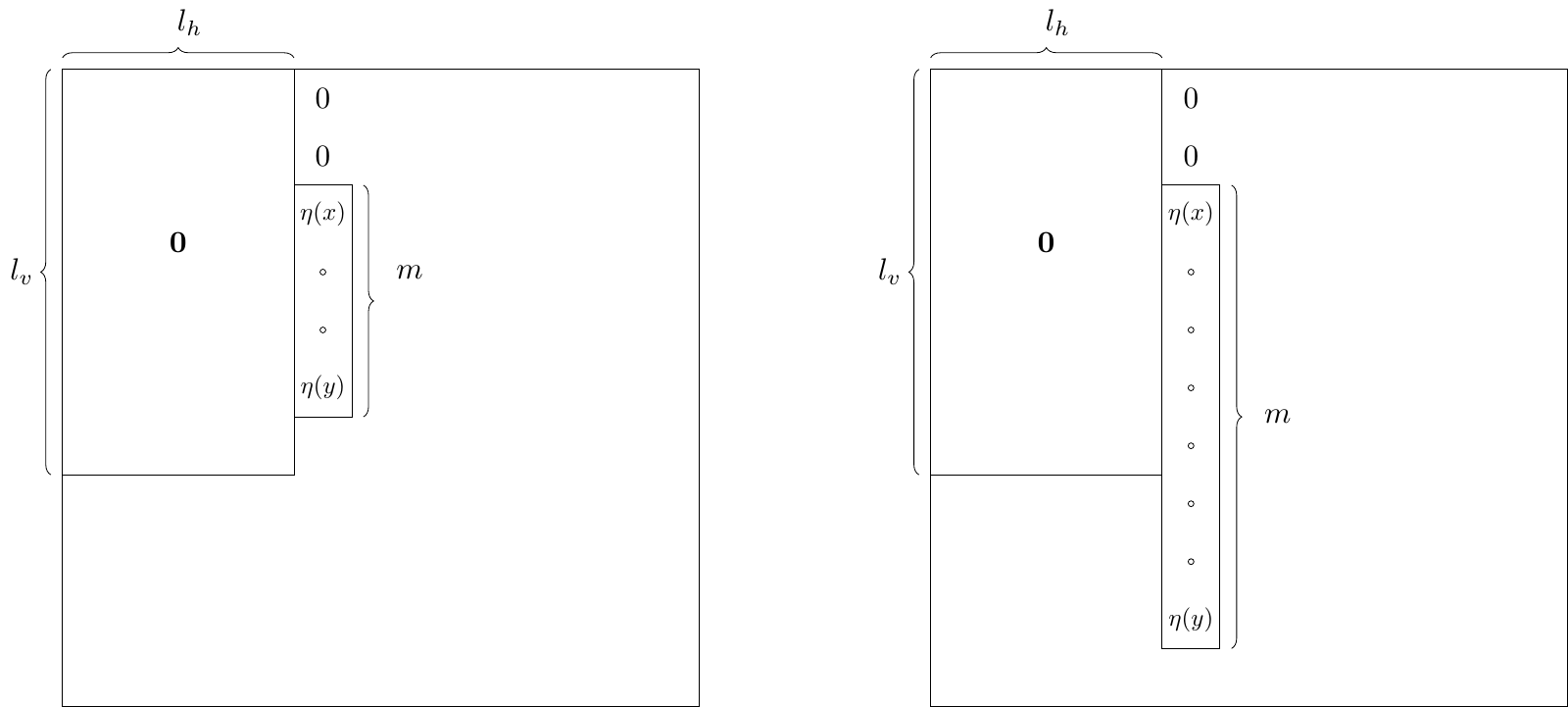}
        \caption{Representation of the configurations used in 
items (i) and (ii) of Case $\eta(x_0)=0$ 
in the proof of the Theorem~\ref{thm:out_gammaBCK}, 
respectively, on the left and on the right.}
        \label{fig:2cases_P}
    \end{figure}
    
\begin{itemize}
\item[(i)] Case $x_2-m \geq L-l_v+1$: 
if $m \leq \lfloor \frac{2J}{\lambda \pm h} \rfloor$, 
we consider the configuration in which the spins in 
$x-je_2$, with $j=0,\dots,m-1$, are changed to $0$, and note that,
by Lemma \ref{lem:shrink_rectangle}, such a configuration
has 
energy smaller that the initial one and the 
communication height is smaller than $10J$ which, in turn,
is smaller than $V^*$.
On the other hand, 
if $m \geq \lfloor \frac{2J}{\lambda \pm h} \rfloor+1$, 
we consider the configuration in which the zeroes at sites 
$(l_h,x_2-j)$ with $j=0,\dots,m-1$ are changed to $\eta(x)$ 
and the theorem follows from 
Lemma~\ref{lem:grow_rectangle}.

\item[(ii)] Case $x_2-m \leq L-l_v$, 
and assume that 
there exists a site $z=(l_h,z_2)$ for 
some $x_2-(m-1)-1\leq z_2 \leq L-l_v$ 
such that $\eta(z) \neq 0$ and 
$\eta(l_h,a)=0$ for $a\geq z_2+1$,
see Figure \ref{fig:origin_zero}.
We distinguish two cases: 
\begin{itemize}
\item[(ii.a)] 
$\eta(z)=-\eta(x)$. We conclude by applying Lemma \ref{lem:bond_pm}.
\item[(ii.b)] 
$\eta(z)=\eta(x)$. We consider the site $w=z+e_2$ and assume 
that $\eta(w-e_1) \neq -\eta(x)$.
If necessary, we add a particle with spin $\eta(x)$ in $\partial^+\Lambda_0$, 
by paying either $\Delta^p$ or $\Delta^m$. Then we 
transport the particle trough the zero-carpet to $w$, by paying $8J$, 
see Lemma~\ref{thm:grafo_pesato}.
By direct inspection we also get that the difference 
of energy between the final and the initial configurations 
is smaller than or equal to 
$-(\lambda \pm h)$. 
On the other hand, in the case $\eta(w-e_1) = -\eta(x)$, we conclude 
by applying Lemma \ref{lem:bond_pm}.

\end{itemize} 

\begin{figure}[!htb]
        \centering
        \includegraphics[scale=0.32]{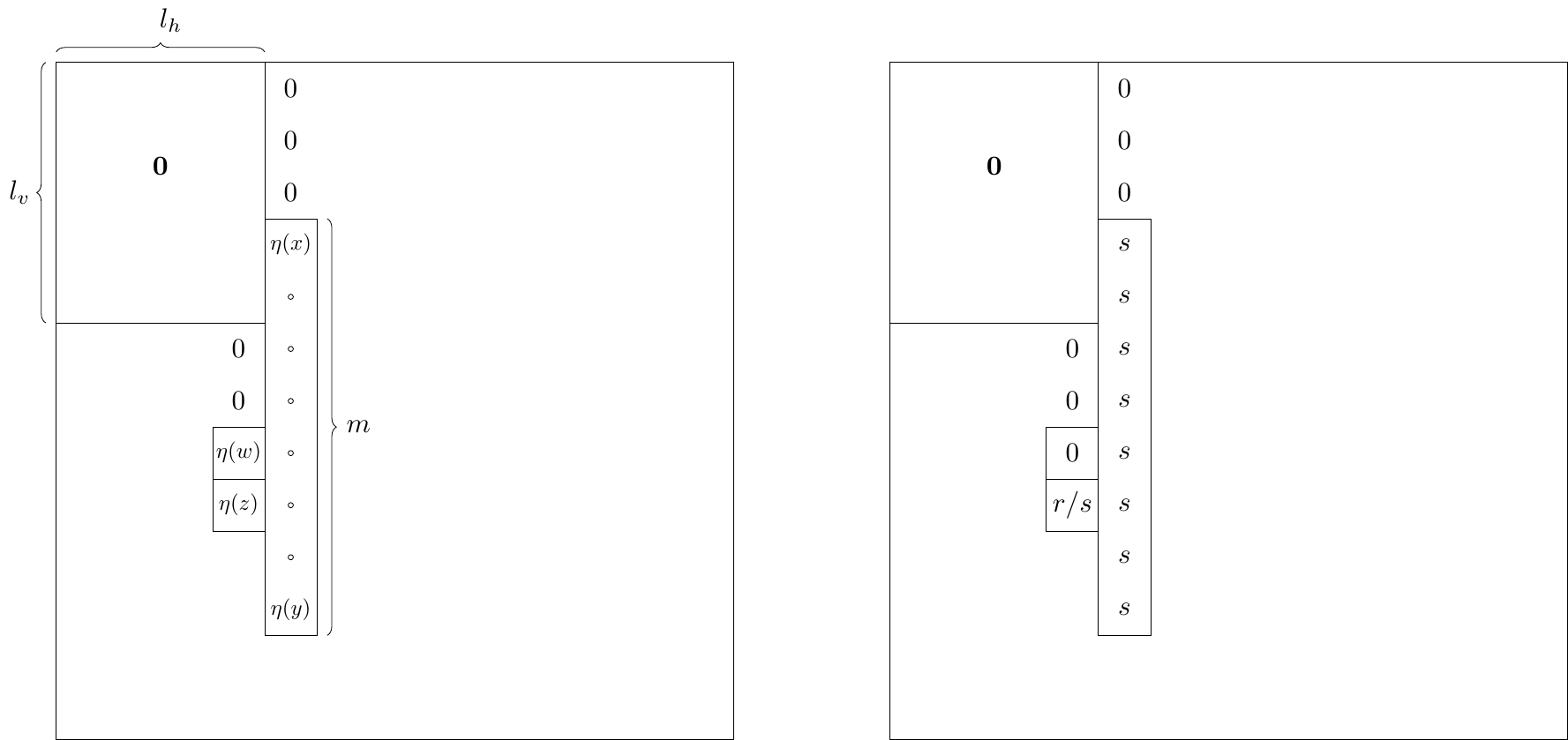}
        \caption{Representation of the configurations in items (ii.a) 
and (ii.b) 
of Case $\eta(x_0)=0$
in the proof of the Theorem~\ref{thm:out_gammaBCK}, respectively,
on the left and on the right.}
        \label{fig:origin_zero}
\end{figure}

\item[(iii)] Case $x_2-m \leq L-l_v$, 
and assume that 
all the spins associated with the sites $(l_h,a)$ with $a\ge x_2-(m-1)-1$ 
are zero. 

In the case 
$m \leq \lfloor \frac{2J}{\lambda \pm h} \rfloor$, we proceed exactly as in case (i) above.

\begin{figure}[!htb]
        \centering
        \includegraphics[scale=0.3]{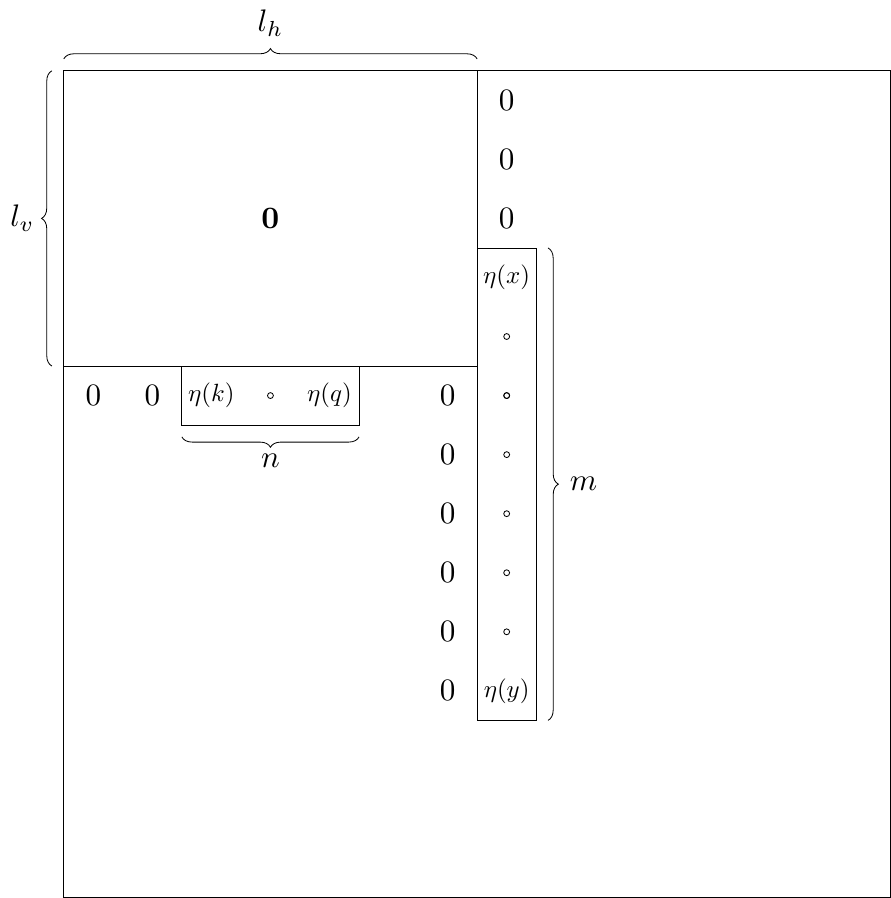} \qquad \includegraphics[scale=0.3]{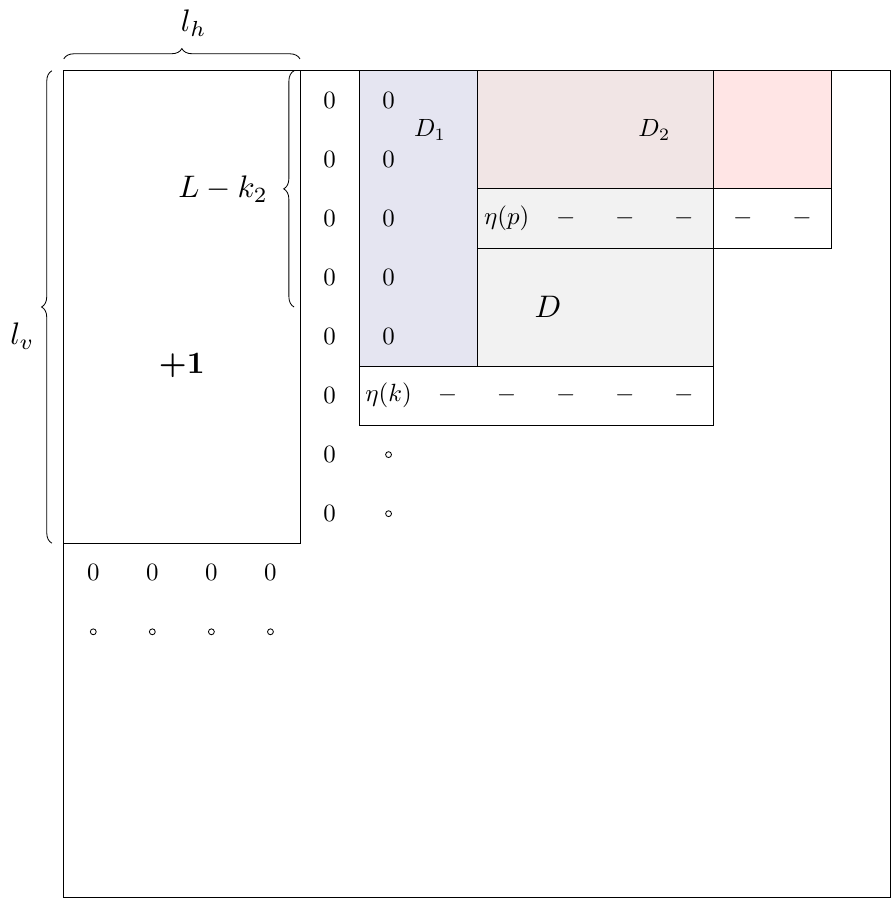}
        \caption{Left: Representation of the configuration used in case (iii)
of Case $\eta(x_0)=0$ in the proof of the Theorem~\ref{thm:out_gammaBCK}.
Right: An example of configuration $\eta$ of point (ii)
in Case $\eta(x_0)=+1$.}
        \label{fig:K_Ktilde}
    \end{figure}

On the other hand, 
in the case $m \geq \lfloor \frac{2J}{\lambda \pm h} \rfloor+1$, we have to look at the 
spins in the column $l_h-1$. 
If all the sites $(l_h-1,j)$ with $j\geq x_2-(m-1)$ have spin zero, 
then we get the proof again as in case (i) above. 
Otherwise, namely, if at least one of the spins associated 
with the sites 
$(l_h-1,j)$ with $j\geq x_2-(m-1)$ is not zero,
then we have to look at 
the sites along the row $L-l_v$. 
Since $R$ is maximal, there exists a site 
$k=(k_1,L-l_v)$ such that $\eta(k)\neq0$ and $\eta(j,L-l_v)=0$ for 
$j=1,\dots,k_1-1$. 
Let $q=k+n e_1$, with $n\geq0$ integer, be such that 
$\eta(k+je_1)=\eta(k)$ for $0\leq j \leq n$ and $\eta(k)\neq\eta(q+e_1)$, 
see the left panel in Figure \ref{fig:K_Ktilde}. 
If $n \leq \lfloor \frac{2J}{\lambda \pm h} \rfloor$ we conclude by 
using Lemma~\ref{lem:shrink_rectangle} and if $n \geq \lfloor \frac{2J}{\lambda \pm h} \rfloor+1$ 
we conclude by using Lemma \ref{lem:grow_rectangle}.

\end{itemize}

\paragraph{Case $\eta(x_0)=+1$.} Let $R$ be the maximal rectangle 
with the upper-left corner in the $x_0$ and such that 
$\eta_R=+1_R$. We note that $R \not \equiv \Lambda_0$, otherwise 
$\eta \equiv \puno$. 

Thus, let $l_v,l_h$ be, respectively, 
the vertical and horizontal side lengths of $R$. 
Since $R$ is maximal, it follows that 
there exists a spin different from plus at distance one from $R$.

In case it is a minus, 
we conclude by 
applying Lemma \ref{lem:bond_pm}, 
if the associated site is in $\partial^- \Lambda_0$, 
otherwise we conclude by applying Lemma \ref{lem:bond_pm_different_phase}.

Now, we consider the case in which all the spins at distance one 
from $R$ are pluses and zeros. 
Without loss of generality, we can assume that there 
exists a site on the vertical part of the external boundary of $R$
with associates spin equal to zero. Thus, we 
let $x_2\in\{L-l_v+1,\dots,L\}$ be such that, 
denoted $x=(l_h+1, x_2)$,
we have $\eta(x)=0$
and $\eta(x+je_2)=+1$ for $j=1,\dots,L-x_2$. 

We note that if $x \not \in \partial^- \Lambda_0$, 
see the left panel in Figure~\ref{fig:case_plus_simply},
then we apply Lemma~\ref{lemma:transport_scarpet} or Lemma~\ref{lem:p_distance_2_0carpet} and we conclude. 

\begin{figure}[t]
        \centering
        \includegraphics[scale=0.4]{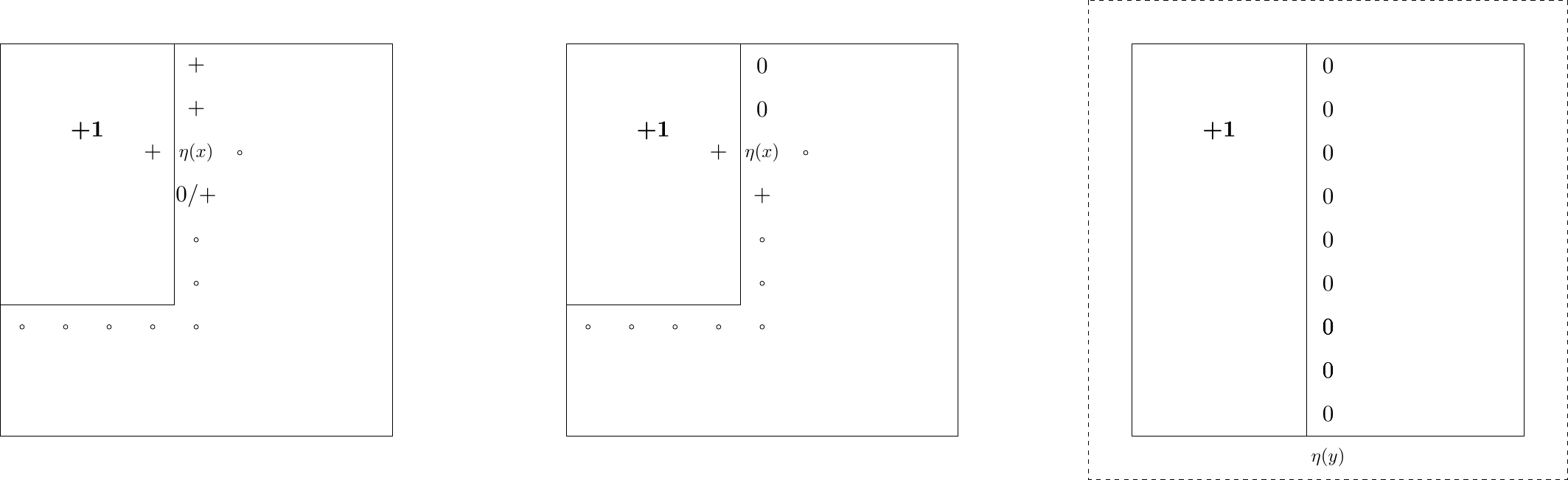}
        \caption{Representation of the configuration used in the 
case $\eta(x_0)=+1$ in the proof of Theorem~\ref{thm:out_gammaBCK}.}
        \label{fig:case_plus_simply}
    \end{figure}

We are thus left with the case 
$x\in \partial^- \Lambda_0$, i.e., $x_2=L$.
In such a case 
we let $y=x-(m-1)e_2$, with $m\geq 1$ integer, be such 
that $\eta(x-je_2)=\eta(x)$ for every $0\le j \leq m-1$ 
and either $y-e_2\in\partial^+\Lambda_0$ or $\eta(y-e_2)\neq \eta(y)$,
see the center and right panel in Figure~\ref{fig:case_plus_simply}.
We distinguish two cases:

\begin{itemize}
\item[(i)] Case $m \leq l_v-1$: 
we can conclude since 
$\eta$ satisfies the assumption 
of Lemma \ref{lem:transport_zero}-(i), indeed, 
$\eta(y)=0$, $\eta(y-e_1)=+1$ (inside $R$), 
$\eta(y-e_2)=+1$ (otherwise there would be a minus in $\partial^+R$), 
and if $\eta(y+e_1)$ were minus, then it would not have 
three neighboring minuses because in such a case there would be 
a direct $(+,-)$ interface in the sites $y-e_2$ and $y+e_1-e_2$.
See the center panel in Figure~\ref{fig:case_plus_simply}.

\item[(ii)] Case $m \geq l_v$. 
If $l_v \leq \lfloor \frac{2J}{\lambda + h} \rfloor$, 
then we reduce the energy 
of $\eta$ with an energy cost strictly smaller than $V^*$ by using 
Lemma~\ref{lem:shrink_rectangle}. 
If $l_v \geq \lfloor \frac{2J+\lambda-h}{2h} \rfloor+1$, 
we conclude by using Lemma~\ref{lem:flag}. 

We are thus left with the case 
$\lfloor \frac{2J+\lambda-h}{2h} 
  \rfloor \geq l_v \geq \lfloor \frac{2J}{\lambda + h} \rfloor+1$ 
with which we deal by looking at the value of the spins 
at distance $\sqrt{2}$ and $2$ from the side of $R$ with length $l_h$.

If one of these spins is plus we apply 
Lemma~\ref{lem:transport_zero}-(i)
and if they are all zeros we conclude by applying 
Lemma~\ref{lem:grow_rectangle}.
We are left with the case in which 
these spins are either zero or minus and 
at least one of them is minus, 
see Figure~\ref{fig:origin_plus_norectangle}. 
We distinguish two cases:

\begin{figure}[t]
        \centering
        \includegraphics[scale=0.4]{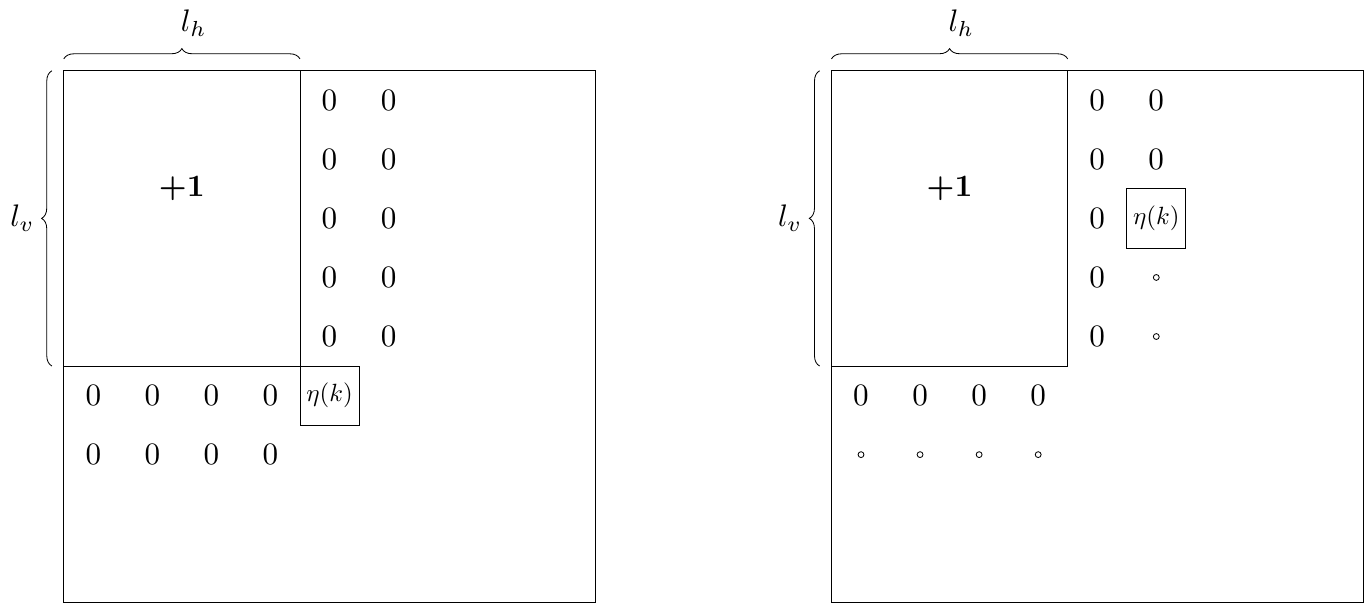}
        \caption{Representation of the configuration used in the 
case $\eta(x_0)=+1$ in the proof of Theorem~\ref{thm:out_gammaBCK}.}
        \label{fig:origin_plus_norectangle}
    \end{figure}

\begin{itemize}
\item[(ii.a)] all these spins are zeros excepted the 
one at distance $\sqrt{2}$ which has spin minus,
see the left panel in Figure~\ref{fig:origin_plus_norectangle}. 

We note that if this minus has three minus nearest neighbours, 
then one of them is at distance one from $R$ and so we conclude by 
applying Lemma \ref{lem:bond_pm_different_phase}. 

Thus, we assume that the minus has at most two nearest neighbors 
with minus spin.
Let $\eta'$ be the configuration obtained from $\eta$ by replacing the 
minus at distance $\sqrt{2}$ with a zero, 
and let $\eta''$ be the configuration obtained from $\eta'$ 
by replacing with plus 
the zeros at distance one from the side of the rectangle 
with length $l_v$, 
see Figure~\ref{fig:origin_plus_caseA}. 
We consider the following path which starts from $\eta$ and ends in $\eta''$ 
crossing $\eta'$: the minus is transported along the (vertical in 
the figure) column adjacent to $R$ and, after it reaches 
the external boundary $\partial^+\Lambda_0$, it is removed. 
Then, one after the other, $l_v$ pluses are created in 
$\partial^+\Lambda_0$ and transported along the same column to 
their final location as reported in 
panel of Figure~\ref{fig:origin_plus_caseA}.
We have that $H(\eta')=H(\eta)+(\lambda-h)$ and $H(\eta'')
=H(\eta')+2J-l_v(\lambda+h)$. Then, we obtain
$H(\eta'') \leq H(\eta)+(\lambda-h)+2J
-\Big( \frac{2J}{\lambda+h}+1\Big ) (\lambda+h)<H(\eta)$.
Moreover, by direct inspection, we have that the height along the 
considered path is smaller than or equal to $6J$.

\begin{figure}[t]
\centering
\includegraphics[scale=0.4]{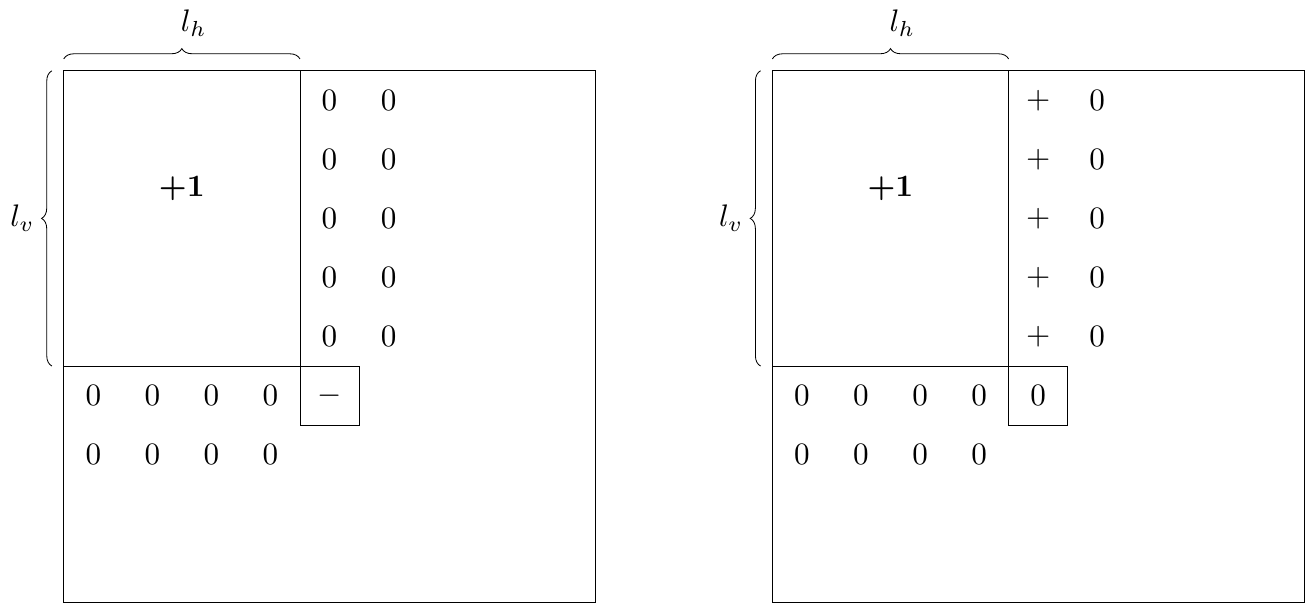}
\caption{On the left the initial configuration $\eta$ of point (i)
in Case $\eta(x_0)=+1$. On the right the final configuration $\eta''$.}
\label{fig:origin_plus_caseA}
\end{figure}

\item[(ii.b)] There is at least a minus spin 
at distance $2$ from the side of $R$ with length $l_v$.

We denote by $k=(l_h+2,k_2)$ the site with minus spin and at minimal distance 
from $\partial^+\Lambda_0$, 
see the right panel in Figure \ref{fig:origin_plus_norectangle}. 

If $L-k_2 \geq \lfloor \frac{2J}{\lambda+h} \rfloor+1$, then we 
replace with pluses the first (from the top) $L-k_2$ zeroes 
in the vertical column adjacent to $R$, 
by applying Lemma \ref{lem:grow_rectangle} we prove the theorem.

On the contrary, suppose, now, 
that $L-k_2 \leq \lfloor \frac{2J}{\lambda+h} \rfloor$. 

We consider the rectangular region $D$ of $\Lambda_0$ with the upper-left corner in $(l_h+2, L)$ and the down-right corner in $k+ne_1$ where $n \in \{l_h+2,...,L\}$ is such that $\eta(k+je_1)=\eta(k)$ for every $0\leq j \leq n$ 
and $\eta(k+ne_1)\neq \eta(k)$, see the right panel in Figure \ref{fig:K_Ktilde}.
Notice that the horizontal strip from $k$ to $k+ne_1$ contained in $D$
is a strip of minuses by construction. 
Let $p \in D$ be the site with the minus spin according to the lexicographic order. We note that $p$ can be $k$ by construction. 
We consider the rectangular region $D_1 \subset D$ with the upper-left corner in $(l_h+1, L)$ and the down-left corner in 
$(p_1-1,k_1+1)$. We observe that by construction in $D_1$ there are not minus spins, see the right panel in Figure \ref{fig:K_Ktilde}.
If there is a plus spin at distance one from the strip of minuses containing $k$ 
in the rectangular region $D_1$, then we conclude by applying Lemma \ref{lem:bond_pm}.
Thus, we are left with the case in which there are only zeros at distance one from this strip.
Assume, first, that in $D_1$ there is a cluster of pluses,
then it necessarily has a convex side length smaller 
than $\lfloor \frac{2J}{\lambda+h} \rfloor$. Then, we conclude by applying 
Lemma~\ref{lem:grow_rectangle}. 
Thus, in $D_1$ there are only zero spins. 
and we consider the rectangular region $D_2$ 
with the upper-left corner in $(p_1,L)$ and the down left corner in $p+ae_1$
where $a \in \{l_h+2,...,L\}$ is such that $\eta(p+ae_1)=\eta(k)$ for every $0\leq j \leq a$ 
and $\eta(p+ae_1)\neq \eta(p)$, see the right panel in Figure \ref{fig:K_Ktilde}. 

Assume $D_2 \neq \emptyset$.
Notice that the horizontal strip from $p$ to $p+ae_1$ is a strip of minuses by construction. 
If there is a plus spin at distance one from this strip, then we conclude by applying Lemma \ref{lem:bond_pm}.
We observe that by construction in $D_2$ there are not minus spins and we
assume, first, that in $D_2$ there is a cluster of pluses.
This cluster necessarily has a convex side length smaller 
than $\lfloor \frac{2J}{\lambda+h} \rfloor$ by construction,
thus we conclude by applying Lemma~\ref{lem:grow_rectangle}. 
Thus, in $D_2$ there are only zeros and we consider the length $a$ of the strip of minuses 
containing $p$.

If $a\leq \lfloor \frac{2J}{\lambda-h} \rfloor$
we conclude by applying Lemma \ref{lem:shrink_rectangle}. 

Otherwise if $a \geq \lfloor \frac{2J}{\lambda-h} \rfloor +1$, we distinguish two cases:
\begin{itemize}
    \item[(ii.b.1)] The site at distance $\sqrt{2}$ from the strip of minus has spin different from plus. In this case, we apply Lemma \ref{lem:grow_rectangle} and we prove the Theorem.
    \item[(ii.b.2)] The site at distance $\sqrt{2}$ from the strip of minus has plus spin. We denote by $z$ this site and we note that
    if $\eta(z-e_2)=+1$, then there is a direct $(+,-)$ interface in the sites $z-e_2$ and $z-e_1-e_2$. Thus, we conclude by applying Lemma \ref{lem:transport_zero}-(ii) to the site $z-e_1-e_2$.
    Otherwise if $\eta(z-e_2)=0$, then we can conclude by applying Lemma \ref{lem:shrink_rectangle} to the cluster of pluses containing $z$. 
\end{itemize}

If $D_2= \emptyset$, i.e. $p \equiv k$, then we proceed as above by applying the same procedure to $D_1$.

\end{itemize}
\end{itemize}

\paragraph{Case $\eta(x_0)=-1$.} Let $R$ be the maximal rectangle 
with the upper-left corner in the $x_0$ and such that 
$\eta_R=-1_R$. We note that $R \not \equiv \Lambda_0$, otherwise 
$\eta \equiv \muno$. 

Thus, let $l_v,l_h$ be, respectively, 
the vertical and horizontal side lengths of $R$. 
Since $R$ is maximal, it follows that 
there exists a spin different from minus at distance one from $R$.

In case it is a plus, 
we conclude by 
applying Lemma \ref{lem:bond_pm}
if the associated site is in $\partial^- \Lambda_0$, 
otherwise we conclude by applying Lemma \ref{lem:bond_pm_different_phase}.

Then, we consider the case in which all the spins at distance one 
from $R$ are minuses and zeros. 
Without loss of generality, we can assume that there 
exists a site on the vertical part of the external boundary of $R$
with an associated spin equal to zero. Thus, we 
let $x_2\in\{L-l_v+1,\dots,L\}$ be such that, 
denoted $x=(l_h+1, x_2)$,
we have $\eta(x)=0$
and $\eta(x+je_2)=-1$ for $j=1,\dots,L-x_2$. 

We distinguish two cases:
\begin{itemize}
\item[(i)] Case $x_2 \not = L$. 
We consider the cluster of minuses $C$ containing $R$. 
If $\partial^+ C$ contains a plus spin, then we conclude by using Lemma \ref{lem:bond_pm_different_phase}.
Thus, suppose that $\partial^+ C$ contains only zero spins.
Either $\eta \equiv \muno$ or we denote by $p=(p_1,p_2)$ with $p_1, p_2 \in \{1,...,L \}$ the first site such that $\eta(p)=0$
in lexicographic order.
If $p_2=L$, then we proceed as in case (ii) below assuming $p \equiv x$. 
Otherwise if $p_2=L-j$ with $L-1 \geq j>0$, then all spins with vertical coordinate $L, L-1, ..., L-(j-1)$ are minus and we look at sites $p+e_1$ and $p-e_2$.
If $\eta(p+e_1)=-1$ or $\eta(p-e_2)=-1$, then we conclude 
by applying Lemma~\ref{lemma:transport_scarpet}. 
If $\eta(p+e_1)=\eta(p-e_2)=0$, then we prove the statement 
by using Lemma \ref{lemma:transport_scarpet}. 
We are thus left with the case $\eta(p+e_1)=0$ and $\eta(p-e_2)=+1$, indeed we recall that $p+e_1 \in \partial^+ \Lambda_0$ or $p+e_1 \in \partial^+ C$, so $\eta(p+e_1) \neq +1$.
We consider the strip of pluses containing $p-e_2$ and we observe that if there exists a site at distance one from this strip with vertical coordinate $p_2+1$ with spin different from zero, then we can apply Lemma \ref{lem:bond_pm_different_phase} and we conclude. 
Thus, all the sites at distance one from this strip with vertical coordinate $p_2+1$ are zero and we prove the statement by using Lemma \ref{lem:flag}. 

\item[(ii)] Case $x_2 = L$.
In such a case 
we let $y=x-(m-1)e_2$, with $m\geq 1$ integer, be such 
that $\eta(x-je_2)=\eta(x)$ for every $0\le j \leq m-1$ 
and either $y-e_2\in\partial^+\Lambda_0$ or $\eta(y-e_2)\neq \eta(y)$.
We distinguish two cases:

\begin{itemize}
\item[(ii.a)] Case $m \leq l_v-1$. 
Assume first $\eta(y+e_1) \neq +1$, then
we can conclude since 
$\eta$ satisfies the assumption 
of Lemma \ref{lem:transport_zero}-(i), indeed, 
$\eta(y)=0$, $\eta(y-e_1)=-1$ (inside $R$), 
$\eta(y-e_2)=-1$ (otherwise there would be a plus in $\partial^+R$).

Now, suppose that $\eta(y+e_1) = +1$ and let $w=y+e_1+ne_2$ with $n$ be such 
that $\eta(y+e_1+je_2)=\eta(y+e_1)$ for every $0\le j \leq m-1$ 
and either $w+e_2\in\partial^+\Lambda_0$ or $\eta(w+e_2)\neq \eta(y+e_1)$.

If $L-(y_2+ \tilde j)-1\geq \lfloor \frac{2J}{\lambda-h} \rfloor+1$, then we apply
Lemma \ref{lem:grow_rectangle} to the part of the side of $R$ with length $L-(y_2+ \tilde j)-1$ 
and we prove the theorem.

On the contrary, if $L-(y_2+ \tilde j)-1 \leq \lfloor \frac{2J}{\lambda-h} \rfloor$, 
we consider the rectangle of $\Lambda_0$ with the upper-left corner 
in $(l_h+2, L)$ with side lengths $L-(y_2+ \tilde j)$ and $l_s$, 
where $l_s$ is the horizontal length of the strip of pluses
containing $\eta(w)$.
If inside of this rectangle there is a minus spin at distance one from this strip of pluses, 
then we conclude by applying Lemma \ref{lem:bond_pm}.
Thus, assume that inside the rectangle 
there are only zeros and pluses at distance one from this strip and 
assume, first, that
in the rectangular region there is a cluster of minuses,
then it necessarily has a convex side length smaller 
than $L-(y_2+ \tilde j)-1 \leq \lfloor \frac{2J}{\lambda-h} \rfloor$. Then, we conclude by applying 
Lemma~\ref{lem:grow_rectangle}. 

Now, assume that 
in this rectangular region there is a mixture of pluses and zeros. 
If the cluster of pluses containing $\eta(w)$ has a concave side or 
there are some plus spins at distance two from it, then we 
conclude by applying Lemma~\ref{lem:transport_zero}-(i). 
In the other case, we consider the length $n$. 

If $l_s \leq \lfloor \frac{2J}{\lambda+h} \rfloor$
we conclude by applying Lemma \ref{lem:shrink_rectangle}. 

Otherwise if $l_s \geq \lfloor \frac{2J}{\lambda+h} \rfloor +1$, we distinguish two cases:
\begin{itemize}
    \item[(ii.a.1)] The site $z$ at distance $\sqrt{2}$ from the strip of pluses and at distance one from the rectangle $S$ has spin different from minus. In this case, we apply Lemma \ref{lem:grow_rectangle} and we prove the Theorem. 
    \item[(ii.a.2)] The site $z$ at distance $\sqrt{2}$ from the strip of pluses and at distance one from the rectangle $S$ has minus spin. We conclude as in case $\eta(x_0)=+1$  (ii.a). 
\end{itemize}

\item[(ii.b)] Case $m \geq l_v$. 
If $l_v \leq \lfloor \frac{2J}{\lambda - h} \rfloor$, 
then we reduce the energy 
of $\eta$ with an energy cost strictly smaller than $V^*$ by using 
Lemma~\ref{lem:shrink_rectangle}. 
We are thus left with the case 
$l_v \geq \lfloor \frac{2J}{\lambda - h} \rfloor+1$ 
with which we deal by looking at the value of the spins 
at distance $\sqrt{2}$ and $2$ from the vertical side of $R$.

Assume first that the spin at distance $\sqrt{2}$ is different from plus.
If all the sites at distance 2 have zero spin, then we apply 
Lemma~\ref{lem:grow_rectangle}. 
If one of the spins at distance 2 is minus we apply 
Lemma~\ref{lem:transport_zero}-(i).

Now, we suppose that the spin at distance $\sqrt{2}$ is plus.
If there is a minus spin in a site $d=(l_h+2, d_2)$ such that $d_2 \neq L-l_v-1$, then we apply Lemma \ref{lem:transport_zero}-(i). 
Otherwise, if $d_2=L-l_v-1$ then we analyze the spins at the right and the left of the site $d-e_1-e_2$, i.e. the site of the plus spin at distance $\sqrt{2}$ from the rectangle.
If one of them is plus, then we apply Lemma \ref{lem:transport_zero} at the minus at site $d$ or $d-2e_1$. 
Otherwise, we apply the Lemma \ref{lem:transport_zero} at the plus in  $d-e_1-e_2$,. 

We are left with the case in which 
spins are either zero or plus at distance 2 and $\sqrt{2}$ and 
at least one of them is plus.
We distinguish two cases:

\begin{itemize}
\item[(ii.b.1)] all these spins are zeros excepted the 
one at distance $\sqrt{2}$ which has spin plus. We denote by $q=(l_h+1, L-l_v)$ the corresponding site.
If $l_v-1 \geq \lfloor \frac{2J}{\lambda-h} \rfloor+1$, then we 
apply Lemma \ref{lem:grow_rectangle} and we prove the theorem.

On the contrary, suppose  
that $l_v-1 \leq \lfloor \frac{2J}{\lambda-h} \rfloor$. 

We consider the rectangular region $D$ of $\Lambda_0$ with the upper-left corner in $(l_h+1, L)$ and the down-right corner in $q+ne_1$ where $n \in \{l_h+2,...,L\}$ is such that $\eta(q+je_1)=\eta(q)$ for every $0\leq j \leq n$ 
and $\eta(q+ne_1)\neq \eta(q)$.
Notice that the horizontal strip from $q$ to $n$ contained in $D$
is a strip of pluses by construction. 
Let $p \in D$ be the site with the first plus spin according to the lexicographic order. We note that $p$ can be $q$ by construction. We consider the rectangular region $D_1 \subset D$ with the upper-left corner in $(l_h+1, L)$ and the down-left corner in 
$(p_1-1, q_1+1)$. We observe that by construction in $D_1$ there are not plus spins.
If there is a minus spin at distance one from the strip of pluses containing $q$ 
in the rectangular region $D_1$, then we conclude by applying Lemma \ref{lem:bond_pm}.
Thus, we are left with the case in which there are only zeros at distance one from this strip.
Assume, first, that in $D_1$ there is a cluster of minuses,
then it necessarily has a convex side length smaller 
than $l_v-1 \leq \lfloor \frac{2J}{\lambda-h} \rfloor$. Then, we conclude by applying 
Lemma~\ref{lem:grow_rectangle}. 
Thus, in $D_1$ there are only zero spins 
and we consider the rectangular region $D_2$ 
with the upper-left corner in $(p_1,L)$ and the down left corner in $p+ae_1$
where $a \in \{p_1,...,L\}$ is such that $\eta(p+ae_1)=\eta(p)$ for every $0\leq j \leq a$ 
and $\eta(p+ae_1)\neq \eta(p)$.
Assume $D_2 \neq \emptyset$.
Notice that the horizontal strip from $p$ to $p+ae_1$ is a strip of pluses by construction. 
If there is a minus spin at distance one from this strip, 
then we conclude by applying Lemma \ref{lem:bond_pm}.

We observe that by construction in $D_2$ there are not plus spins and we
assume, first, that in $D_2$ there is a cluster of minuses.
This cluster necessarily has a convex side length smaller 
than $l_v-1 \leq \lfloor \frac{2J}{\lambda-h} \rfloor$ by construction,
thus we conclude by applying Lemma~\ref{lem:grow_rectangle}. 
 
Thus, in $D_2$ there are only zeros 
and we consider the length $a$ of the strip of pluses 
containing $p$.
If $a \leq \lfloor \frac{2J}{\lambda+h} \rfloor $, then we conclude by applying Lemma~\ref{lem:grow_rectangle}.
If $a \geq \lfloor \frac{2J}{\lambda+h} \rfloor +1$, we distinguish two cases:
\begin{itemize}
    \item[(ii.b.1.I)] The site at distance $\sqrt{2}$ from the strip of pluses has spin different from minus. In this case, we apply Lemma \ref{lem:grow_rectangle} and we prove the Theorem.
    \item[(ii.b.1.II)] The site at distance $\sqrt{2}$ from the strip of pluses has minus spin. We conclude by building two configurations $\eta'$ and $\eta''$ and by proceeding as in case $\eta(x_0)=+1$  (ii.a). 
\end{itemize}

If $D_2= \emptyset$, i.e. $p \equiv q$, then we proceed as above by applying the same procedure to $D_1$.

\item[(ii.b.2)] 
We denote by $k=(l_h+2,k_2)$ the site with plus spin and at minimal distance 
from $\partial^+\Lambda_0$ and we proceed in the same manner of case $\eta(x_0)=+1$ (ii.b). 
The only difference from this proof is that in case (ii.b.2) we have to apply Lemma \ref{lem:shrink_rectangle} to the cluster of pluses containing the strip of length $a$ instead of the strip containing $z$.
\end{itemize}
\end{itemize}
\end{itemize}

\subsection{Proof of Theorem~\ref{thm:vmuno}}
\label{sec:vmuno}
\par\noindent
We provide separately the upper and the lower bound to $V_\muno$, 
respectively, in Sections~\ref{sec:upper} and \ref{sec:lower}.

\begin{figure}[t]
\begin{center}
    \includegraphics[scale=0.5]{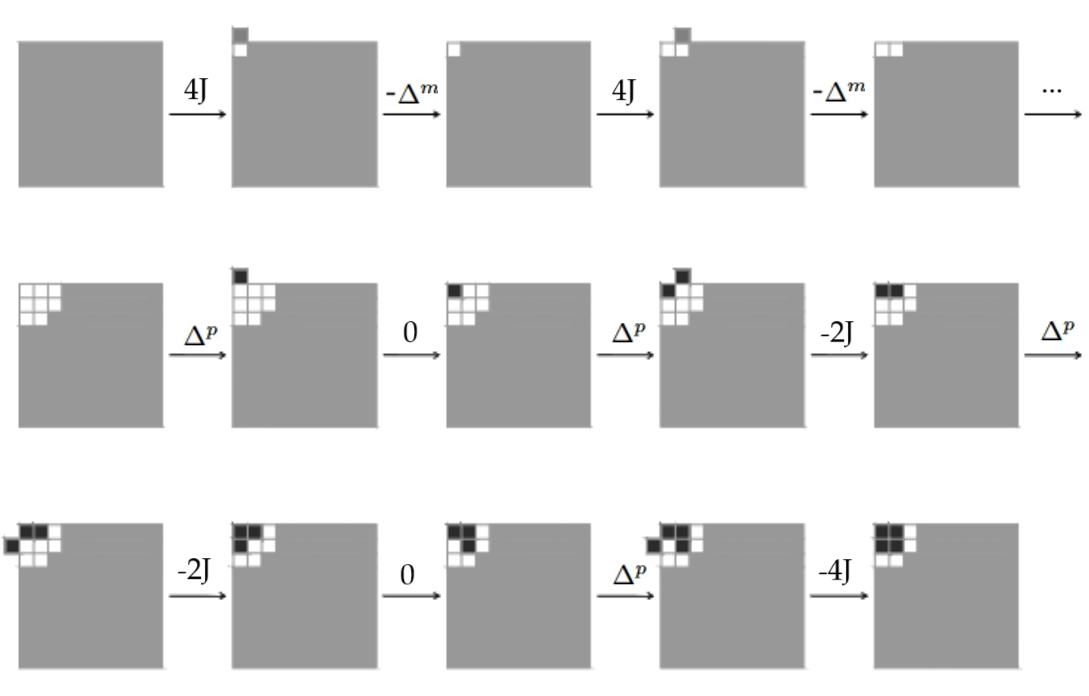}
    \caption{The first part of the reference path. White, light gray and dark grey squares represent zero, minus and plus spins respectively.}\label{fig:ref_path_1}
\end{center}
    \end{figure}

\subsubsection{Upper bound for $V_{\muno}$}
\label{sec:upper}
\par\noindent
In this section, we construct a path from $\muno$ to $\puno$, 
the so-called \emph{reference path}, and we found an upper bound for 
$V_\muno$. 
Starting from $\sigma_0 = \muno$, the system follows the path in Figure \ref{fig:ref_path_1}
until it reaches the configuration with a frame in the corner as in the bottom right panel in Figure \ref{fig:ref_path_1}. We denote such a configuration by $\sigma_{2,2}$ and we continue the path towards $\puno$ with the mechanism described in the figures \ref{fig:ref_path_2}-\ref{fig:ref_path_3} for a general side length $l$ of the frame
(we denote the corresponding configuration with $\sigma_{l,l}$).
%
%
In particular, starting from $\sigma_{l,l}$, the path reaches $\sigma_{l,l+1}$ by crossing the configuration $\eta$, i.e. the configuration in the 10-th panel in Figure \ref{fig:ref_path_2}, and we have 
%
%
\begin{align}\label{eq:saddle_grow}
H(\eta)-H(\sigma_{l,l})=6J+2(\lambda-h)-2(\lambda+h). 
\end{align}
Then, the path continues without never overcoming the energy of $\eta$.

\begin{figure}[t]
\begin{center}
    \includegraphics[scale=0.7]{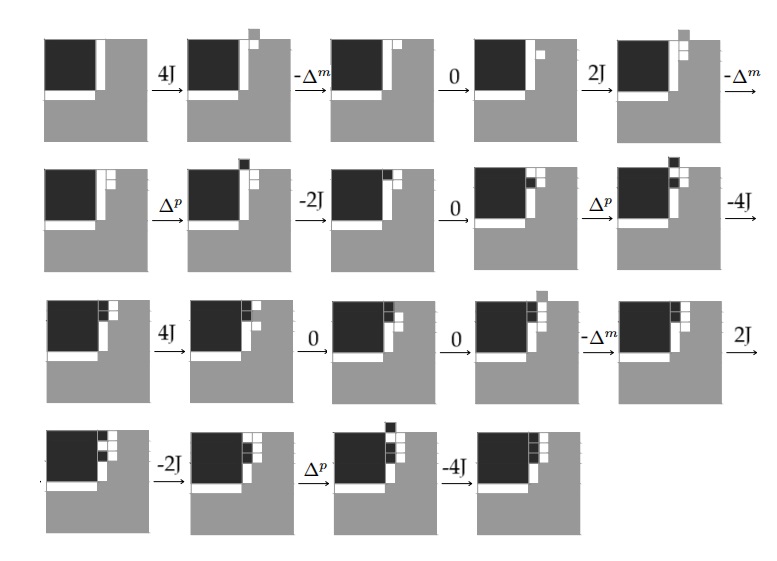}
    \caption{In this part of the reference path, a frame in the corner with side length $l$ starts to grow.}\label{fig:ref_path_2}
\end{center}
    \end{figure}
    %
%
\begin{figure}[H]
\begin{center}
    \includegraphics[scale=0.6]{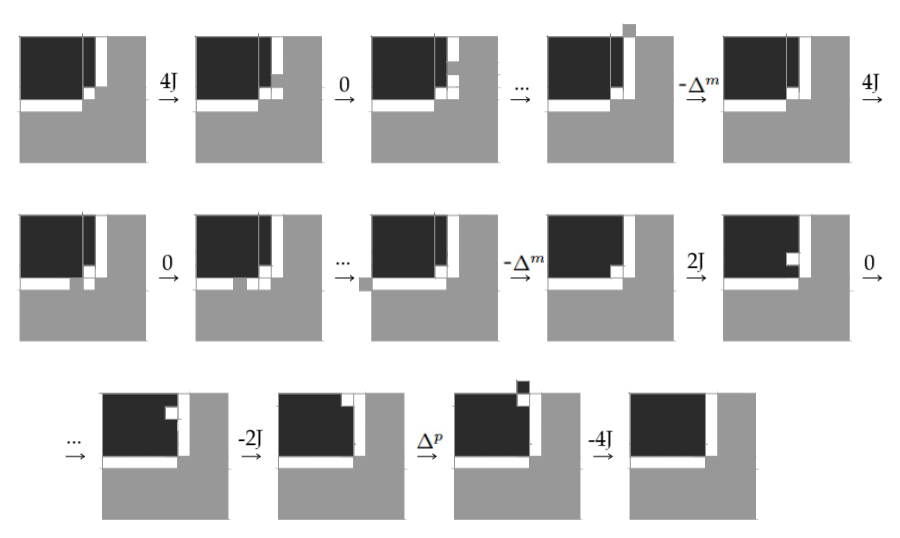}
    \caption{In this part of the reference path, a frame in the corner with side lengths $l$ and $l+1$ is created.}\label{fig:ref_path_3}
\end{center}
    \end{figure}
We repeat this mechanism of growing of the frame in the corner along the shortest side, until we reach the homogeneous phase $\puno$. 

In order to find the energy barrier to reach $\puno$ starting from $\muno$, we compute the value of the maximal energy of  $\sigma_{n,m}$
and we sum to it the value in \eqref{eq:saddle_grow}. The energy of $\sigma_{n,m}$ with $m,n \in \mathbb{N}$ respect to the energy of the homogeneous state $\muno$ is
\begin{equation}\label{eq:energy_chopped_corner_frame}
H(\sigma_{n,m})-H(\muno)=2J(n+m)+\lambda (n+m)-h(2nm+n+m).
\end{equation}
The maximal value of this function is attained for $n=m= \frac{2J+\lambda-h}{2h}$, however $n$ and $m$ are integer number, so with a simple computation we find that the chopped corner frame with maximal energy has size 
$l_c= \lfloor\frac{2J+\lambda-h}{2h} \rfloor+1$ 
and $l_c-1$. Thus, the height along the reference path is reached in a configuration $\sigma_s$ obtained as $\eta$, see as Figure \ref{fig:critical_configuration}, and its value is equal to
\begin{align}\label{eq:energy_barrier_RF}
H(\sigma_s)-H(\muno)&=2J(2l_c-1)+\lambda (2l_c-1)-h(2l_c^2-1) +6J+2(\lambda-h)-2(\lambda+h) \notag \\
&=4J(l_c+1)+\lambda (2l_c-1)-h(2l_c^2+3)
\end{align}
and then 
\begin{equation}\label{eq:upper_bound_gamma}
V_{\muno} \leq H(\sigma_s)-H(\muno).
\end{equation}

\subsubsection{Lower bound for $V_{\muno}$}\label{sec:lower}
\par\noindent
In order to find the lower bound for $V_\muno$, we use \cite[Lemma 4.14]{cirillo2024homogeneous}, that we report here for the convenience of the reader.
Let $n^+_c=l_c(l_c-1)$ and denote by $\mathcal{M}_{n^+_c} \subset \mathcal{X}$ (resp. $\mathcal{M}_{n^+_c+1} \subset \mathcal{X}$) the manifold with fixed number $n^+_c$ (resp. $n^+_c+1$) of pluses. We recall, the definition of the configuration $\sigma_c=\sigma_{l_c-1,l_c}$ provided in Figure \ref{fig:critical_configuration}. 
\begin{lemma}\label{lem:minmax_K}
 $H(\sigma_c)=\min_{\xi \in \mathcal{M}_{n^+_c}} H(\xi)$.
\end{lemma}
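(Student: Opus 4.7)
The plan is to adapt to the Kawasaki Hamiltonian $H$ the isoperimetric analysis carried out for the Glauber Hamiltonian $H_0$ in \cite[Lemma~4.14]{cirillo2024homogeneous}. Writing $H$ in the form \eqref{def:hamiltonian_BCK00} and using that the number of pluses is fixed on $\mathcal{M}_{n_c^+}$, the $\Delta^p$-contribution is constant, so minimizing $H$ on this manifold is equivalent to minimizing the bond-interaction part plus $\Delta^m$ times the number of minuses. A preliminary reduction restricts the analysis to configurations satisfying $\sigma(x)=0$ for every $x\in\partial^+\Lambda_0$: any minimizer with a non-zero spin on $\partial^+\Lambda_0$ can be modified there to decrease $H$ without changing $n_+$, since $\partial^+\Lambda_0$-sites carry no $E_0$-interactions.

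The first step is to show that at any minimizer no direct $(+,-)$ bond exists in $E_0$. If such a bond $(x,y)$ with $\sigma(x)=+1$ and $\sigma(y)=-1$ were present, then flipping $\sigma(y)$ to $0$ preserves $n_+$ and changes $H$ by at most $2J-\Delta^m$; using $\Delta^m=4J-(\lambda-h)>2J$, which follows from \eqref{mod007a} via the bound $\lambda-h<\lambda<J/3$, this difference is strictly negative, contradicting minimality. Hence the cluster of pluses is shielded from the minuses by a one-site-wide zero layer on every interior-facing side.

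The second step is to show that the plus cluster must be a single rectangle placed in a corner of $\Lambda$, with a one-site-wide L-shaped zero frame on its two interior-facing sides. The rectangular shape follows from the two-dimensional discrete isoperimetric inequality applied to the effective perimeter functional, that is, the sum of $(+,0)$-bonds and $(0,-)$-bonds plus $(\lambda-h)$ times the number of zero-frame sites. Its minimizer among subsets of area $l_c(l_c-1)$ is the $(l_c-1)\times l_c$ quasi-square. The corner placement is enforced by the observation that moving a rectangle from the interior to a corner shortens the zero frame from $2(a+b)+4$ sites to $a+b+1$ sites, each saving the bulk cost $\lambda-h>0$ along with the associated boundary-bond terms.

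The main obstacle is the rigorous reduction to a pure isoperimetric problem, because the zero frame enters $H$ through several different channels simultaneously, namely the bulk minus count, the $(+,0)$- and $(0,-)$-bond counts, and the interactions with $\partial^-\Lambda$; these must be assembled into a single effective perimeter functional for which the standard isoperimetric theorem applies. This packaging was carried out for $H_0$ in \cite[Lemma~4.14]{cirillo2024homogeneous}, and after the preliminary reduction $\sigma_{|\partial^+\Lambda_0}=0_{\partial^+\Lambda_0}$ one has $H(\sigma)=H_0(\sigma)$ on the feasible set, so the argument transfers verbatim. Once the shape and corner placement of the minimizer are established, direct counting of the $l_c(l_c-1)$ pluses, the $2l_c-1$ zero-frame sites and the $4l_c-2$ resulting $(+,0)$- and $(0,-)$-interfaces recovers the explicit value $H(\sigma_c)$ stated in \eqref{eq:energy_saddle}, completing the proof.
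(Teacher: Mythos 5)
Your overall route --- observe that the statement is purely static, restrict to configurations with $\sigma_{\partial^+\Lambda_0}=0_{\partial^+\Lambda_0}$ so that $H=H_0$, and then invoke the isoperimetric analysis of \cite[Lemma~4.14]{cirillo2024homogeneous} --- is exactly the paper's proof, which consists of nothing more than the remark that the lemma depends only on the Hamiltonian and not on the dynamics, so the Glauber--case result carries over verbatim. The problem lies in the self--contained argument you insert as your ``first step''. Writing $H$ in the form \eqref{def:hamiltonian_BCK00}, each unordered same--sign bond contributes $-2J$ and each unordered $(+,-)$ bond contributes $+2J$, so replacing by zero a minus at $y$ having $a$ minus neighbours and $b\ge 1$ plus neighbours changes the energy by
\begin{equation*}
2J(a-b)-\Delta^m \;=\; 2J(a-b)-4J+(\lambda-h).
\end{equation*}
Your bound $2J-\Delta^m$ only covers the case $a-b\le 1$. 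For a minus sitting on a flat portion of a direct $(+,-)$ interface, with the minus phase thick behind it, one has $a=3$ and $b=1$, and the flip \emph{raises} the energy by $\lambda-h>0$. Hence the absence of $(+,-)$ bonds at a minimizer of $H$ on $\mathcal{M}_{n^+_c}$ cannot be established by a single local spin flip; it requires comparing with a configuration in which an entire layer of zeros is inserted along the interface, trading several minuses simultaneously against the interface cost, and this is precisely the non--trivial content of the cited lemma. Since the shielding of the plus cluster by zeros is what defines the effective perimeter functional in your second step, the error propagates through the rest of the sketch.

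The proposal is salvageable, because in the end you declare that the packaging ``transfers verbatim'' from \cite[Lemma~4.14]{cirillo2024homogeneous}: deleting the flawed local argument and leaning entirely on that citation reproduces the paper's proof. Two further small points. First, your preliminary reduction on $\partial^+\Lambda_0$ needs a word about plus spins there: if pluses on $\partial^+\Lambda_0$ are counted in $n^+_c$ you cannot simply delete them without leaving the manifold, so you must instead relocate them into $\Lambda_0$, and one has to check that this can be done without increasing the energy (an isolated boundary plus costs exactly $\Delta^p$, while a bulk placement next to a minus would cost more). Second, the corner--placement comparison should be phrased in terms of the gain $\lambda-h>0$ per additional minus site recovered when the zero frame is shortened, since zero sites themselves carry no energy; your count of frame sites is the right bookkeeping device but the saving comes from the bulk term, not from the frame per se.
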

Note that this lemma was proven in \cite{cirillo2024homogeneous} for the Blume-Capel model, with the Hamiltonian \eqref{def:hamiltonian_BCK} that we use in this paper, with the Glauber dynamics. We remark that the result is valid also in this setting, since it does not depend on the dynamics but only on the structure of the Hamiltonian. 

Consider $\underline{\omega}$ from $\muno$ to $\puno$ and let $\sigma\in\mathcal{M}_{n^+_c}$ and $\eta\in\mathcal{M}_{n^+_c+1}$ be two consecutive configurations along this path. Then $\Phi(\underline{\omega})\geq H(\eta)=H(\sigma)+\Delta^p$. By Lemma \ref{lem:minmax_K}, we have $H(\sigma)\geq H(\sigma_c)$ and therefore $V_{\muno} \geq H(\sigma_c)+\Delta^p-H(\muno)$.

\subsection{Proof of Theorem~\ref{thm:Identification}}
\label{sec:Identification}
\par\noindent
By \eqref{mod007b}
it follows that $V^*<H(\sigma_c)+\Delta^p-H(\muno)$. Thus, 
by Theorems~\ref{thm:out_gammaBCK} and \ref{thm:vmuno} it follows that 
the stability level 
of $\muno$ is larger that the stability level of all other 
configurations differing from $\puno$.

Since, 
the stability level of $\muno$ 
is the maximal one for the 
configurations in $\mathcal{X}\setminus\{\muno\}$, 
by 
\cite[Theorem 2.4]{cirillo2013relaxation} 
we have that $\muno$ is the 
unique metastable state for the system. 

\subsection{Proof of Theorem~\ref{thm:transition_time_BCG}}
\par\noindent
Remark that $V_\muno$ 
is
the maximal stability level. Then,
by applying \cite[Theorem 4.1]{manzo2004essential} with $\eta_0=\muno$ 
we get that
\begin{displaymath}
    \lim_{\beta \to \infty} 
\mathbb{P}_{\muno}(e^{\beta(V_\muno-\epsilon)}< 
             \tau_{\puno}<e^{\beta(V_\muno+\epsilon)}) =1, 
\end{displaymath}
for any $\epsilon>0$.
Thus the theorem follows by noting that the event 
$\{e^{\beta(V_\muno-\epsilon)}< 
             \tau_{\puno}<e^{\beta(V_\muno+\epsilon)}\}$
is a subset of the event $\{e^{\beta(H(\sigma_c)+\Delta^p-H(\muno)-\epsilon)}< 
             \tau_{\puno}<e^{\beta(H(\sigma_s)-H(\muno)+\epsilon)}\}$.

\section{Proof of lemmas}
\label{s:pro-le}
\par\noindent
In this section we prove all the auxiliary lemmas related to the recurrence property and stated in Section \ref{sec:auxiliary}.

\begin{proof} [Proof of Lemma \ref{thm:grafo_pesato}]
Before starting the proof, we recall that a \emph{complete weighted directed graph} $G=(V,E)$ is a graph 
such that a \emph{weight} $w{(u,v)}$ is associated to every edge $(u,v)$ of the graph and every vertex is connected with the others. Moreover, given a path $p$ on $G$, the sum of the weights along the path is called \emph{path-weight} and denoted by $w_p$. 
Given a path $p=(v_1,...,v_n)$, the \emph{maximal sub-path weight} is defined as $m_p=\max_{k \in \{1,...,n-1 \}} \sum_{i=1}^{k} w(v_i,v_{i+1})$.

Let $\sigma \in \mathcal{X}$ be a configuration as in the assumption. In particular, we suppose that $\sigma$ contains a zero-carpet. The proof of the other cases is analogous.

We let $x_0:=x$ and let $x_1,...,x_n$ be a sequence of sites of $X'$ such that $x_i$ is connected with $x_{i+1}$ for $i=0,...,n-1$.
Recall $\sigma(x)=r\in\{-1,+1\}$. 

We consider a weighted directed graph $G=(V,E)$ with self-loops such that each vertex is a non-ordered pair of spins and it is connected with all the others.
Each weight $w(v,u)$ is defined as reported in Figure \ref{figure:graph_carpet}.

We associate to every site $x_i$ of the sequence $ x_0,x_1,...,x_n $ a vertex in $V$ such that the nearest sites of $x_i$ different from $x_{i-1}$ and $x_{i+1}$ have spins equal to the vertex. We denote by $v_i$ the vertex associated with $x_i$. We recall that the sites $ x_0,x_1,...,x_n $ have two nearest spins equal to zero. 

We consider $x_i, x_j$ as above and we note that $H(\sigma^{(x,x_j)})-H(\sigma^{(x,x_i)})$ is equal to the weight $w(v_i,v_j)$;
for the sake of clarity the weight already reported in the left panel of Figure \ref{figure:graph_carpet} are reported also in \ref{tab:graph}.
%
\begin{table}\label{tab:graph}
\begin{center}
\begin{tikzpicture} [scale=0.6]
    \path (-3,0) edge (16,0);
    \path (0,1) edge (16,1);
    \path (0,2) edge (16,2);
    \path (0,3) edge (16,3);
    \path (0,4) edge (16,4);
    \path (-3,5) edge (16,5);
    \path (4,6) edge (16,6);
    
    \path (2,7) edge (16,7);
    \path (-3,5) edge (-3,0);
    
    \path (-3,5) edge (-3,7);
    \path (-3,7) edge (2,7);
    
    \path (0,0) edge (0,5);
    \path (4,0) edge (4,7);
    \path (6,0) edge (6,6);
    \path (8,0) edge (8,6);
    \path (12,0) edge (12,6);
    \path (14,0) edge (14,6);
    \path (16,0) edge (16,7);
    
    \node [scale=0.9] at (2,0.5) {$(s,s)$}; 
    \node [scale=0.9] at (2,1.5) {$(0,s)$};
    \node [scale=0.9] at (2,2.5) {$(0,0)$ or $(s,r)$};
    \node [scale=0.9] at (2,3.5) {$(0,r)$};
    \node [scale=0.9] at (2,4.5) {$(r,r)$};
    
    \node [scale=0.9] at (5,5.5) {$(r,r)$}; 
    \node [scale=0.9] at (7,5.5) {$(0,r)$};
    \node [scale=0.9] at (10,5.5) {$(0,0)$ or $(s,r)$};
    \node [scale=0.9] at (13,5.5) {$(0,s)$};
    \node [scale=0.9] at (15,5.5) {$(s,s)$};
    
    \node [scale=0.9] at (-1.5,4) {spins}; 
    \node [scale=0.9] at (-1.5,3) {of nearest};
    \node [scale=0.9] at (-1.5,2) {neighbours};
    \node [scale=0.9] at (-1.5,1) {of $x$};
    \node [scale=0.9] at (10,6.5) {spins of nearest neighbours of $y$};
    
    \node [scale=0.9] at (0.5,6) {$H(\sigma^y)-H(\sigma)$};
    
    \node [scale=0.9] at (5,4.5) {$0$};
    \node [scale=0.9] at (7,4.5) {$2J$};
    \node [scale=0.9] at (10,4.5) {$4J$};
    \node [scale=0.9] at (13,4.5) {$6J$};
    \node [scale=0.9] at (15,4.5) {$8J$};
    
    \node [scale=0.9] at (5,3.5) {$-2J$};
    \node [scale=0.9] at (7,3.5) {$0$};
    \node [scale=0.9] at (10,3.5) {$2J$};
    \node [scale=0.9] at (13,3.5) {$4J$};
    \node [scale=0.9] at (15,3.5) {$6J$};
    
    \node [scale=0.9] at (5,2.5) {$-4J$};
    \node [scale=0.9] at (7,2.5) {$-2J$};
    \node [scale=0.9] at (10,2.5) {$0$};
    \node [scale=0.9] at (13,2.5) {$2J$};
    \node [scale=0.9] at (15,2.5) {$4J$};
    
    \node [scale=0.9] at (5,1.5) {$-6J$};
    \node [scale=0.9] at (7,1.5) {$-4J$};
    \node [scale=0.9] at (10,1.5) {$-2J$};
    \node [scale=0.9] at (13,1.5) {$0$};
    \node [scale=0.9] at (15,1.5) {$2J$};
    
    \node [scale=0.9] at (5,0.5) {$-8J$};
    \node [scale=0.9] at (7,0.5) {$-6J$};
    \node [scale=0.9] at (10,0.5) {$-4J$};
    \node [scale=0.9] at (13,0.5) {$-2J$};
    \node [scale=0.9] at (15,0.5) {$0$};
    
\end{tikzpicture}
\end{center}
\caption{Energy difference for all the possible choices of the 
local configuration of the sites $x$ and $y$ zeros after the swap 
of the particle originally at site $x$. The two neighbouring spins not specified in the table are zeros. }
\label{tab:weighted}
\end{table}

We consider a path from $\sigma^{(x,x_i)}$ to $\sigma^{(x,x_j)}$ composed by the following pairwise communicating configurations 
$\underline{\omega}=(\sigma^{(x,x_i)}, \sigma^{(x,x_{i+1})}, ...,  \sigma^{(x,x_{j-1})}, \sigma^{(x,x_j)})$. 
In order to compute the height of this path, we consider the associated path $p_{\underline{\omega}}$ on the graph $G$ and we compute its maximal sub-path weight.

In the following, we explain in detail how to compute the maximal sub-path weight of a general path $p$ on $G$. 
First, we consider the path $p'$ obtained removing all the loops in $p$. We note that the path-weight of a loop is $0$, because the weight of the edge $(v,u)$ is defined as the difference of a real-valued function $f$ calculated in the two vertices $v$ and $u$, i.e. $w(v,u)=f(v)-f(u)$. 
Hence, it follows that $w(v,u)=-w(u,v)$ for each pair of vertices $v,u$.
Moreover, from Figure \ref{figure:graph_carpet}, the maximal sub-path weight of $p$ is smaller than or equal to $8J$.  
Thus, $\Phi_{\underline{\omega}}a=m_{p_{\underline{\omega}}} \leq 8J$.

\end{proof}

\begin{proof}[Proof of Lemma \ref{lem:transport_zero}]
(i). 
Let $\sigma$ be a configuration as in the assumption and let $\eta=\sigma^{(x;s)}$. Let $s \in \{-1,+1 \}$. Let $x_1,...,x_{n} \in X$ and $x_0 \in \partial^+ \Lambda_0$ such that $|x_i-x_{i-1}|=1$ for $i=1,...,n$ and $|x_n-x|=1$. We construct a path $\underline{\omega}=(\sigma, \omega_0, \omega_1,..., \omega_n, \eta)$ in the following way:
\begin{align}
    \omega_0:=\sigma^{(x_0;s)}; \qquad
    \omega_i:=\omega_{i-1}^{(x_{i-1},x_{i})} \text{ for } i=1,...n; \qquad
    \omega_{n+1}:=\omega_n^{(x_n,x)}.
\end{align}

We observe that $\omega_{n+1}=\eta$. 
By \eqref{def:hamiltonian_BCK} and \eqref{def:cost_swap}, we have
\begin{align}
    H(\eta)-H(\sigma)& =[H(\eta)-H(\omega_0)]+[H(\omega_0)-H(\sigma)] \notag \\
    &\leq 2J\Big [ [ D_x(\omega_0)+D_{x_0}(\omega_0)]-[D_x(\eta)+D_{x_0}(\eta)] \Big] +\Delta_s=-2JD_{x}(\eta)+\Delta_s,
\end{align}
where the equality holds when $\sigma(x_0) = 0$.
We compute now the height between $\sigma$ and $\sigma^{(x;s)}$ along the path $\underline{\omega}$. 
We get the particle $s$ in $\partial^+\Lambda_0$ if $\sigma(x) \neq s$, and the energy increases at most by $\Delta_s$, i.e. $H(\omega_0) \leq H(\sigma)+\Delta_s$. 
By applying Lemma \ref{thm:grafo_pesato} to $\omega_0$, we have 
\begin{equation}
    \Phi((\omega_0,...,\omega_n)) \leq H(\omega_0)+8J \leq H(\sigma)+\Delta_s+8J.
\end{equation}
Thus, 
\begin{align}
    \Phi((\sigma,...,\eta)) &=\max \{ \Phi ((\omega_0,...,\omega_n)), \Phi ((\omega_n,\eta))\} \notag \\
    & \leq H(\sigma)+\Delta_s+8J+ 2J |D_{x}(\eta) | \leq H(\sigma)+18J.
\end{align}

(ii).
Let $\sigma$ be a configuration as in the assumption. We let $x_0=x$ and $\sigma(x)=s \neq 0$. 
Let $x_1,...,x_{n} \in X$ and $x_n \in \partial^+ \Lambda_0$ such that $|x_i-x_{i-1}|=1$ for $i=1,...,n$. We construct a path $\underline{\omega}=(\sigma, \omega_0, \omega_1,..., \omega_n, \eta)$ in the following way:
\begin{align}
    \omega_0:=\sigma; \qquad \omega_i:=\omega_{i-1}^{(x_{i-1},x_i)} \text{ for } i=1,...,n; \qquad \omega_{n+1}=\sigma^{(x;0)}.
\end{align} 
By \eqref{def:hamiltonian_BCK} and \eqref{def:cost_swap}, we obtain
\begin{align}
&    H(\omega_{n})-H(\sigma)=2JD_x(\sigma), \notag \\
& H(\omega_{n+1})-H(\omega_n)=-\Delta_s
\end{align}
Moreover, $\Phi((\omega_n, \omega_{n+1}))=H(\omega_n)$ since $H(\omega_{n+1})<H(\omega_n)$, and by applying Lemma \ref{thm:grafo_pesato} to $\omega_1$ we have that 
\begin{align}
    \Phi((\omega_1,..., \omega_n)) \leq H(\omega_1)+8J \leq H(\omega_0)+2J |D_{x_0,x_1}(\omega_0)| +8J,
\end{align}  
Thus, we have
\begin{align}
    \Phi((\omega_0, \omega_{n+1})) & \leq \max \{ \Phi ((\omega_0, \omega_1)), \Phi ((\omega_1,..., \omega_{n})) \} \notag \\
    & = \max \{H(\omega_0)+2J |D_{x_0,x_1}(\omega_0)|, H(\omega_0)+2J |D_{x_0,x_1}(\omega_0)| +8J \}\leq H(\omega_0)+14J,
\end{align}
where the last inequality follows from the fact that in the worst case $x_0$ has three nearest neighbours with spin equal to $s$ and the two nearest neighbours of $x_1$ not in $X$ has spin value $-s$. 
\end{proof}

\begin{proof}[Proof of Lemma \ref{lemma:transport_scarpet}]
    (i). Let $\sigma$ be a configuration as in the assumption and let $\eta=\sigma^{(y;0)}$. We suppose first $\sigma(y)=-s$. Set $y_0=y$ and let $y_1,...,y_{n} \in X$ be such that $|y_i-y_{i-1}|=1$ for $i=1,...,n$ and $y_n \in \partial^- \Lambda_0$. 
    Let $z \in \partial^+ \Lambda_0$ be at distance one from $X$ and, without loss of generality, we assume that $\sigma(z)=0$. We construct a path $\underline{\omega}$ from $\sigma$ to $\eta$ in the following way:
    \begin{itemize}
        \item[1.] the particle with spin $-s$ in $y$ moves through the s-carpet until $\partial^-\Lambda_0$;
        \item[2.] the $-s$ in $\partial^-\Lambda_0$ is exchanged with a zero in $\partial^+ \Lambda_0$ and then it leaves $\Lambda$;
        \item[3.] the zero in $\partial^-\Lambda_0$ moves along the s-carpet until it reaches the site $y$.
    \end{itemize}
    Formally, we define the path $\underline{\omega}=(\omega_0,\omega_1,\omega_n,\omega_{n+1}, \xi_0, \xi_1,...,\xi_n)$ such that
\begin{align}
    \omega_0:=\sigma; \qquad &
    \omega_i:=\omega_{i-1}^{(y_{i-1},y_{i})} \text{ for } i=1,...n; \qquad \,\,\,
    \omega_{n+1}:=\omega_n^{(y_n,z)}; \\
    \xi_0:=\omega_{n+1}^{(z;0)}; \qquad &
    \xi_i:=\xi_{i-1}^{(y_{n-i+1},y_{n-i})} \text{ for } i=1,...,n.
\end{align}
We note that $\xi_n= \eta$.
By \eqref{def:hamiltonian_BCK} and \eqref{def:cost_swap}, we obtain
\begin{align}
& H(\eta)-H(\sigma)=2JD_y(\sigma)-\Delta_{-s}
\end{align}

Next, we compute the height between $\sigma$ and $\eta$ along this path. 
\begin{align*}
    \Phi(\underline{\omega})= \max \{ &\Phi((\sigma,\omega_1)),\, \Phi((\omega_1,...,\omega_{n-1})), \, \Phi((\omega_{n-1}, \omega_n, \omega_{n+1})), \, \Phi((\omega_{n+1},\xi_0)), \Phi((\xi_0,\xi_1)), \notag \\
    & \Phi((\xi_1,...,\xi_{n-1})), \, \Phi((\xi_{n-1}, \eta))\}
\end{align*}
We note that $\Phi ((\sigma,\omega_1)) = \max \{ H(\sigma), H(\omega_1) \} \leq H(\sigma)+2J| D_{y_0,y_1}(\sigma)|$,
and by applying Lemma \ref{thm:grafo_pesato} to the configuration $\omega_1$ we have that 
\begin{align}
    \Phi((\omega_1,...,\omega_{n-1})) \leq H(\omega_1)+16J \leq H(\sigma)+2J| D_{y_0,y_1}(\sigma)| + 16J
\end{align}  
Then, $\Phi ((\sigma,\omega_1)) \leq \Phi((\omega_1,..., \omega_{n-1}))$.
Moreover, $\Phi((\omega_{n+1},\xi_0))=H(\omega_{n+1})$ since $H(\xi_0)<H(\omega_{n+1})$, then $\Phi((\omega_{n+1},\xi_0)) \leq \Phi((\omega_{n-1},\omega_n, \omega_{n+1}))=\max \{ H(\omega_{n-1}), H(\omega_n), H(\omega_{n+1}) \}$ and 
\begin{align}
    & H(\omega_{n+1})=H(\omega_n)+2JD_{y_n,z}(\omega_n) \notag \\
    & H(\omega_{n})=H(\omega_{n-1})+2JD_{y_{n-1},y_n}(\omega_{n-1}) \notag \\
    & H(\omega_{n-1}) \leq H(\omega_1)+16J
\end{align}
where the last inequality follows from Lemma \ref{thm:grafo_pesato}. Then,
\begin{align}
\Phi((\omega_{n-1},\omega_n,\omega_{n+1})) &\leq H(\omega_1)+2J|D_{y_n,z}(\omega_n)| +2J|D_{y_{n-1},y_n}(\omega_{n-1})| +16J \notag \\
& \leq H(\sigma)+2J| D_{y_0,y_1}(\sigma)| +2J|D_{y_n,z}(\omega_n)| +2J|D_{y_{n-1},y_n}(\omega_{n-1})| +32J
\end{align} 
Thus 
\begin{align}\label{eq:dis_Phi1}
\max \{ \Phi ((\sigma,\omega_1)), \Phi((\omega_1,..., \omega_{n-1})), \Phi((\omega_{n-1},\omega_n, \omega_{n+1})) \} \leq H(\sigma)+54J,
\end{align}
since $2J| D_{y_0,y_1}(\sigma)| \leq 10J$, $2J|D_{y_n,z}(\omega_n)| \leq 4J$ and $2J|D_{y_{n-1},y_n}(\omega_{n-1})| \leq 8J$.
 
Furthermore, $\Phi((\xi_0, \xi_1))= \max \{ H(\xi_0), H(\xi_1) \} \leq H(\xi_0)+2J| D_{y_{n},y_{n-1}}(\xi_0)|$, and by applying Lemma \ref{thm:grafo_pesato} to $\xi_1$ we have that 
\begin{align}
    \Phi((\xi_1,..., \xi_{n-1})) \leq H(\xi_1)+8J \leq H(\xi_0)+2J| D_{y_{n},y_{n-1}}(\xi_0)| +8J,
\end{align} 
then $\Phi((\xi_0, \xi_1))\leq \Phi((\xi_1,..., \xi_{n-1}))$.
Finally, 
\begin{align}
    \Phi ((\xi_{n-1},\xi_n))=\max \{ H(\xi_{n-1}), H(\xi_n) \} &\leq H(\xi_{n-1})+2J| D_{y_1,y_0}(\xi_{n-1})| \notag \\
    &\leq H(\xi_1)+8J+2J| D_{y_1,y_0}(\xi_{n-1})| \notag \\
    & \leq H(\xi_0)+2J| D_{y_{n},y_{n-1}}(\xi_0)| +8J +2J| D_{y_1,y_0}(\xi_{n-1})|
\end{align}
where the second inequality is obtained by using Lemma \ref{thm:grafo_pesato}. Then, 
\begin{align}\label{eq:dis_Phi2}
\max \{ \Phi((\xi_0, \xi_1)), \Phi((\xi_1,...,\xi_{n-1})), \Phi ((\xi_{n-1},\xi_n)) \} \leq H(\xi_0)+26J,
\end{align}
since $2J| D_{y_{n},y_{n-1}}(\xi_0)| \leq 8J$ and $2J| D_{y_1,y_0}(\xi_{n-1})| \leq 10J$.

Thus, by \eqref{eq:dis_Phi1} and \eqref{eq:dis_Phi2}, we obtain
\begin{align}
    \Phi((\sigma,..., \xi_{n})) & \leq \max \{ H(\sigma)+54J, \, H(\xi_0)+26J \}.
\end{align}
Moreover
\begin{align}
    H(\xi_0)-H(\sigma)& =[H(\xi_0)-H(\omega_{n+1})]+[H(\omega_{n+1})-H(\omega_{n})]+[H(\omega_{n})-H(\sigma)] \notag \\
    & =-\Delta_{-s}+2J| D_{y_n,z} (\omega_n)| +2J| D_{y_0,y_n} (\sigma)| \notag \\
    &\leq -\Delta_{-s} +14 J < 10J+(\lambda -s h) <11J
\end{align}
since $2J| D_{y_n,z} (\omega_n)| \leq 4J$, $2J| D_{y_0,y_n} (\sigma)| \leq 10 J$ and Condition \ref{mod007b}. Thus, we conclude
\begin{align}
    \Phi((\sigma,..., \omega_{n+1})) \leq H(\sigma)+54J.
\end{align}

In the case of $\sigma(y)=s$, we proceed in a similar way by constructing a path from $\sigma$ to $\eta$ composed by only the last part of the previous path. i.e. starting from $\sigma$ the zero in $\partial^-\Lambda_0$ moves along the s-carpet until it reaches the site $y$.

(ii).
Let $\sigma$ be a configuration as in the assumption, and let $\eta=\sigma^{(y;s)}$. Set $y=y_0$ and let $y_1,...,y_{n} \in X$ be such that $|y_i-y_{i-1}|=1$ for $i=1,...,n$ and $y_n \in \partial^- \Lambda_0$. 
Let $z \in \partial^+ \Lambda_0$ and we construct a path $\underline{\omega}$ from $\sigma$ to $\eta$ in the following way:
    \begin{itemize}
        \item[--] the particle with zero spin in $y$ crosses the s-carpet until $y_n \in \partial^-\Lambda_0$;
        \item[--] a particle with spin $s$ gets in $\partial^+ \Lambda_0$;
        \item[--] the particle moves from $z$ to $y_n$ by exchanging with the zero in $y_n$. 
    \end{itemize}
    Formally, we define the path $\underline{\omega}=(\omega_0,\omega_1,\omega_n,\omega_{n+1}, \xi_0, \xi_1,...,\xi_n)$ such that
\begin{align}
    & \omega_0:= \sigma; \qquad \qquad 
    \omega_i:=\omega_{i-1}^{(y_{i-1},y_{i})} \text{ for } i=1,...n; \\
    &\xi_1:=\omega_n^{(z;s)}; \qquad \,\,
    \xi_2:=\omega_{n+1}^{(z,y_n)}.
\end{align}
We note that $\xi_2= \eta$. By \eqref{def:hamiltonian_BCK} and \eqref{def:cost_swap}, we obtain
\begin{align}
& H(\eta)-H(\sigma)=[H(\eta)-H(\xi_1)]+[H(\xi_1)-H(\sigma)]=-2JD_y(\eta)+\Delta_{s}
\end{align}

Next, we compute the height between $\sigma$ and $\eta$ along this path. 
\begin{align*}
    \Phi(\underline{\omega})= \max \{ \Phi((\sigma,\omega_1)),\, \Phi((\omega_1,...,\omega_{n-1})), \, \Phi((\omega_{n-1}, ...,\eta)), \}
\end{align*}
We note that $\Phi((\sigma,\omega_1)) = \max \{ H(\sigma), H(\omega_1) \} \leq H(\sigma)+2J| D_{y_0,y_1}(\sigma)|$,
and by applying Lemma \ref{thm:grafo_pesato} to the configuration $\omega_1$ we have that 
\begin{align}\label{eq:Phi1_part2}
    \Phi((\omega_1,..., \omega_{n-1})) \leq H(\omega_1)+8J \leq H(\sigma)+2J| D_{y_0,y_1}(\sigma)| + 8J \leq  H(\sigma)+18J
\end{align}  
since $2J| D_{y_0,y_1}(\sigma)| \leq 10 J$. Then, $\Phi((\sigma,\omega_1)) \leq \Phi((\omega_1,..., \omega_{n-1}))$.

Moreover, $\Phi((\omega_{n-1},...,\eta))=\max \{ H(\omega_{n-1}), H(\omega_n), H(\xi_1), H(\eta) \}$ and 
\begin{align}
    & H(\eta)= H(\xi_1)+2JD_{z,y_n}(\xi_1) \leq H(\xi_1)+4J \notag \\
    & H(\xi_1)=H(\omega_n)+\Delta_s \notag \\
    & H(\omega_{n})=H(\omega_{n-1})+2JD_{y_{n-1},y_n}(\omega_{n-1}) \leq H(\omega_{n-1})+8J \notag \\
    & H(\omega_{n-1}) \leq H(\omega_1)+8J
\end{align}
where the last inequality follows from Lemma \ref{thm:grafo_pesato}. Then,
\begin{align}\label{eq:Phi2_part2}
\Phi((\omega_{n-1},...,\eta)) \leq H(\omega_1)+\Delta_s+20J \leq H(\omega_1) +25J \leq H(\sigma)+2J| D_{y_0,y_1}(\sigma)| +25 J \leq H(\sigma)+35J
\end{align} 
Thus, by \eqref{eq:Phi1_part2} and \eqref{eq:Phi2_part2}, we conclude $\Phi((\sigma,..., \eta)) \leq H(\sigma)+35J$.

\end{proof}

\begin{proof}[Proof of Lemma \ref{lem:bond_pm}]
Let $\sigma$ be a configuration as in the assumption. Suppose that $\sigma_{\partial^+ \Lambda_0}=0_{\partial^+ \Lambda_0}$, otherwise the stability level of $\sigma$ is zero, indeed when a plus or a minus leaves $\Lambda$ the energy decreases. 
Assume first that the bond $(+,-)$ is at distance one from $\partial^+ \Lambda_0$. We consider the configuration $\eta=\sigma^{(x;0)}$ where $x$ is the site of the bond at distance one from $\partial^+ \Lambda_0$. Suppose without loss of generality that $\sigma(x)=-1$, then with a direct computation we have
\begin{align}
    H(\eta)-H(\sigma) \leq -2J+(\lambda-h)<0.
\end{align}
We note that $\sigma$ and $\eta$ are not communicating configurations. Let $\xi=\sigma^{(x,x')}$ where $x' \in \partial^+ \Lambda_0$ is at distance one from $x$, then $\sigma \sim \xi \sim \eta$ and 
\begin{equation}
    \Phi(\sigma,\eta)-H(\sigma)= \max \{ H(\xi)-H(\sigma), 0\} \leq 2J,
\end{equation}
since $D_x(\sigma) \leq 1$, where $D_x(\sigma)$ is defined in \eqref{def:number_interfaces}.

In the following, assume that there exists a zero--carpet at distance one from $x$, where $x$ is one of the two sites of the bond. Suppose without loss of generality that $\sigma(x)=-1$.
We apply Lemma \ref{lem:transport_zero}-(ii) and we obtain 
\begin{equation}
    \Phi(\sigma^{(x;0)},\sigma)-H(\sigma) \leq 14 J \qquad \text{ and } \qquad H(\sigma^{(x;0)})-H(\sigma) \leq -2J+(\lambda-h),
\end{equation}
since this minus spin is at distance one from a plus spin and a zero spin of the carpet, then it has at most two nearest neighbors with the same value of the spin. This implies that $D_x(\sigma) \leq 1$, where $D_x(\sigma)$ is defined in \eqref{def:number_interfaces}.
\end{proof}


\begin{proof}[Proof of Lemma \ref{lem:bond_pm_different_phase}]
Let $\sigma$ as in the assumption. Suppose that $\sigma_{\partial^+ \Lambda_0}=0_{\partial^+ \Lambda_0}$, otherwise $V_\sigma=0$, indeed when a plus or a minus leaves $\Lambda$ the energy decreases. 
Suppose without loss of generality that $s=-1$ and that the plus belonging to the bond $(+,-)$ is at distance one from the minus-carpet. We consider the configuration $\sigma^{(x;0)}$, where $x$ is the site of the bond with plus spin.  Assume first that this plus has at most two pluses as nearest neighbours, then by
using Lemma \ref{lemma:transport_scarpet}-(i), we have
\begin{equation}
   \Phi (\sigma, \sigma^{(x;0)})-H(\sigma) \leq 54 J \qquad \text{ and } \qquad H(\sigma^{(x;0)})-H(\sigma) \leq -2J+(\lambda+h).
\end{equation}
Now, we suppose that this plus spin has three pluses as nearest neighbours. 
We analyze the two columns (or rows) that contains the bond $(+,-)$ until we possibly find a bond different 
from $(+,-)$. 
In the case that 
$\sigma$ contains two columns composed by 
all bonds $(+,-)$ in $\Lambda_0$, we look at the bond of these two columns in $\partial^-\Lambda_0$ and we note that the plus of this bond has at most two nearest neighbours equal to plus, so we proceed as before. 
In the case that there exists a bond different 
from $(+,-)$ in the two columns, we distinguish two cases:
(i) if the pair of spins in the bond is of the type $(s,r)$ with $s \neq +1$ and $r \in \{-1,0,+1\}$, then the nearest bond $(+,-)$ contains a plus with at most two pluses nearest neighbours and we proceed as before;
(ii) if the pair of spins in the bond is of the type $(+,r)$, then the nearest bond $(+,-)$ contains a minus with at most two minus nearest neighbours and we conclude by applying Lemma \ref{lemma:transport_scarpet}-(i) to this minus.
\end{proof}

\begin{proof}[Proof of Lemma \ref{lem:p_distance_2_0carpet}]
Let $\sigma$ be a configuration as in the assumption. Suppose that $\sigma_{\partial^+ \Lambda_0}=0_{\partial^+ \Lambda_0}$, otherwise $V_\sigma=0$, indeed when a plus or a minus leaves $\Lambda$ the energy decreases. 
Assume first $\sigma(x_4) \neq -1$ and we consider the configuration $\eta=\sigma^{(x;+)}$. Then, we conclude by applying Lemma \ref{lem:transport_zero}-(i). 
On the other hand, we suppose $\sigma(x_4) =-1$ with at most two nearest neighbors equal to $-1$.
We apply Lemma \ref{lem:transport_zero}-(ii) to $x_4$ and we obtain a configuration $\xi=\sigma^{(x_4;0)}$ 
such that $\Phi(\sigma, \xi) \leq H(\sigma)+14J$ and $H(\xi) \leq H(\sigma)+(\lambda-h)$.
The configuration $\xi$ satisfies the assumption of Lemma \ref{lem:transport_zero}-(i), 
thus we define the configuration $\eta=\xi^{(x;+)}$ 
such that $\Phi(\xi, \eta) \leq H(\xi)+18J$ and $H(\eta) \leq H(\xi)-(\lambda+h)$. 
Then, $H(\eta)<H(\sigma)$ and finally
$\Phi(\sigma,\eta) \leq \max \{ \Phi(\sigma, \xi), \Phi(\xi, \eta) \} \leq 18 J.$
\end{proof}

\begin{proof}[Proof of Lemma \ref{lem:shrink_rectangle}]
Let $\sigma_0=\sigma$ be a configuration that contains a cluster of pluses as in the assumption. The proof for a cluster of minuses is similar.
Suppose that $\sigma_{\partial^+ \Lambda_0}=0_{\partial^+ \Lambda_0}$, otherwise $V_\sigma=0$, indeed when a plus or a minus leaves $\Lambda$ the energy decreases. 
We call $x_1,...,x_l$ the sites along the convex side of the cluster and let $x_1$ be the corner along this side at distance one from the zero-carpet. 
We define a path between $\sigma_0$ and the configuration $\sigma_l$ obtained from $\sigma_0$ by shrinking the cluster of pluses, formally: 
\begin{align}
   & \sigma_i=\sigma_{i-1}^{(x_i;0)} \text{ for } i=1,...,l.
\end{align}
We note that these configurations are not communicating, so we construct a path from $\sigma_{i-1}$ to $\sigma_i$ for every $i=1,...,l$ in the following way. For $i=1$, the plus in $x_1$ moves along the zero-carpet until $\partial^+ \Lambda_0$ and then it leaves $\Lambda$. By using Lemma \ref{lem:transport_zero}-(i), we have $H(\sigma_1)-H(\sigma_0)\leq \lambda+h$ and $\Phi (\sigma_0,\sigma_1) \leq H(\sigma_0)+14J$. For $i=2$, the plus in $x_2$ is at distance one from the zero-carpet in $\sigma_1$ obtained by adding the zero in $x_1$ to the zero-carpet in $\sigma_0$, then it moves along the zero-carpet until $\partial^+ \Lambda_0$ and then it leaves $\Lambda$. As before, by using Lemma \ref{lem:transport_zero}-(i), we have $H(\sigma_2)-H(\sigma_1) \leq \lambda+h$ and $\Phi (\sigma_1,\sigma_2) \leq H(\sigma_1)+14J$. Thus, 
\begin{align}
    \Phi(\sigma_0,\sigma_2)= \max \{\Phi (\sigma_0,\sigma_1), \Phi (\sigma_1,\sigma_2)\} \leq H(\sigma_1)+14J \leq H(\sigma_0)+(\lambda+h)+14J
\end{align}
We iterate this procedure for every $i=3,...,l-1$ and we obtain
$H(\sigma_{l-1})-H(\sigma_{l-2})\leq \lambda+h$ and $\Phi (\sigma_{l-2},\sigma_{l-1}) \leq H(\sigma_{l-2})+14J$. Thus, 
\begin{align}
    \Phi(\sigma_0,\sigma_{l-1}) &= \max \{ \Phi (\sigma_0,\sigma_1), \Phi (\sigma_1,\sigma_2), ..., \Phi (\sigma_{l-2},\sigma_{l-1})\} \notag \\
    & \leq H(\sigma_{l-2})+14J \leq H(\sigma_0)+(l-1)(\lambda+h)+14J.
\end{align}
When the last plus in $x_l$ is detached from the cluster of pluses and it leaves $\Lambda$, by using Lemma \ref{lem:transport_zero}-(i), we have $H(\sigma_l)-H(\sigma_{l-1})\leq -2J+\lambda+h$ and $\Phi (\sigma_{l-1},\sigma_l)\leq H(\sigma_{l-1})+14J \leq  \Phi (\sigma_{l-2},\sigma_{l-1})$. Then, we have
\begin{align}
    H(\sigma_l)\leq H(\sigma_0)-2J+l(\lambda+h) < H(\sigma_0),
\end{align}
where the last inequality follows from the assumption $l \leq \lfloor \frac{2J}{\lambda+h}\rfloor$, and
\begin{align}
    \Phi(\sigma_0,\sigma_{l}) \leq H(\sigma_0)+(l-1)(\lambda+h)+14J < H(\sigma_0)+16J.
\end{align}
\end{proof}

\begin{proof}[Proof of Lemma \ref{lem:grow_rectangle}]
Let $\sigma_0=\sigma$ be a configuration that contains a cluster of pluses as in the assumption. The proof for a cluster of minuses is similar.
Suppose that $\sigma_{\partial^+ \Lambda_0}=0_{\partial^+ \Lambda_0}$, otherwise $V_\sigma=0$, indeed when a plus or a minus leaves $\Lambda$ the energy decreases. 
If there is at least a plus spin at distance strictly smaller than two from the convex side, then we conclude by applying Lemma \ref{lem:transport_zero}-(ii). Otherwise, all sites at distance smaller than two are zeros and we define a path between $\sigma_0$ and the configuration $\sigma_l$ obtained from $\sigma_0$ by adding a strip of pluses with length $l$ to the cluster of pluses. More precisely, we can call $x_1,...,x_l$ the sites at distance one from the convex side that form a strip of zeros in such a way that the transition from $\sigma_0$ to $\sigma_l$ can be realized through the following sequence of configurations:
\begin{align}
   & \sigma_i=\sigma_{i-1}^{(x_i;+)} \text{ for } i=1,...,l.
\end{align}
We note that these configurations are not communicating, so we create a path from $\sigma_{i-1}$ to $\sigma_i$ for every $i=1,...,l$ in the following way. For $i=1$, a plus gets in $\partial^+\Lambda_0$ and it moves along the zero-carpet until $x_1$. By using Lemma \ref{lem:transport_zero}-(i), we have $H(\sigma_1)-H(\sigma_0)\leq 2J-(\lambda+h)$ and $\Phi (\sigma_0,\sigma_1)\leq H(\sigma_0)+18J$. For $i=2$, another plus gets in $\partial^+\Lambda_0$ and it moves along the zero-carpet until in $x_2$. By using Lemma \ref{lem:transport_zero}-(i), we have $H(\sigma_2)-H(\sigma_1)\leq 2J-(\lambda+h)$ and $\Phi (\sigma_1,\sigma_2)\leq H(\sigma_1)+18J$, note that the $2J$ in the estimate of the energy difference is not present if $x_1$ and $x_2$ are nearest neighbours.
Thus, 
\begin{align}
    \Phi(\sigma_0,\sigma_2)= \max \{\Phi (\sigma_0,\sigma_1), \Phi (\sigma_1,\sigma_2)\} \leq H(\sigma_1)+18J \leq H(\sigma_0)+22J-(\lambda+h)
\end{align}
We iterate this procedure for every $i=3,...,l-1$. We stress that from $i=3$ on each new plus spin will be accommodated at a site with at least two neighboring pluses, so that 
$H(\sigma_{i})-H(\sigma_{i-1})\leq -(\lambda+h)$ and $\Phi (\sigma_{i-1},\sigma_{i}) \leq H(\sigma_{i-1})+18J$. Thus,  
$\Phi(\sigma_{i-1},\sigma_{i}) \leq \Phi(\sigma_{i-2},\sigma_{i-1})$,
since $H(\sigma_{i})<H(\sigma_{i-1})$ for $i=1,...,l$. Then,
\begin{align}
    \Phi(\sigma_0,\sigma_{l}) &=\max \{\Phi (\sigma_0,\sigma_1), \Phi (\sigma_1,\sigma_2),..., \Phi (\sigma_{l-1},\sigma_l)\} \notag \\
    &= \max \{\Phi (\sigma_0,\sigma_1), \Phi (\sigma_1,\sigma_2)\}<H(\sigma_0)+22J.
\end{align}
Finally, we conclude
\begin{align}
    H(\sigma_l)-H(\sigma_0) \leq H(\sigma_0)+2J-l(\lambda+h) < H(\sigma_0),
\end{align}
since $l \geq \lfloor \frac{2J}{\lambda+h}\rfloor+1$.
\end{proof}

 \begin{proof}[Proof of Lemma \ref{lem:flag}]
 Let $\sigma$ be a configuration as in the assumption. Suppose that $\sigma_{\partial^+ \Lambda_0}=0_{\partial^+ \Lambda_0}$, otherwise $V_\sigma=0$, indeed when a plus or a minus leaves $\Lambda$ the energy decreases. 
 Let $l$ be the length of the strip of pluses. We distinguish two cases:
 \begin{itemize}
     \item[(i)] Consider the flag-shaped structure in the right panel of Figure \ref{fig:flag}. 
     \item[(ii)] Consider the flag-shaped structure in the left panel of Figure \ref{fig:flag}.
 \end{itemize}
 We start with the case (i). Assume first $l \geq \lfloor \frac{2J}{h}+\frac{\lambda-h}{h} \rfloor$+1.
 
 Let $x_1,...,x_l$ be the sites in the flag-shaped structured with minus spin and such that $|x_i-x_{i+1}|=1$ for every $i=1,...,l-1$.
 Let $y_1,...,y_l$ be the sites in the flag-shaped structured with zero spin and such that $|y_i-x_i|=1$ for every $i=1,...,l$.
 Let $z_1$ and $z_2$ be the two sites outside the flag-shaped structured such that $\sigma(z_1)=\sigma(z_2)=-1$, $|z_1-y_1|=1$ and $|z_2-y_l|=1$. 

First, we note that if $z_1$, $z_2$ have three nearest neighbours with minus spin, then there exists a bond $(+,-)$ and we conclude by applying Lemma \ref{lem:bond_pm_different_phase}. Then, assume that $z_1$ and $z_2$ have at most two nearest neighbours with minus spins.
Moreover, we suppose that $z_j$ and $x_i$ for every $j=1,2$ and every $i=1,...,l$ have only nearest neighbours with minus or zero spins, otherwise we conclude by applying Lemma \ref{lem:bond_pm_different_phase}.

We construct a path from $\sigma$ to $\sigma_{2l+2}$ where $\sigma_{2l+2}$ is such that
 \begin{align}
     \sigma_{2l+2}(k)= 
     \begin{cases}
         \sigma(k) & \qquad \text{ for each } k \neq x_i, y_i, z_j \text{ for every } i=1,...,l \text{ and every } j=1,2 \\
         0 & \qquad \text{ for each } k = x_i, z_j \text{ for every } i=1,...,l \text{ and every } j=1,2 \\
         +1 & \qquad \text{ for each } k = y_i \text{ for every } i=1,...,l 
     \end{cases}
 \end{align} 
Starting from $\sigma$, we apply Lemma \ref{lemma:transport_scarpet}-(i) and we obtain the configuration $\sigma_1=\sigma^{(z_1;0)}$ such that
$H(\sigma_1) \leq H(\sigma)+\lambda-h$ and $\Phi(\sigma,\sigma_1) \leq H(\sigma)+54J$. 
Then, we apply Lemma \ref{lemma:transport_scarpet}-(i) to $\sigma_1$ and we obtain the configuration $\sigma_2=\sigma_1^{(z_2;0)}$ such that
$H(\sigma_2) \leq H(\sigma_1)+\lambda-h$ and $\Phi(\sigma_1,\sigma_2) \leq H(\sigma_1)+54J$. Thus $\Phi((\sigma,\sigma_1,\sigma_2)) \leq H(\sigma)+54J+\lambda-h$.
We apply Lemma \ref{lemma:transport_scarpet}-(i) to $\sigma_2$ and we obtain the configuration $\sigma_3=\sigma_2^{(x_1;0)}$ such that
$H(\sigma_3) \leq H(\sigma_2)+2J+\lambda-h$, since $x_1$ may have three nearest neighbours equal to minus, and $\Phi(\sigma_2,\sigma_3) \leq H(\sigma_2)+54J$. Thus $\Phi((\sigma,...,\sigma_3)) \leq H(\sigma)+54J+2(\lambda-h)$.
Next, we define the configuration $\sigma_4=\sigma_3^{(y_1;+)}$ obtained by applying Lemma \ref{lem:bond_pm_different_phase}-(ii) to $\sigma_3$. We have $H(\sigma_4) \leq H(\sigma_3)+2J-(\lambda+h)$ and $\Phi(\sigma_3,\sigma_4) \leq H(\sigma_3)+35J$. Thus $\Phi((\sigma,...,\sigma_4)) \leq H(\sigma)+54J+2(\lambda-h)$, since by a direct computation we have $\Phi(\sigma_3,\sigma_4) - H(\sigma_3) \leq \Phi((\sigma,...,\sigma_3)) -H(\sigma)$.
Then, we apply Lemma \ref{lemma:transport_scarpet}-(i) to $\sigma_4$ and we obtain the configuration $\sigma_5=\sigma_4^{(x_2;0)}$ such that
$H(\sigma_5) \leq H(\sigma_4)+(\lambda-h)$, since $x_2$ may have at most two nearest neighbours equal to minus, and $$\Phi(\sigma_4,\sigma_5) \leq H(\sigma_4)+54J \leq H(\sigma_3)+56J-(\lambda+h) \leq H(\sigma)+58J+2(\lambda-2h).$$
Thus $\Phi((\sigma,...,\sigma_5)) \leq H(\sigma)+58J+2(\lambda-2h)$.
Next, we define the configuration $\sigma_6=\sigma_5^{(y_2;+)}$ obtained by applying Lemma \ref{lem:bond_pm_different_phase}-(ii) to $\sigma_5$. We have $H(\sigma_6) \leq H(\sigma_5)-(\lambda+h)$ and $\Phi(\sigma_5,\sigma_6) \leq H(\sigma_5)+35J$. Thus, $\Phi((\sigma,...,\sigma_6) \leq H(\sigma)+58J+2(\lambda-2h)$, since $\Phi(\sigma_5,\sigma_6) \leq \Phi((\sigma,\sigma_5))$.

The rest of the path is a two-step down-hill path. Indeed,
we iterate the two last steps for $i=3,...,l$ and we obtain
\begin{align}
    & H(\sigma_{2i+1}) \leq H(\sigma_i)+(\lambda-h) \label{eq:flag1} \\
    & H(\sigma_{2i+2}) \leq H(\sigma_{2i+1})-(\lambda+h) \label{eq:flag2}
\end{align}
and 
\begin{align}
    \Phi((\sigma, ..., \sigma_{2i+1}))=\Phi((\sigma, ..., \sigma_{2i+1})) 
    \leq H(\sigma)+58J+2(\lambda-2h)<58J
\end{align}
where the last inequality follows from the assumption $\lambda<2h$.

In order to conclude the proof, we show that $H(\sigma_{2l+2}) < H(\sigma)$. By \eqref{eq:flag1} and \eqref{eq:flag2}, we have 
 \begin{align}
     H(\sigma_{2l+2})& \leq H(\sigma_{2l+1})-(\lambda+h) \leq H(\sigma_{2l})+(\lambda-h)-(\lambda+h) \notag \\
     &\leq ...\leq H(\sigma_{4})+(l-1)(\lambda-h)-(l-1)(\lambda+h) \notag \\
     &\leq H(\sigma_3)+2J-l(\lambda+h)+(l-1)(\lambda-h) \notag \\
     &\leq H(\sigma_2)+4J-l(\lambda+h)+l(\lambda-h) \notag \\
     &\leq H(\sigma)+2(\lambda-h)+4J-2lh<H(\sigma),
 \end{align}
where the last inequality follows from $l \geq \lfloor \frac{2J}{h}+\frac{\lambda-h}{h} \rfloor+1$.

 Next, suppose $l \leq \lfloor \frac{2J}{h}+\frac{\lambda-h}{h} \rfloor$. Let $x_i, y_i, z_j$ be as in the previous case for $i=1,...,l$ and $j=1,2$. Let $v_1,...,v_l$ be the sites in the flag-shaped structured with plus spin and such that $|v_i-y_i|=1$ for every $i=1,...,l$. We consider a configuration $\eta$ such that
  \begin{align}
     \eta(k)= 
     \begin{cases}
         \sigma(k) & \qquad \text{ for each } k \neq y_i,v_i \text{ for every } i=1,...,l \\
         0 & \qquad \text{ for each } k = v_i \text{ for every } i=1,...,l \text{ and every } j=1,2 \\
         -1 & \qquad \text{ for each } k = y_i \text{ for every } i=1,...,l 
     \end{cases}
 \end{align} 
We construct a path from $\sigma$ to $\eta$ by replacing every plus in the flag-shaped structured with a zero and by exchanging every zero with a minus. The transport of each particle takes place through the minus-carpet as before. By arguing as above and recalling $l \leq \lfloor \frac{2J}{h}+\frac{\lambda-h}{h} \rfloor$, we obtain
$H(\eta)<H(\sigma)$ and $\Phi(\sigma, \eta)-H(\sigma) \leq 58J$.

 The cases (ii) is similar to case (i) considering the two cases $l \leq \lfloor \frac{2J}{h}+\frac{\lambda-h}{2h} \rfloor$ and $l \geq \lfloor \frac{2J}{h}+\frac{\lambda-h}{2h} \rfloor$+1.

 \end{proof}

\begin{proof}[Proof of Lemma \ref{lemma:path_0_+}]
To prove the result, we construct a path $\underline{\omega}$ from $\zero$ to $\puno$ as a sequence of configurations from $\zero$ to $\puno$ with increasing clusters \emph{as close as possible to a quasi-square}, see Figure \ref{fig:path_+_0}. 
\begin{figure}[t]
\begin{center}
    \includegraphics[scale=0.6]{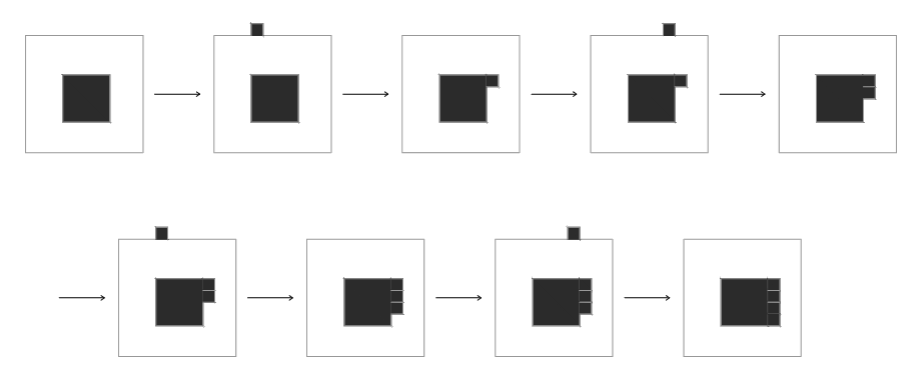}
    \caption{In the figure, the white part represents the region with zero spins, the dark gray region is the cluster of pluses. We observe that the configurations in this figure belongs to the path $\underline{\omega}$ but they are not communicating, indeed the free plus in the boundary must move in the box before to reach the cluster. }\label{fig:path_+_0}
\end{center}
    \end{figure}
We construct a path in which at each step a particle of plus gets in $\Lambda$ and then it is attached to the cluster of pluses constructed before. We observe that every time that a particle gets in $\Lambda$ the energy cost is $\Delta^p$. The first two pluses  get in $\Lambda$ 
and they possibly moves in the box until they are attached. When they are attached, a bond between plus spins is created and the energy decreases by $2J$. Then, another plus gets in $\Lambda$ and it moves towards the cluster of two particles. First the energy increases by $\Delta^p$ and then, when the plus reaches the cluster, it decreases by $2J$. The fourth plus get in $\Lambda$ with the same cost, but when it is attached to the cluster we obtain a square $2 \times 2$ and two bonds between pluses are created, so the energy decreases by $4J$. 
Next, another two particles get in $\Lambda$ and they are attached clockwise to the square, so we obtain a quasi-square $2 \times 3$ and so on. We note that when a particle is attached to the cluster the energy of the system decreases by at least $2J$. In particular, when we attach the first particle along a side of the quasi-square the energy decreases by $2J$, while the energy decreases by $4J$ when we attach the other particles along the same side. 
Next, we iterate this procedure by getting in $\Lambda$, moving and attaching a plus. In this way, we create squares and quasi-squares of pluses consequentially, until the cluster invades all the space $\Lambda_0$. In the following we compute the height of this path. First of all, we compute the energy of a configuration $\eta$ that contains a rectangle of pluses with side lengths $m$ and $n$ and only zero spins outside.
\begin{align}\label{eq:difference_energy_rectangle}
    & H(\eta)=H(\zero)+2J(m+n)-(\lambda+h)mn
\end{align}
where $2(m+n)$ is the number of bonds $(0,+)$ and $mn$ is the number of pluses in $\eta$. The equation \eqref{eq:difference_energy_rectangle} attains the maximum in $(m,n)=\Big(\frac{2J}{\lambda+h},\frac{2J}{\lambda+h}\Big)$, that corresponds to a configuration with a square of pluses with side length $\tilde n=\lfloor\frac{2J}{\lambda+h}\rfloor +1$, called $\eta_{\tilde n}$. Starting from $\eta_{\tilde n}$ to reach the configuration $\eta_{\tilde n+1}$ that contains a quasi-square with side length $\tilde n$ and $\tilde n +1$, the energy cost is equal to $6J-2(\lambda+h)$ and it is given by the first three steps with a non-null energy cost. Indeed, this value is obtained by the sum between the positive cost $\Delta^p$ of the entrance of the first plus, the negative cost $-2J$ due to the junction of this plus to the cluster and the positive cost $\Delta^p$ of the entrance of the second plus $\Delta^p$. The rest of the path is a two-step downhill path indeed, a particle moves in $\Lambda$ by increasing the energy by $\Delta^p$ and then it is attached to the cluster by decreasing the energy by $4J>\Delta^p$, see Figure \ref{fig:path_+_0}. Thus, recalling that $H(\zero)=0$, using the value of $\tilde n$ and the assumption $2h>\lambda>h$, we have
\begin{align}
& \Phi(\zero,\puno)-H(\zero) \leq \Phi(\eta_{\tilde n},\eta_{\tilde n+1})-H(\zero) =H(\eta_{\tilde n})+6J-2(\lambda+h)-H(\zero) \notag \\
&=4J\tilde n-(\lambda+h)\tilde n^2+6J-2(\lambda+h) = \frac{4J^2}{\lambda+h} +6J-2(\lambda+h)=V^*.
\end{align}
\end{proof}

\section{Conclusions}
\label{s:con}
\par\noindent
We have studied the metastable behavior of the Blume--Capel model 
with magnetic field smaller than the chemical potential
and we have been able to prove that, for this choice of the 
parameters, the minus homogeneous state is the unique 
metastable state. 
Moreover, we have studied in detail the transition 
from the metastable to the stable homogeneous plus 
state and we have provided an estimate of the 
exit time.

As we have explained above, see also the 
introductory section, the solution of the variational problems 
involved in the study of metastability is particularly 
difficult, because of the conservative 
nature of the Kawasaki dynamics and the 
interplay among the energy costs of 
different types of interfaces
(the energy cost of the minus--plus lattice bonds 
is higher than that of the zero--plus 
and zero--minus ones). 

The main problem is that, in order to minimize
the energy cost of the interacting structure of pluses and minuses, 
it is necessary to separate them by means of a thin layer of zeros. 
Thus, the transition from the metastable minus state 
to the stable plus one must happen via the formation and 
growth of a droplet of pluses surrounded by a layer of zeros.
This, associated with the swap character of the dynamics, 
is a huge problem, since, during the growth, it is not 
possible to create the necessary zeros and pluses where they are needed, 
but it is necessary to transport 
them from the boundary to these particular lattice sites
through a completely arbitrary mixture of the three 
spin species.

We have solved this problem by introducing the idea of carpet, 
namely, a nearest neighbor connected set of lattice sites with 
constant spin value, and using these structures to transport spins 
along the lattice. The key lemma on which our study is based 
is Lemma~\ref{thm:grafo_pesato} which allows us to control
this transport mechanism and to estimate the involved 
energy costs. 

For the sake of clearness, we have stated and proven this lemma 
in the context of the Blume--Capel model, but it is useful to remark that 
it is possible to state it in a more general setup, namely for multi--state spin systems. 
The generalized version of the lemma opens the way for the study of 
the metastable behavior of more general multi--state systems.

We thus wrap up this paper by stating the general version of the lemma.
Let us, then, consider the generalized Potts model with 
spin variables taking value in $\{1,...,q\}$, with $q \in \mathbb{N}$. 
Given the lattice $\Lambda$ with periodic boundary conditions, 
the configuration space is $\mathcal{X}= \{1,...,q\}^\Lambda$ 
and the Hamiltonian function is given by

\begin{align}\label{eq:H_generalize_Potts}
    H(\sigma)  =
 \sum_{\newatop{x,y \in \Lambda:}{|x-y|=1}} 
\!\!
    J(\sigma(x), \sigma(y))
 +
\sum_{x\in\Lambda}
h(\sigma(x)),
\end{align}
where $\sigma\in\mathcal{X}$ is a configuration,
$J:\{1,\dots,q\}\times\{1,\dots,q\}\to\mathbb{R}$ gives the energy cost of the 
bond with spins $\sigma(x)$ and $\sigma(y)$, and $h:\{1,\dots,q\}\to\mathbb{R}$ is the 
magnetic field. 

In the following we misuse the notation introduced in the paper for the 
Blume--Capel model and, mutatis mutandis, apply it to the generalized Potts
model.

\begin{lemma}[Weighted graph for the energy cost in $s$--carpet]
\label{thm:grafo_pesato_canale_Potts}
Let $s \in \{1,...,q\}$.
Consider a configuration 
$\sigma \in \mathcal{X}$ such that there exists 
a $s$-carpet $X$ of $\sigma$ and two nearest neighboring sites
$x \in \Lambda\setminus X$ and $x'\in X$ such that 
$\sigma(x)=r \neq s$.
Assume, also, that 
there exists $y \neq x'$ a nearest neighbor of $x$ such 
that $\sigma(y)=s$. 
Let $X'$ be the subset of $X$ obtained by collecting $x'$ together 
with all the sites of $X$ having at least two 
neighboring sites in $X$.
Then the following holds:
\begin{enumerate}
\item
for any $v\in X'\cup\{x\}$,
let
$\sigma^v=\sigma^{(x,v)}$  and recall $\sigma^{(x,x)}$ is equal to $\sigma$.
Then, for any pair of neighboring 
sites $v,w\in X'\cup\{x\}$ 
\begin{equation}
H(\sigma^v)-H(\sigma^w)
= [D_v(\sigma^v)+D_w(\sigma^v)]-[D_v(\sigma^w)+D_w(\sigma^w)] 
\end{equation}
where 
\begin{equation}
    D_x(\eta):=
    \left\{
    \begin{array}{cl}
{\displaystyle 
   \sum_{\substack{y \in \Lambda:\\ |x-y|=1}} 
J(\eta(x),\eta(y)) 
}
& 
\textup { if } x\in\Lambda_0, 
\\
 0 &\textup { if } x \in \partial^+\Lambda_0.
\end{array}
\right.
\end{equation}
\item
For any $v\in X'$ 
\begin{equation}
    \Phi(\sigma, \sigma^v)-H(\sigma) \leq \max_{v\in X'} \Big ( [D_v(\sigma^v)-D_v(\sigma)]+[D_x(\sigma^v)-D_x(\sigma)] \Big )
.
\end{equation}
\end{enumerate}
\end{lemma}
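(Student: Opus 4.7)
The plan is to reduce both parts to careful local book-keeping of the Hamiltonian, exploiting the fact that the only moving degree of freedom is the single $r$-spin that is being transported through the $s$-carpet.

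For Part 1, I would argue directly. The two configurations $\sigma^v=\sigma^{(x,v)}$ and $\sigma^w=\sigma^{(x,w)}$ agree everywhere except at the sites $v$ and $w$ (where $\sigma^v$ has spins $r,s$ and $\sigma^w$ has spins $s,r$). Hence the only contributions to $H(\sigma^v)-H(\sigma^w)$ come from the magnetic-field terms at $v,w$ and from the bonds incident to $v$ or $w$. The field contributions cancel because $h(r)+h(s)=h(s)+h(r)$. For the bond terms, one splits them into the bond $\{v,w\}$ (present only if $v\sim w$) and the remaining bonds. The $\{v,w\}$-bond contribution is the same in both configurations, since it is $J(r,s)$ in $\sigma^v$ and $J(s,r)$ in $\sigma^w$ and $J$ is symmetric. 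The other bonds at $v$ and at $w$ are exactly what $D_v$ and $D_w$ record, so their net change is $[D_v(\sigma^v)+D_w(\sigma^v)]-[D_v(\sigma^w)+D_w(\sigma^w)]$, which is the claimed formula.

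For Part 2, I would build an explicit path from $\sigma$ to $\sigma^v$ that realises the transport of the $r$-spin from $x$ to $v$ by a sequence of nearest-neighbour swaps staying inside $X'\cup\{x\}$. Concretely, pick a sequence $x=v_0,v_1,\dots,v_k=v$ in $X'\cup\{x\}$ with $v_i\sim v_{i+1}$; this is possible because $X$ is connected, $x'\in X'$ is a neighbour of $x$, and $X'$ consists of sites of $X$ with at least two neighbours in $X$, so the $r$-particle can hop through the interior of the carpet without leaving $X'$. The associated path of configurations is $\sigma,\sigma^{v_1},\dots,\sigma^{v_k}=\sigma^v$, each consecutive pair being communicating. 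Its height is $\max_{0\le i\le k}[H(\sigma^{v_i})-H(\sigma)]$, and the same book-keeping as in Part 1 (applied now to $\sigma$ and $\sigma^{v_i}$, which differ only at $x$ and $v_i$) yields
\begin{equation*}
H(\sigma^{v_i})-H(\sigma)=[D_{v_i}(\sigma^{v_i})-D_{v_i}(\sigma)]+[D_x(\sigma^{v_i})-D_x(\sigma)].
\end{equation*}
Taking the maximum over the intermediate sites, which all belong to $X'$, gives an upper bound for $\Phi(\sigma,\sigma^v)-H(\sigma)$ by the quantity on the right-hand side of the stated inequality.

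The main obstacle, and the reason the set $X'$ appears in the definition, is ensuring the existence of a connecting sequence $v_0,\ldots,v_k$ entirely in $X'\cup\{x\}$: one must check that the ``interior'' of any connected $s$-carpet (sites of $X$ having at least two neighbours in $X$) is itself nearest-neighbour connected to the entry point $x'$, since otherwise the controlled-cost path above cannot be constructed. This is the only genuinely combinatorial point; after that, Part 2 is a direct consequence of Part 1 together with the definition of the communication height $\Phi$. Finally, I would note that the proof does not use any specific feature of the Blume--Capel interaction beyond symmetry of $J$ and additivity of the Hamiltonian, which is exactly why the lemma extends verbatim from Lemma~\ref{thm:grafo_pesato} to this generalized Potts setting.
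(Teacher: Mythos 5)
Your proposal is correct and follows essentially the same route as the paper: the paper does not prove this Potts version separately, and its proof of the Blume--Capel analogue (Lemma~\ref{thm:grafo_pesato}) is exactly your local bookkeeping, merely repackaged as a weighted graph whose edge weights are differences of a potential function, so that the maximal sub-path weight along the transport path coincides with your $\max_i\,[H(\sigma^{v_i})-H(\sigma)]$. The one point you flag but do not verify --- that $X'$ is nearest-neighbour connected to the entry site $x'$ so the hopping sequence exists --- is likewise left implicit in the paper; it holds because passing from $X$ to $X'$ removes only sites with at most one neighbour in $X$, i.e.\ leaves of the adjacency graph of $X$, and deleting leaves never disconnects a connected graph.
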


\begin{figure}[t]
\centering
\raisebox{-0.5\height}{\includegraphics[width=0.45\textwidth]{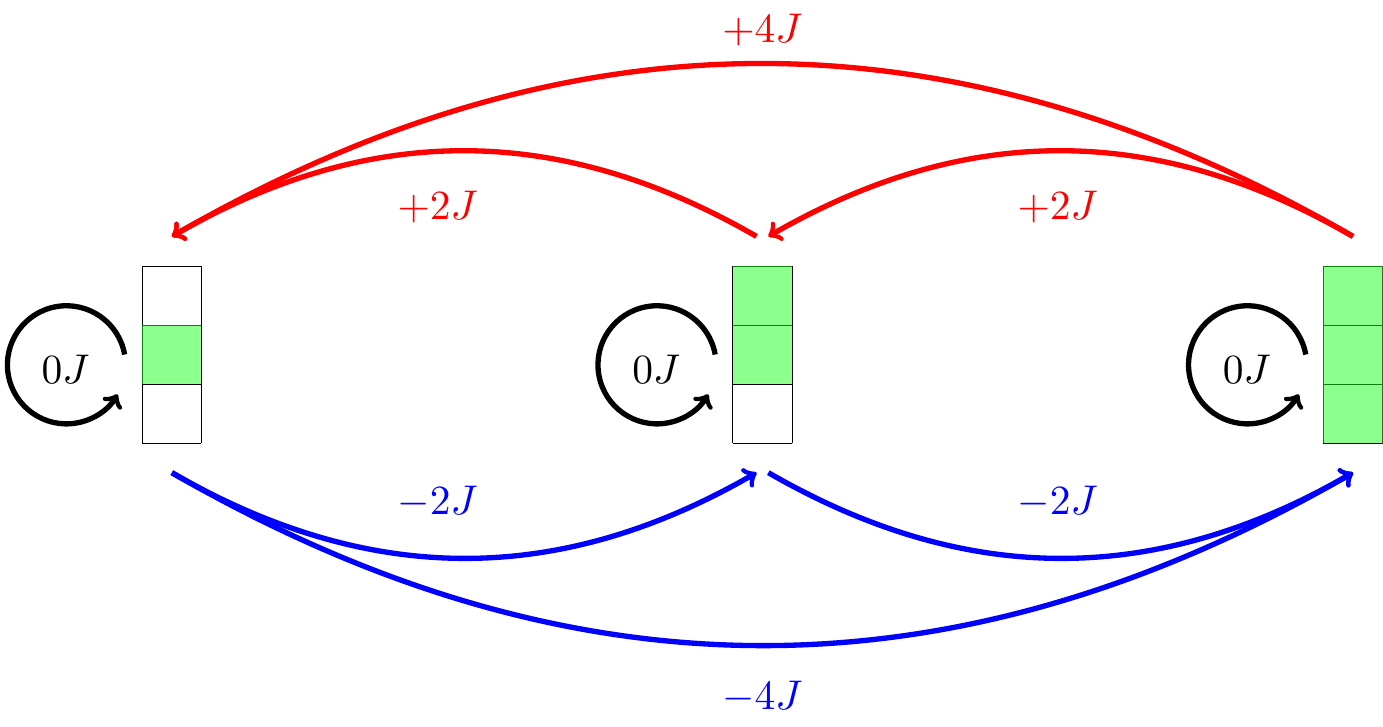}}
\caption{In each vertex of the graph 
the three squares represent three sites such that the one at the 
center is the site $x$ and it is nearest neighbor of the other two.
The local configuration is reported with the colors 
green and white representing, respectively, 
the spin values $r$, and any other spin different from $r$. 
The other two sites neighboring the middle square and not reported in the picture 
are occupied by a fixed $s\neq r$ spin, indeed, in this example, we supposed that the configuration contains a $s$-carpet. 
The vertex at the center of the graph corresponds to any 
of the two reported configurations.
Any transition 
from $\sigma^v$ to $\sigma^w$ for 
$v,w\in X'\cup\{x\}$ 
is realized via the swap of the particle
between two sites with local configuration represented by 
one of the vertices of the graph.
For each possible swap the energy difference is reported in the picture.
}
\label{figure:graph_carpet_Potts}
\end{figure}

In order to illustrate an interesting application of the 
Lemma~\ref{thm:grafo_pesato_canale_Potts} we consider the 
case of the ferromagnetic standard $q$--state Potts model and compute explicitely
the energy difference $H(\sigma^v)-H(\sigma^w)$ and the communication height $\Phi(\sigma, \sigma^v)-H(\sigma)$. In this case the interaction term is a negative constant
for equal spins and zero otherwise, i.e.,
\begin{align}\label{eq:H_standard_Potts}
        H(\sigma)  =-J
 \sum_{\newatop{x,y \in \Lambda:}{|x-y|=1}}
\!\!
    \textbf{1}_{\{\sigma(x)=\sigma(y)\}}
  +
\sum_{x\in\Lambda}
h(\sigma(x))
\end{align}
with $J>0$ and $\textbf{1}$ is the characteristic function.
Thus in this case, we have
\begin{equation}
    H(\sigma^v)-H(\sigma^w) \in \{ -4J, -2J, 0, 2J, 4J \} 
    \;\textup{ and }\;
     \Phi(\sigma, \sigma^v)-H(\sigma) \leq 4J,
\end{equation}
see Figure \ref{figure:graph_carpet_Potts} for details.

\renewcommand{\thesubsection}{\Alph{section}.\arabic{subsection}}
\renewcommand{\theequation}{\Alph{section}.\arabic{equation}}
\renewcommand{\thefigure}{\Alph{section}.\arabic{figure}}

\begin{acknowledgments}
ENMC thanks the Institute of Mathematics of the Utrecht University
for the warm hospitality.
ENMC thanks the PRIN 2022 project
``Mathematical modelling of heterogeneous systems"
(code 2022MKB7MM, CUP B53D23009360006).
VJ thanks GNAMPA.

\end{acknowledgments}

\paragraph{Data availability}
Data sharing not applicable to this article as no datasets were generated or analysed during the current study.

\paragraph{Conflict of interest}
The authors declare that they have no conflict of interest.
\bibliographystyle{plain}
\bibliography{cjs-bck_zero}
\end{document}